\newtheorem{theorem}{Theorem}
\newtheorem{lemma}{Lemma}
\newtheorem{proposition}{Proposition}
\newtheorem{definition}{Definition}
\newtheorem{remark}{Remark}
\def\R{{\mathbb R}}
\def\vol{{\rm vol}}
\def\1{\mathds{1}}
\def\be{\begin{equation}}
\def\ee{\end{equation}}
\def\Hess{{\nabla^2}}
\DeclarePairedDelimiter\norm{\lVert}{\rVert}
\newcommand{\leqnomode}{\tagsleft@true}
\newcommand{\reqnomode}{\tagsleft@false}
\let\@fnsymbol\@alph
\begin{document}

\title{\bf Weighted floating functions and\\ weighted functional affine surface areas}

\author{Carsten Sch\"utt\thanks{Universit\"at Kiel. Email: schuett@math.uni-kiel.de}, Christoph Th\"ale\thanks{Ruhr University Bochum, Germany. Email: christoph.thaele@rub.de}, Nicola Turchi\thanks{University of Milano-Bicocca. Email: nicola.turchi@unimib.it}, Elisabeth M.\ Werner\thanks{Case Western Reserve University. Email: elisabeth.werner@case.edu}}

\date{}

 \maketitle

\begin{abstract}
The purpose of this paper is to introduce the new concept of weighted floating functions associated with log concave or $s$-concave functions. This leads to new notions of weighted functional affine surface areas. Their relation to more traditional versions of functional affine surface areas as well as to the classical affine surface areas for convex bodies is discussed in detail.
\bigskip
\\
\textbf{Keywords}. {%
	Affine surface area, floating function, log concave function, s-concave function
}\\
\textbf{MSC}. 52A20, 53A15
\end{abstract}

\begin{spacing}{0}
{\footnotesize \tableofcontents}
\end{spacing}

\section {Introduction}

The affine surface area plays a central role in the affine geometry of convex bodies. For convex bodies in the plane or in $3$-dimensional space, this geometric functional has been introduced by Blaschke \cite{Blaschke:1923}. For general space dimensions, if $K\subset\R^n$ is a convex body with boundary of differentiability class $C^2$, its affine surface area is defined as
$$
\operatorname{as}(K) = \int_{\partial K}\kappa_K(z)^{1\over n+1}\,d\mu_K(z),
$$
where $\mu_K$ stands for the surface measure on the boundary $\partial K$ of $K$ and $\kappa_K(z)$ for the Gaussian curvature at a boundary point $z$. We remark that the definition can be extended to all convex bodies, see, e.g.,  \cite{Leichtweiss, SchuettWerner1990, Werner94}. The affine surface area has a number of important properties:
\begin{itemize}
\item[(i)] it is equiaffine invariant in the sense that $\operatorname{as}(\varrho(K))=\operatorname{as}(K)$ for any volume preserving affine transformation $\varrho:\R^n\to\R^n$;
\item[(ii)] it is upper semi-continuous, which means that $\operatorname{as}(K)\geq\limsup\limits_{n\to\infty}\operatorname{as}(K_n)$ for any sequence of convex bodies $K_n$, which converge in Hausdorff distance to a convex body $K\subset\R^n$;
\item[(iii)] it is a valuation, that is, $\operatorname{as}(K\cup L)=\operatorname{as}(K)+\operatorname{as}(L)-\operatorname{as}(K\cap L)$ for any two convex bodies $K,L\subset\R^n$ for which also the union $K\cup L$ is a convex body.
\end{itemize}
The point (i) goes back to Blaschke, (ii) was proved by Lutwak \cite{Lutwak1991} and (iii) by Sch\"utt \cite{Schutt1993}.
It was shown in \cite{LudwigReitzner99} that -- aside from the Euler characteristic and the volume -- the affine surface area is up to constants the only real-valued functional on the space of convex bodies satisfying these three properties. Its affine invariance also explains the central role of the affine surface area in the theory of affine inequalities. For example, the affine isoperimetric inequality says that 
$$
\Big({\operatorname{as}(K)\over\operatorname{as}(B_2^n)}\Big)^{1\over n-1} \leq \Big({\vol_n(K)\over\vol_{n}(B_2^n)}\Big)^{1\over n+1}
$$
for all convex bodies $K\subset\R^n$ with centroid at the origin and where $B_2^n$ is the $n$-dimensional Euclidean unit ball. Equality holds in this inequality if and only if $K$ is an ellipsoid.

During the last decades there has been a growing interest in extending geometric inequalities for convex bodies to inequalities for functions, which typically reduce to their geometric counterparts if appropriate indicator or gauge functions are chosen. A prominent example in this direction is the Pr\'ek\'opa-Leindler inequality, which can be regarded as the functional version of the classical Brunn-Minkowski inequality for convex bodies. Also the affine isoperimetric inequality has a functional counterpart, which has been developed in \cite{AAKSW}. Namely, if $\psi:\R^n\to\R$ is a convex function with $\int_{\R^n}xe^{-\psi(x)}\,dx=0$ one defines its affine surface area by
$$
\operatorname{as}(\psi) = \int_{\R^n}e^{-(\psi(x)+\langle x,\nabla\psi(x)\rangle)}\,\det\nabla^2\psi(x)\,dx
$$
where $\langle\,\cdot\,,\,\cdot\rangle$ is the standard scalar product on $\R^n$ and $\nabla^2\psi$ the Hessian of $\psi$. Then the functional affine isoperimetric inequality says that
$$
{\operatorname{as}(\psi)\over\operatorname{as}(\psi_{\|\,\cdot\,\|})} \leq {\int_{\R^n}e^{-\psi_{\|\,\cdot\,\|}(x)}\,dx\over\int_{\R^n}e^{-\psi(x)}\,dx}\qquad\text{with}\qquad \psi_{\|\,\cdot\,\|}(x)={\|x\|^2\over 2}
$$
and where $\|\,\cdot\,\|$ is the Euclidean norm on  $\R^n$. We remark that equality holds if and only if $\psi(x)=\langle Ax,x\rangle+b$ for some positive definite matrix $A$ and a scalar $b\in\R$.

The affine surface area of a convex body $K\subset\R^n$ is closely related to what is known as the floating body $K_\delta$. For sufficiently small $\delta>0$ the latter is the intersection of all halfspaces which cut off caps of volume at most $\delta$ from $K$, see \eqref{schwimm} below. It was shown in \cite{SchuettWerner1990} that
\begin{equation}\label{eq:5-2-24B}
\lim_{\delta\to 0}{\vol_n(K)-\vol_n(K_\delta)\over\delta^{2\over n+1}} = {1\over 2}\Big({n+1\over\vol_{n-1}(B_2^{n-1})}\Big)^{2\over n+1}\,\operatorname{as}(K).
\end{equation}
It is this result together with the observation that random polytopes are well approximated by suitable floating bodies that makes the affine surface area appear in problems of stochastic geometry and the theory of approximation of convex bodies by polytopes. More precisely, for a convex body $K\subset\R^n$, $N\geq n+1$ and a sequence of independent and uniformly distributed random points $X_1,X_2,\ldots$ in $K$ one considers the random convex hull $[K]_N={\rm conv}\{X_1,\ldots,X_N\}$. Then it turns out that $[K]_N$ roughly behaves like the floating body $K_{1/N}$ and that
\begin{equation}\label{random}
\lim_{N\to\infty}{\vol_n(K)-\mathbb{E}\vol_n([K]_N)\over({\vol_n(K)\over N})^{2\over n+1}} = {1\over 2}\Big({n+1\over\vol_{n-1}(B_2^{n-1})}\Big)^{2\over n+1}\,\operatorname{as}(K),see
\end{equation}
\cite{Barany:1992, Schuett1994}. 
This connection has also been the starting point of some more recent developments in the theory of random polytopes in spherical and hyperbolic space, see \cite{BesauRosenThaele,BesauThaele}. However, to deal with such situations, an additional weight function is required. This has motivated the study of weighted floating bodies in \cite{BLW:2016, Werner2002}. The purpose of the present paper is to extend the notion of weighted floating bodies to a functional level, extending thereby the earlier work \cite{LiSchuettWerner} on (unweighted) floating functions. We do this in two closely related set-ups by considering
\begin{itemize}
\item[(i)]convex or log concave functions;
\item[(ii)] $s$-concave functions.
\end{itemize}
The concept of  weighted floating functions in the first case is introduced by considering what we call weighted floating sets of the (unbounded) epigraphs of these functions. This leads to the new notion of weighted functional affine surface areas for convex or log concave functions.  In contrast, we use the by now classical concept of weighted floating bodies from \cite{BLW:2016,Werner2002} to associate floating functions to $s$-concave functions by means of an auxiliary convex body that is connected with such a function. The latter has been introduced in \cite{KlartagMarginals} for the study of geometric inequalities. This approach leads to the new notion of weighted functional affine surface areas for $s$-concave functions. The main motivation for calling the new quantities  affine surface areas is that we are able to prove analogues to \eqref{eq:5-2-24B} for each of the functionals we consider. This is the content of our main results, Theorems \ref{theo:f-deltafloat2}, \ref{theo:f-deltafloat1} and \ref{thm:sconcave} below.

The remaining parts of this text are structured as follows. After setting up some conventions and notation in Section \ref{sec:Conventions} we introduce weighted floating sets and weighted floating functions in Section \ref{WFS}. We do this separately for (i) convex or log concave function as well as for (ii) $s$-concave functions. The main theorems of this paper are the content of Section \ref{sec:MainResults}, where we again separate the cases (i) and (ii). All proofs are collected in the final Section \ref{proofs}.

\section{Conventions on notation}\label{sec:Conventions}

Throughout the paper we will use the following notation. We will be working in  $n$-dimensional Euclidean space $\R^n$ with scalar product $\langle\,\cdot\,,\,\cdot\,\rangle$ and norm $\|\,\cdot\,\|$. The $n$-dimensional Lebesgue measure will be indicated by $\vol_{n}(\,\cdot\,)$.
We denote by $B^{n}_2(x,r)$ the $n$-dimensional closed Euclidean  ball centered at $x\in\R^n$ with radius $r>0$. We write in short $B^{n}_2=B^{n}_2(0,1)$ for the  Euclidean unit ball centered at the origin $0$ and $S^{n-1} = \partial B^n_2$ for the $(n-1)$-dimensional unit sphere in $\R^n$.
A convex body $K$ in $\mathbb{R}^n$ is a compact convex subset of $\mathbb{R}^n$ with non-empty interior  $\text{int}(K)$. We denote  by  $\partial K$  the boundary  of $K$ and by $\operatorname{int}(K)$ its interior.
 The hyperplane passing through a point $x\in \mathbb{R}^n$ and which is orthogonal to a vector $u \in S^{n-1}$ is  denoted by $H(x, u)$. We write $H$ in short if there is no confusion and then 
 $H^+$ and $H^-$ stand for the two closed halfspaces determined by $H$. Finally,  $c, \alpha_1, \alpha_2,\ldots,\beta_1,\beta_2,\ldots\in(0,\infty)$ denote absolute constants that may change from line to line.

\section{Weighted floating sets and functions} \label{WFS}
\subsection{Weighted floating sets}

The concept of the so-called floating body is very classical in the affine geometry of convex bodies and goes back to Dupin and Blaschke in dimension $n=2$ and $n=3$, see \cite{Blaschke:1923}. We start by recalling the general definition, which was given independently by B\'ar\'any and Larman \cite{BaranyLarman:1988} and Sch\"utt and Werner \cite{ SchuettWerner1990}. For a unit vector $u \in S^{n-1}$ and a point $x \in \mathbb{R}^n$ we denote by  $H(x, u)$, or simply by $H$,  the  hyperplane through $x$ and orthogonal to $u$, that is
 $$H=H(x,u) =\{y \in \mathbb{R}^n: \langle y, u\rangle= \langle x, u\rangle  \}.$$ 
Then, setting $\langle x, u\rangle  = a$,   $H^+ =\{y \in \mathbb{R}^n: \langle y, u \rangle \geq  a\}$ and $H^-=\{y \in \mathbb{R}^n: \langle y, u \rangle  \leq  a\}$ are  the two closed half spaces determined by $H$. Now, let $K$ be a convex body in $\mathbb{R}^n$ and $\delta \geq 0$.   
The {\em (convex) floating body }
$K_{\delta}$  of $K$ is
the intersection of all halfspaces $H^+$ whose
bounding hyperplanes $H$ cut off  sets of volume at most $\delta$
from $K$. Formally ,
\begin{equation} \label{schwimm}
K_{\delta}=\bigcap\Big\{H^+:\vol_n(H^-\cap K) \leq \delta\Big\},
\end{equation}
see \cite{BaranyLarman:1988, SchuettWerner1990}. 
The floating body exists, i.e.,  is non-empty,  if $\delta$ is small enough. Moreover,  $K_0=K$ and $K_{\delta}\subseteq K$,  for all $\delta \geq 0$.

We mention  that floating bodies have tight connections to random polytopes. In fact, it turns out that the convex hull $[K]_N$ of $N\geq n+1$ independent and uniformly distributed random points in $K$ is `close' to the floating body $K_{1/N}$ for large $N$ in a sense explained in detail in the survey article \cite{ReitznerSurvey}. More recently, in  \cite{BLW:2016,BesauRosenThaele,BesauThaele} it was observed that random polytopes in spherical or hyperbolic space are also connected to the theory of floating bodies. In this case, however, one needs to replace the $n$-dimensional volume in \eqref{schwimm} by a weighted volume. This leads to the definition of \textit{weighted floating bodies}, a concept that has been introduced in \cite{Werner2002} (see also \cite{BesauWerner2}). Again, we let $K\subset\R^n$ be a convex body and $\delta\geq 0$. In addition, we let $\Phi:  \mathbb{R}^{n} \to \mathbb{R}_+$ be an integrable function.  Then the weighted floating body $K^\Phi_\delta$ (with respect to the weight function $\Phi$) is defined as
\begin{equation} \label{weighted-schwimm}
K^\Phi_\delta=  \bigcap \left\{ H^+ :  \int _{H^-\cap K} \Phi(x) \, dx \leq \delta \right\}.
\end{equation}
To extend the concept of weighted floating bodies from convex bodies to a functional level, it is necessary to extend the definition of weighted floating bodies to unbounded closed convex sets $C\subset\R^n$ having non-empty interior. For $x\in\partial C$ we denote by $N_C(x)$ the set of all outer unit normal vectors at $x$. We remark that by a theorem of Rademacher \cite{Rademacher} that $N_C(x)$  consists of a single vector at $ \mu_C$-a.e.\ points $x$ on $ C$, where $ \mu_C$ stands for the surface measure on $\partial C$. Then, for a continuous function $\Phi: \mathbb{R}^{n} \to \mathbb{R}_+$ and $\delta\geq 0$ we put
\begin{align*}
	C^\Phi_{\delta}= \bigcap \left\{H\left(x-t N_C(x), N_C(x)\right)^+ :  \int _{H(x -t N_C(x), N_C(x))^-\cap C} \Phi(x) \, dx \leq \delta \right\}.
\end{align*}
It is clear that $C^\Phi_\delta $ is a closed convex subset of $C$. If   $\Phi \equiv 1$, the definition coincides with the one introduced  in \cite{LiSchuettWerner}.
In that case we write simply write $C_\delta$ instead of $C^{\Phi\equiv 1}_\delta$.
While for a convex body $K$, $K_\delta$ is a proper subset of $K$ if $\delta >0$, it is now possible that $C_\delta=C$ for  $\delta >0$. This happens, for example, if $C$ is a halfspace.

\begin{remark}
As anticipated above, we will consider the definition of weighted floating sets in a functional setting,  and therefore we want to obtain an unbounded floating set. Thus, defining the floating set as $C^\Phi_\delta=  \bigcap \left\{ H^+ :  \int _{H^-\cap K} \Phi(x) \, dx \leq \delta \right\}$ for a integrable function $\Phi: \mathbb{R}^{n} \to \mathbb{R}_+$ may lead to a bounded $C^\Phi_\delta$,
which is not desirable in our context.
\end{remark}

\subsection{Convex and log concave functions and their floating functions}

Let $\psi: \R^n \rightarrow \R $
 be a  convex function.   We always consider in this paper convex  functions 
$\psi$ satisfying the assumption that  $0 <  \int _{\mathbb R^{n}} e^{-\psi(x)} dx < \infty$. 
We denote the set of such functions by $\text{Con}(\mathbb{R}^n)$.  We also recall that a function 
$f: \mathbb{R}^n \rightarrow \mathbb{R}_+$ is   log concave, if it is of the form $f= \exp(-\psi )$ for some convex function  $\psi: \R^n \rightarrow\R $.

In the general case, when $\psi$ is neither  smooth nor strictly convex, 
the gradient of $\psi$, denoted  by $\nabla \psi$, exists almost everywhere by Rademacher's theorem \cite{Rademacher}. Furthermore,  a theorem of Alexandrov \cite{Alexandroff} and Busemann and Feller \cite{Buse-Feller} guarantees the existence of the (generalized) Hessian, denoted  by
$\nabla^2 \psi$, almost everywhere in $\mathbb R^{n}$. The Hessian  is a quadratic form on $\mathbb{R}^n$, and if $\psi$
is a convex function, a Taylor formula  as in the $C^2$ case  holds for almost every $x \in \mathbb{R}^n$ (see \cite{SchuettWernerBook}). Formally, we have that 
$$
\psi( x + y) = \psi (x) + \langle \nabla   \psi(x),  y  \rangle + \frac{1}{2}  \langle \nabla^2 \psi(x) (y), y \rangle + o( \|y\|^2), 
$$
when $y \to 0$.

 Let $\psi$ be in $\text{Con}(\mathbb{R}^n)$ and denote by  
$$
\operatorname{epi}(\psi) = \{ z=(x,y) \in \mathbb{R}^n \times \mathbb{R}:   y \geq \psi(x)\}
$$
the  epigraph of $\psi$.
Then $\operatorname{epi}(\psi)$  is a  closed  convex subset of $\mathbb{R}^{n+1}$. We denote for short $N_\psi(z)= N_{\operatorname{epi}(\psi)}(z)$ for $z\in\R^{n+1}$. By the  discussion of the previous section,  for sufficiently small $\delta$ and a continuous weight function  $\Phi: \mathbb{R}^{n+1} \to \mathbb{R}_+$, the floating sets $\operatorname{epi}(\psi)^\Phi_\delta$ are given by
\begin{equation*}
\operatorname{epi}(\psi)^\Phi_\delta = 
 \bigcap \Big\{H\left(x-t \, N_{\psi}(z), N_{\psi}(z)\right)^+ :  \int _{H(x -t \, N_{\psi}(z), N_{\psi}(z))^-\, \cap\,  \operatorname{epi}(\psi)} \Phi(z) \, dz \leq \delta \Big\}.
\end{equation*}
 It is easy to see that there exists a unique convex function $\psi^\Phi_\delta: \mathbb{R}^n \rightarrow \mathbb{R}$ 
such that $(\operatorname{epi}(\psi))^\Phi_\delta =\operatorname{epi}(\psi^\Phi_\delta)$.  
This leads to the definitions of weighted  floating functions  for convex and  log concave  functions.

\begin{definition} 
Let $\psi$ be in $\text{Con}(\mathbb{R}^n)$, $\delta >0$ and  $\Phi: \mathbb{R}^{n+1} \to \mathbb{R}_+$ be a continuous function.
\begin{itemize}
\item[(i)] The  weighted floating function  of $\psi$ is defined to be the function $\psi^\Phi_\delta$ satisfying
\begin{equation}\label{flotfunct}
	(\operatorname{epi}(\psi))^\Phi_\delta = \operatorname{epi}\left(\psi^\Phi_\delta\right).
\end{equation} 

\item[(ii)] Let $f(x)= \exp(-\psi (x) )$ be a log concave function. The weighted  floating function $f^\Phi_\delta$ of $f$ is defined as
\begin{equation}\label{flotlog}
	f^\Phi_\delta (x) = \exp\left(-\psi^\Phi_\delta (x) \right).
\end{equation}
\end{itemize}
\end{definition}
\par
 
For a log concave function $f$, a natural weight  function is the exponential weight function $\Phi(z)=\Phi_e(z)=\Phi((x,s)) =e^{-s}$. We then write in short $\psi_\delta^e$ and $f_\delta^e$, where we identify a point $z\in\R^{n+1}$ with a pair $(x,s)\in\R^n\times\R$, representing a spatial and a height coordinate of $z$.
When $\Phi \equiv 1$, we  write $\psi_\delta$ and $f_\delta$. Note also that when $\psi$ is affine,   $\psi_\delta=\psi$ and for $f=e^{-\psi}$ we have that $f_\delta=f$.

The next proposition collects  properties   of the floating set $\operatorname{epi}(\psi)^\Phi_\delta$ which we will need later. The proofs follow   from the ones given for the floating body in \cite{SchuettWerner1994}
and \cite{SchuettWernerBook} by truncating  the unbounded convex set $\operatorname{epi}(\psi)$ appropriately, applying the results for convex bodies and then and by a limiting argument. To present the result,
for $A \subset \mathbb{R}^n$, we call the point $$g^\Phi_A=\frac{\int_A x\,  \Phi(x) \, dx}{\int_A   \Phi(x) \, dx}$$ the $\Phi$-barycenter of $A$. 

\begin{proposition}\label{properties}
Let $\psi\in\text{Con}(\mathbb{R}^n)$ and  $\Phi: \mathbb{R}^{n} \to \mathbb{R}_+$ be a continuous function.
\begin{itemize}
\item[(i)] For all \(\delta\) such that \( \operatorname{epi}(\psi)^\Phi_\delta \neq \emptyset \) and all \( z_\delta\in \partial(\operatorname{epi}(\psi)^\Phi_\delta) \, \cap\operatorname{int} (\operatorname{epi}(\psi))\) there exists a support hyperplane \(H\) at \( z_\delta\) to \( \operatorname{epi}(\psi)^\Phi_\delta \) such that \(\delta= \int _{\operatorname{epi}(\psi)\cap H^-} \Phi(z) dz\).

\item[(ii)] A supporting hyperplane $H$ of $\operatorname{epi}(\psi)^\Phi_\delta$ that cuts of a set of $\Phi$-volume $\delta$ from $\operatorname{epi}(\psi)$ touches 
$\operatorname{epi}(\psi)^\Phi_\delta$ in exactly one point, the $\Phi$-barycenter of $\operatorname{epi}(\psi)\cap H$. 

\item[(iii)] The set $\operatorname{epi}(\psi)^\Phi_\delta$ is strictly convex.
\end{itemize}
\end{proposition}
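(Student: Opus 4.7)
The plan is to reduce all three statements to the already known analogues for compact convex bodies, proved in \cite{SchuettWerner1994, SchuettWernerBook}, by means of the truncation-and-limit procedure indicated in the text preceding the proposition. For every $R>0$ large enough that the intersection has non-empty interior, set $K_R := \operatorname{epi}(\psi)\cap B_2^{n+1}(0,R)$, which is a convex body in $\R^{n+1}$. Since $\Phi$ is continuous and $K_R$ compact, the weighted floating body $(K_R)^\Phi_\delta$ is well defined and satisfies the convex-body versions of (i)--(iii). The goal is then to compare $(K_R)^\Phi_\delta$ to $\operatorname{epi}(\psi)^\Phi_\delta$ near a given boundary point and let $R\to\infty$.

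For (i), fix $z_\delta\in\partial(\operatorname{epi}(\psi)^\Phi_\delta)\cap\operatorname{int}(\operatorname{epi}(\psi))$ and choose $R$ so large that $z_\delta$ lies well inside $B_2^{n+1}(0,R)$. The key localization observation is that any candidate hyperplane $H$ supporting $\operatorname{epi}(\psi)^\Phi_\delta$ near $z_\delta$ with $\int_{H^-\cap\operatorname{epi}(\psi)}\Phi\,dz\le\delta$ must cut off a bounded cap from the epigraph: the relevant outer normals at $z_\delta$ have a non-zero component in the direction of the last (vertical) coordinate, so the trace $H\cap\operatorname{epi}(\psi)$ is automatically bounded thanks to the coercivity of $\psi$ forced by $\int_{\R^n} e^{-\psi(x)}\,dx<\infty$. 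Consequently, for $R$ large (depending on $z_\delta$ and $\delta$) these caps sit inside $B_2^{n+1}(0,R)$, so that $\int_{H^-\cap K_R}\Phi\,dz=\int_{H^-\cap\operatorname{epi}(\psi)}\Phi\,dz$ and the two floating sets coincide in a neighborhood of $z_\delta$. Applying the convex-body version of (i) to $K_R$ then yields a support hyperplane $H$ at $z_\delta$ with the exact integral condition stated in (i). Part (ii) follows from the very same localization: if $H$ is a supporting hyperplane of $\operatorname{epi}(\psi)^\Phi_\delta$ cutting off $\Phi$-volume $\delta$, then $H\cap\operatorname{epi}(\psi)=H\cap K_R$ for large $R$, so the $\Phi$-barycenters of these intersections coincide and the uniqueness of the contact point transfers from the compact setting. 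Part (iii) is then a formal consequence of (ii): a face of positive dimension in $\partial\operatorname{epi}(\psi)^\Phi_\delta$ would yield two distinct contact points of a single supporting hyperplane cutting off $\Phi$-volume $\delta$, contradicting (ii).

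The main obstacle is making the localization step fully rigorous and uniform. Concretely, one has to exhibit, for each $z_\delta$, a radius $R_0=R_0(z_\delta,\delta,\Phi,\psi)$ such that every hyperplane supporting the floating set at (or near) $z_\delta$ and cutting off $\Phi$-volume at most $\delta$ from $\operatorname{epi}(\psi)$ has its whole cut-off cap inside $B_2^{n+1}(0,R_0)$. This amounts to a uniform diameter bound over the compact set of admissible outer unit normals at $z_\delta$, and requires combining the growth of $\psi$ at infinity (from $\int e^{-\psi}<\infty$) with continuity of $\Phi$ and dominated convergence to pass to the limit $R\to\infty$. Once this localization is in hand, the three assertions drop out of the cited compact statements without further work.
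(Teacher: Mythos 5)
Your proposal takes essentially the same route as the paper: the paper itself offers no detailed proof of Proposition~\ref{properties}, only the remark that it follows from the convex-body results of \cite{SchuettWerner1994, SchuettWernerBook} "by truncating the unbounded convex set $\operatorname{epi}(\psi)$ appropriately, applying the results for convex bodies and then \ldots by a limiting argument." Your truncation by $B_2^{n+1}(0,R)$, the localization of caps using the coercivity of $\psi$ (which follows from $\int e^{-\psi}<\infty$) and the non-horizontality of the admissible normals, and the reduction of (iii) to (ii), are precisely the intended argument; the gap you flag (a uniform $R_0$ over the admissible normals at $z_\delta$) is one the paper also leaves implicit.
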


\begin{remark}
The proposition also holds for a bounded strictly convex subset $C \subset \mathbb{R}^n$.
\end{remark}
 
\subsection{$s$-concave functions and their floating functions}

The purpose of this section is to introduce another notion of weighted floating functions, which is essentially based on the classical notion of a weighted floating body. It works for functions which satisfy stronger concavity assumptions in comparison to the log concavity property we assumed in the previous section. To introduce the necessary concepts, let \(n,s\in \mathbb{N}\). A function \(f\colon\R^n\to\mathbb{R}_+\) is called \emph{s-concave} if its support \(\mathrm{supp}(f)\) is a compact convex subset of $\R^n$ with non-empty interior and if \(f^{1/s}\) is concave on its support. Following \cite{KlartagMarginals}, to such a function \(f\) we associate the set
\begin{equation}\label{eq:Kfs}
K_f^s\coloneqq\{(x,y)\in\R^n\times\R^s:x\in\mathrm{supp}(f),\norm{y}\le f^{1/s}(x)\},
\end{equation}
where \(\|\,\cdot\,\|\)  the Euclidean norm on \(\R^s\). Note that \(K_f^s\) is a convex body of revolution and its \((n+s)\text{-dimensional}\) volume \(\vol(K_f^s)\) is related to the integral \(\int_{\R^n}f(x)\, dx\) via the identity
\[
\vol_{n+s}(K_f^s)=\int_{\R^n}\vol_s(B_2^s) \bigl(f^{1/s}(x)\bigr)^s\, d x=\vol_s(B_2^s)\int_{\R^n}f(x)\, d x.
\]
Let \(\Phi\colon \R^{n+s}\supset K_f^{s}\to\mathbb{R}_+\) be a continuous function such that \(\Phi(x,y)=\phi(x,\norm{y})\) for some \(\phi\colon \R^{n+1}\to\R_+\). 

For \(\delta>0\) we denote by \(K_f^s(\Phi,\delta)\) the \((\Phi,\delta)-\)weighted floating body of \(K_f^s\), which is indeed a convex body if \(\delta\) is sufficiently small, see \cite{BLW:2016,Werner2002}. Moreover, since \(K_f^s\) is a body of revolution and thanks to the rotational symmetry of \(\Phi\) in the last $s$ coordinates, the set of all hyperplanes that cut a cap \(C\) off from \(K_f^s\), such that \(\int_C \Phi(x)\,dx=\delta\), is invariant under transformations of the form \(\R^n\times \R^s\ni (x,y)\mapsto(x,Ry)\) with \(R\in \mathbb{O}(s)\), the group of orthogonal transformations acting in the $\R^s$-coordinate. As a consequence, \(K_f^s(\Phi,\delta)\), being the intersection of half-spaces determined by such hyperplanes, is a convex body of revolution itself.

By Brunn's concavity principle there exists a concave function \(g^\Phi_\delta \colon\R^n\to\mathbb{R}_+\) such that
\[
K_f^s(\Phi,\delta)=\{(x,y)\in\R^n\times\R^s:x\in\mathrm{supp}(g^\Phi_\delta),\norm{y}\le g^\Phi_\delta (x)\},
\]
where \( \mathrm{supp}(g^\Phi_\delta)\coloneqq\mathrm{supp}(f)\cap K_f^s(\phi,\delta)\). Indeed, let \(e_1,\ldots, e_n,e_{n+1},\ldots, e_{n+s}\) be the standard orthonormal basis of \(\R^n\times \R^s\) and \(L_s\coloneqq\mathrm{span}\{e_{n+1},\ldots, e_{n+s}\}\) be the $s$-dimensional linear subspace generated by the last $s$ unit vectors. Then the function \(r^\Phi_\delta \colon\R^n\to \mathbb{R}_+\) defined by
\[
r^\Phi_\delta (x)\coloneqq \vol_s(K_f^s(\Phi,\delta)\cap(x+L_s))^{1/s}
\]
is concave on its support according to \cite[Theorem 1.2.1]{AGAVol1} and we can choose
\[
g^\Phi_\delta (x)\coloneqq \vol_s(B_2^s)^{-1/s} r^\Phi_\delta (x).
\]
As a consequence, the function \(f^\Phi_\delta(x)\coloneqq g^\Phi_\delta(x)^s\) is  \(s\text{-concave}\).

\begin{definition}
Let $\delta>0$ and \(\Phi\colon \R^{n+s}\supset K_f^{s}\to\mathbb{R}_+\) be a continuous function such that \(\Phi(x,y)=\phi(x,\norm{y})\) for some \(\phi\colon \R^{n+1}\to\mathbb{R}_+\). For an $s$-concave function $f:\R^n\to\mathbb{R}_+$ we call $f^\Phi_\delta$ the $s$-concave $\Phi$-weighted floating function of $f$.
\end{definition}

\section {Main Theorems and their consequences}\label{sec:MainResults}

\subsection{The case of log concave functions}\label{sec:ConvexLogConcaveResults}

We start by presenting our main results for log concave functions. Throughout the paper we put
\begin{equation}\label{cn+1}
c_{n+1} =  \frac{1}{2} \left(\frac{n+2}{\vol_{n} (B^{n}_2)}\right)^\frac{2}{n+2}
\end{equation}
and can now formulate our first result. It deals with weight functions which are uniformly bounded away from zero.
 
\begin{theorem}\label{theo:f-deltafloat2}
 Let $\psi$ be in $\text{Con}(\mathbb{R}^n)$ and $f(x)= e^{-\psi(x)}$.  Let $\eta >0$ and 
let $\Phi: \mathbb{R}^{n+1} \to [\eta, \infty)$ be a continuous function.
Then,
$$  
\lim _{\delta \rightarrow 0} \frac{ \int _{\mathbb R^{n}}(f(x)  - f_\delta ^\Phi)(x)  \  dx } {\delta^{2/(n+2)}} =  c_{n+1} \int_{\mathbb R^{n}} \left(\det\left(\nabla^2 \psi (x) \right)\right)^\frac{1}{n+2} \  e^{-\psi(x)} \, \Phi((x, \psi(x)))^{-\frac{2}{n+2}} dx.
$$
\end{theorem}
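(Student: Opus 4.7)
The plan is to rewrite the left-hand side as a weighted volume difference in \(\R^{n+1}\), perform a local cap-volume analysis at almost every point of \(\partial\operatorname{epi}(\psi)\), and conclude by dominated convergence. By Fubini's theorem,
\begin{equation*}
\int_{\R^n}\bigl(f(x)-f_\delta^\Phi(x)\bigr)\,dx=\int_{\R^n}\bigl(e^{-\psi(x)}-e^{-\psi_\delta^\Phi(x)}\bigr)\,dx=\int_{\operatorname{epi}(\psi)\setminus\operatorname{epi}(\psi_\delta^\Phi)}e^{-s}\,d(x,s),
\end{equation*}
so the quantity of interest is the \(e^{-s}\)-weighted volume of the slab between the two epigraphs. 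The integrability of \(e^{-\psi}\) forces the sublevel sets \(\{\psi\le M\}\) to be compact, hence \(\operatorname{epi}(\psi)\cap\{s\le M\}\) is a bounded convex body for every \(M\), and the exponential weight makes the contribution of heights \(s>M\) negligible as \(M\to\infty\); via this truncation the classical techniques for weighted floating bodies of compact convex bodies from \cite{BLW:2016,SchuettWerner1994,SchuettWernerBook,Werner2002} can be transferred to the unbounded epigraph.

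By the Alexandrov--Busemann--Feller theorem, at almost every \(x_0\in\R^n\) the function \(\psi\) is twice differentiable with positive-definite Hessian. Write \(z_0=(x_0,\psi(x_0))\). The cap of \(\operatorname{epi}(\psi)\) cut off by the hyperplane parallel to the tangent plane at vertical displacement \(\tau>0\) is
\begin{equation*}
C_\tau(x_0)=\bigl\{(x,s):\psi(x)\le s\le\psi(x_0)+\langle\nabla\psi(x_0),x-x_0\rangle+\tau\bigr\}.
\end{equation*}
Substituting \(y=x-x_0\), \(r=s-\psi(x_0)-\langle\nabla\psi(x_0),y\rangle\) and using the Alexandrov Taylor formula together with the identity \(\vol_n\{y:\langle\nabla^2\psi(x_0)y,y\rangle\le 2r\}=\vol_n(B_2^n)(2r)^{n/2}(\det\nabla^2\psi(x_0))^{-1/2}\) gives
\begin{equation*}
\vol_{n+1}(C_\tau(x_0))=\frac{2^{n/2+1}\vol_n(B_2^n)}{(n+2)\sqrt{\det\nabla^2\psi(x_0)}}\,\tau^{(n+2)/2}+o(\tau^{(n+2)/2}).
\end{equation*}
By continuity of \(\Phi\) and since \(C_\tau(x_0)\) shrinks to \(z_0\), its \(\Phi\)-volume is asymptotically \(\Phi(z_0)\) times the above. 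By Proposition \ref{properties}, the supporting hyperplane of \(\operatorname{epi}(\psi_\delta^\Phi)\) parallel to the tangent plane at \(z_0\) cuts off a \(\Phi\)-cap of volume exactly \(\delta\); setting the \(\Phi\)-volume equal to \(\delta\) and solving yields the cap depth
\begin{equation*}
\tau(x_0,\delta)=\bigl(\tfrac{n+2}{2^{n/2+1}\vol_n(B_2^n)}\bigr)^{2/(n+2)}(\det\nabla^2\psi(x_0))^{1/(n+2)}\Phi(z_0)^{-2/(n+2)}\delta^{2/(n+2)}(1+o(1)).
\end{equation*}

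Next, I would identify the vertical gap \(h(x_0):=\psi_\delta^\Phi(x_0)-\psi(x_0)\) with \(\tau(x_0,\delta)\) up to lower-order terms. Writing \(\operatorname{epi}(\psi_\delta^\Phi)\) as the intersection of the upper halfspaces bounded by the shifted tangent planes gives
\begin{equation*}
\psi_\delta^\Phi(x_0)=\sup_y\bigl\{\psi(y)+\langle\nabla\psi(y),x_0-y\rangle+\tau(y,\delta)\bigr\};
\end{equation*}
evaluating at \(y=x_0\) yields the lower bound \(h(x_0)\ge\tau(x_0,\delta)\). For the matching upper bound, convexity of \(\psi\) makes the bracket \(\psi(y)+\langle\nabla\psi(y),x_0-y\rangle\) maximal at \(y=x_0\) with value \(\psi(x_0)\) and a quadratic drop \(-\tfrac12\langle\nabla^2\psi(x_0)(y-x_0),y-x_0\rangle+o(\|y-x_0\|^2)\) near \(x_0\), bounded away from zero outside a fixed neighbourhood; combined with the continuity of \(\tau(\,\cdot\,,\delta)\), the supremum is attained in an \(O(\delta^{1/(n+2)})\)-neighbourhood of \(x_0\) and equals \(\psi(x_0)+\tau(x_0,\delta)(1+o(1))\). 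Using \(e^{-\psi}-e^{-\psi_\delta^\Phi}=e^{-\psi}h+O(h^2)\) then produces the pointwise limit
\begin{equation*}
\frac{e^{-\psi(x)}-e^{-\psi_\delta^\Phi(x)}}{\delta^{2/(n+2)}}\xrightarrow[\delta\to 0]{}c_{n+1}(\det\nabla^2\psi(x))^{1/(n+2)}e^{-\psi(x)}\Phi((x,\psi(x)))^{-2/(n+2)}
\end{equation*}
for a.e.\ \(x\in\R^n\), with the prefactor being exactly \(c_{n+1}\) from \eqref{cn+1}.

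The final step, and the main technical obstacle, is dominated convergence. The hypothesis \(\Phi\ge\eta>0\) enters crucially here: any set of \(\Phi\)-volume at most \(\delta\) has Lebesgue volume at most \(\delta/\eta\), so \(\operatorname{epi}(\psi)^\Phi_\delta\supseteq\operatorname{epi}(\psi)_{\delta/\eta}\), which in turn yields \(f-f_\delta^\Phi\le f-f_{\delta/\eta}\) pointwise, where \(f_{\delta/\eta}\) is the unweighted floating function. An integrable majorant for \((f-f_\delta^\Phi)/\delta^{2/(n+2)}\) can then be pulled from the corresponding unweighted bounds established in \cite{LiSchuettWerner}. The set of points \(x\) where \(\nabla^2\psi(x)\) fails to be positive definite (so that \(\det\nabla^2\psi(x)=0\)) or where \(\psi\) is not twice differentiable contributes zero to both sides and is handled by upper-semicontinuity arguments in the spirit of \cite{Lutwak1991,SchuettWerner1994}.
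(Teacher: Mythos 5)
Your overall structure --- pointwise cap-volume asymptotics on $\partial\operatorname{epi}(\psi)$ followed by dominated convergence --- matches the paper's, and two of your ingredients are correct and match the paper after simplification: your cap-volume asymptotic yields exactly $c_{n+1}$ (since $2^{-(n/2+1)\cdot 2/(n+2)}=1/2$), and your observation that $\Phi\geq\eta$ forces $\operatorname{epi}(\psi)^\Phi_\delta\supseteq\operatorname{epi}(\psi)_{\delta/\eta}$, so that the unweighted majorant from \cite{LiSchuettWerner} transfers with a factor $\eta^{-2/(n+2)}$, is in substance the paper's Lemma \ref{lemma:interchange0}, obtained somewhat more directly.

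The step that does not hold up is the upper bound for $h(x_0)=\psi_\delta^\Phi(x_0)-\psi(x_0)$. From $\psi_\delta^\Phi(x_0)=\sup_y\{\psi(y)+\langle\nabla\psi(y),x_0-y\rangle+\tau(y,\delta)\}$ the lower bound is immediate, but to localize the supremum you invoke ``continuity of $\tau(\cdot,\delta)$''. This is not justified: the asymptotic $\tau(y,\delta)\sim c(y)\,\delta^{2/(n+2)}$ holds at a.e.\ fixed $y$ with a $y$-dependent constant and error term, and Alexandrov second-order differentiability of $\psi$ at the single point $x_0$ provides no modulus of continuity for $y\mapsto\tau(y,\delta)$. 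To rule out the supremum being attained far from $x_0$ you would need the uniform statement $\sup_{y\notin V}\tau(y,\delta)\to 0$ as $\delta\to 0$, and in a neighbourhood of $x_0$ you would need $|\tau(y,\delta)-\tau(x_0,\delta)|$ to be dominated by the quadratic drop $\tfrac12\langle\nabla^2\psi(x_0)(y-x_0),y-x_0\rangle$; neither is established, and the paper does not attempt to do this. Instead, Lemma \ref{lemma:f-delta} works with the supporting hyperplane of $\operatorname{epi}(\psi_\delta^\Phi)$ at $z_\delta^\Phi=(x_0,\psi_\delta^\Phi(x_0))$, which by Proposition \ref{properties} cuts a cap of $\Phi$-volume exactly $\delta$; one then sandwiches $\partial\operatorname{epi}(\psi)$ locally between the inscribed and circumscribed ellipsoids $\mathcal{E}(\varepsilon^\pm)$ at $z_{x_0}$ and controls caps whose normal is tilted away from $N_\psi(z_{x_0})$ via Lemma 11 of \cite{SchuettWerner1990}. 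That keeps the estimate entirely local at $x_0$ and sidesteps the global supremum over $y$. Finally, the degenerate set where $\det\nabla^2\psi(x_0)=0$ requires its own pointwise estimate (the paper compares with an elliptic cylinder having $k$ axes of length $1/\varepsilon$); your appeal to upper semicontinuity of affine surface area does not replace it, since what is needed is that the pointwise quotient $(\psi_\delta^\Phi(x_0)-\psi(x_0))/\delta^{2/(n+2)}$ itself tends to $0$.
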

\vskip 2mm
 
The next result concerns the case of the exponential weight function $\Phi(z)=\Phi_e(z)=e^{-s}$, $z=(x,s)\in\R^{n+1}$. We cannot obtain it from the previous theorem as the assumption $\eta>0$ there is not satisfied. A separate proof is needed to handle this case.

\begin{theorem}\label{theo:f-deltafloat1}
Let $\psi$ be in $\text{Con}(\mathbb{R}^n)$ and $f(x)= e^{-\psi(x)}$.
Then,
\begin{equation*}\label{asaf} 
\lim _{\delta \rightarrow 0} \frac{ \int _{\mathbb R^{n}}(f(x)  - f_{\delta}^{e}(x) )  \  dx } {\delta^{2/(n+2)}} =  c_{n+1} \int_{\mathbb R^{n}} \left(\det\left( \nabla^2 \psi (x) \right)\right)^\frac{1}{n+2}  e^{-\frac{n}{n+2} \psi(x)}\, dx.
\end{equation*}
\end{theorem}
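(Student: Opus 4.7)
First, I would rewrite the left-hand side as a weighted $(n+1)$-dimensional volume via Fubini. Since $\int_{\psi(x)}^{\infty}e^{-s}\,ds = e^{-\psi(x)} = f(x)$, and by \eqref{flotfunct}--\eqref{flotlog} the epigraph of $\psi_\delta^e$ is the $(\Phi_e,\delta)$-floating set of $\operatorname{epi}(\psi)$ with $\Phi_e(x,s)=e^{-s}$,
\begin{equation*}
\int_{\R^n}\bigl(f(x)-f_\delta^e(x)\bigr)\,dx = \int_{\operatorname{epi}(\psi)\,\setminus\,\operatorname{epi}(\psi)^{\Phi_e}_\delta} e^{-s}\,d(x,s).
\end{equation*}
Thus the theorem becomes a weighted Sch\"utt--Werner asymptotic for the unbounded convex set $\operatorname{epi}(\psi)\subset\R^{n+1}$ with the specific weight $\Phi_e$.

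Next, I would perform the local cap analysis at a boundary point. At an Alexandrov point $z=(x,\psi(x))$ where $\nabla^2\psi(x)>0$, the outer unit normal is $N_\psi(z)=(\nabla\psi(x),-1)/\sqrt{1+\|\nabla\psi(x)\|^2}$, the Gaussian curvature of the graph is $\kappa(z)=\det\nabla^2\psi(x)/(1+\|\nabla\psi(x)\|^2)^{(n+2)/2}$, and the surface element is $d\mu(z)=\sqrt{1+\|\nabla\psi(x)\|^2}\,dx$. The Alexandrov Taylor expansion from \cite{SchuettWernerBook} yields the standard $\R^{n+1}$-cap asymptotic
\begin{equation*}
\vol_{n+1}(\text{cap of perpendicular depth }t) = \bigl(1+o(1)\bigr)\,\frac{2^{(n+2)/2}\vol_n(B_2^n)}{n+2}\,\kappa(z)^{-1/2}\,t^{(n+2)/2}
\end{equation*}
as $t\to 0$. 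Since $\Phi_e$ is continuous and the cap contracts to $z$, its $\Phi_e$-mass equals $e^{-\psi(x)}$ times its $(n+1)$-volume to leading order. Setting this equal to $\delta$ and using \eqref{cn+1} solves to
\begin{equation*}
t(z) = c_{n+1}\,\kappa(z)^{1/(n+2)}\,e^{2\psi(x)/(n+2)}\,\delta^{2/(n+2)}\bigl(1+o(1)\bigr),
\end{equation*}
which by Proposition~\ref{properties}(ii) is the perpendicular distance from $z$ to $\partial\operatorname{epi}(\psi)^{\Phi_e}_\delta$ along $-N_\psi(z)$.

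To leading order the collar is a normal slab of thickness $t(z)$ against $\partial\operatorname{epi}(\psi)$ on which $e^{-s}\approx e^{-\psi(x)}$; parametrizing it by $(z,\sigma)\mapsto z-\sigma N_\psi(z)$ with Jacobian tending to $1$, I get
\begin{equation*}
\int_{\operatorname{epi}(\psi)\,\setminus\,\operatorname{epi}(\psi)^{\Phi_e}_\delta} e^{-s}\,d(x,s) \sim \int_{\partial\operatorname{epi}(\psi)} e^{-\psi(x)}\,t(z)\,d\mu(z).
\end{equation*}
Inserting the formula for $t(z)$ and using the key identity $\kappa(z)^{1/(n+2)}\,d\mu(z) = (\det\nabla^2\psi(x))^{1/(n+2)}\,dx$ (in which the $1+\|\nabla\psi(x)\|^2$ factors collapse) together with $-1+\tfrac{2}{n+2}=-\tfrac{n}{n+2}$ gives exactly $c_{n+1}\,\delta^{2/(n+2)}\int_{\R^n}(\det\nabla^2\psi(x))^{1/(n+2)}e^{-n\psi(x)/(n+2)}\,dx$.

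The main obstacle will be the global-to-local justification. The weighted-floating-body asymptotics in \cite{SchuettWerner1990,Werner2002,BLW:2016} are stated for bounded convex bodies, whereas $\operatorname{epi}(\psi)$ is unbounded; moreover $\Phi_e$ is not bounded below away from zero, so Theorem~\ref{theo:f-deltafloat2} cannot be invoked. My plan is to exhaust $\operatorname{epi}(\psi)$ by the truncations $\operatorname{epi}(\psi)\cap\{s\leq M\}$, which are bounded convex sets (the sub-level sets of $\psi$ are bounded since $\int e^{-\psi}<\infty$ forces $\psi\to\infty$ at infinity), apply the bounded asymptotic there along the lines of \cite{SchuettWerner1994,SchuettWernerBook}, and then let $M\to\infty$. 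Tail control is provided by $\int_{\R^n}e^{-\psi}\,dx<\infty$ combined with the decay of $\Phi_e$ in $s$, which together furnish the integrable dominant needed to interchange $\delta\to 0$ with the boundary integral via dominated convergence. The measure-zero set where $\nabla^2\psi$ fails to exist or is degenerate (Alexandrov's theorem \cite{Alexandroff}) contributes zero to the right-hand side since $(\det\nabla^2\psi)^{1/(n+2)}=0$ there, and only $o(\delta^{2/(n+2)})$ to the left-hand side by the standard convex-geometric comparison from \cite{SchuettWerner1990}.
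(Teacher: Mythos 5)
Your overall strategy — local cap analysis, a change of variables collapsing the $\sqrt{1+\|\nabla\psi\|^2}$ factors, and an interchange-of-limits argument — matches the paper's in spirit. The Fubini rewrite of the left-hand side as the $\Phi_e$-weighted volume of $\operatorname{epi}(\psi)\setminus\operatorname{epi}(\psi)^{\Phi_e}_\delta$ is a valid alternative entry point (the paper instead works directly with the pointwise quantity $(\psi_\delta^e(x)-\psi(x))e^{-\psi(x)}$, which is what the $s$-integral over the vertical fiber contracts to), and your local analysis at an Alexandrov point is essentially the content of Lemma~\ref{lemma:f-delta}. You also correctly identify that Theorem~\ref{theo:f-deltafloat2} cannot be invoked because $\Phi_e$ fails the lower bound $\eta>0$.

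The genuine gap is in your final paragraph, where you assert that ``tail control is provided by $\int_{\R^n}e^{-\psi}\,dx<\infty$ combined with the decay of $\Phi_e$ in $s$, which together furnish the integrable dominant.'' This is exactly the hard part, and the statement as given is not sufficient. The difficulty is that when one bounds the quantity $(\psi^e_\delta(x)-\psi(x))/\delta^{2/(n+2)}$ uniformly in $\delta$ by inserting the rolling ball $B_2^{n+1}(z_x - r_\psi(x)N_\psi(z_x), r_\psi(x))$ into $\operatorname{epi}(\psi)$, the lower bound on the $\Phi_e$-mass of the cut-off cap involves $\min\Phi_e$ over the cap, which is of order $e^{-(\psi(x)+2r_\psi(x))}$. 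The resulting dominator therefore carries an \emph{increasing} factor $e^{4r_\psi(x)/(n+2)}$, and since $r_\psi(x)$ is only known to be bounded on compacta, this term can destroy integrability. The paper resolves this by introducing the modified rolling function $\rho_\psi(x)=\min\{r_\psi(x),|\psi(x)|\}$ in \eqref{cases}, proving the uniform bound of Lemma~\ref{lemma:interchange2} with $\rho_\psi$ in place of $r_\psi$ on $\{|\psi|>1\}$, and then establishing the integrability of the resulting dominator in Lemma~\ref{integral2} by a case analysis that reduces to Lemma~\ref{integral1} and its remark with exponent $\beta=\frac{n-4}{n+2}$ and $\beta=\frac{n+4}{n+2}$. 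None of this is automatic, and your truncation-by-$\{s\le M\}$ scheme introduces a further complication you do not address: truncating the epigraph changes the weighted floating set, so one must also show the truncation is harmless for small $\delta$, which is an additional lemma. You should either supply the $\rho_\psi$-based dominating function and its integrability proof, or explain a genuinely different mechanism for passing the limit inside the integral.
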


\begin{remark}
If the determinant of the Hessian of $\psi$ is zero almost everywhere, the right hand terms in the two theorems above are zero as well.  This is in particular the case when $\psi$ is an affine or piecewise affine function. When $\psi$ is affine this follows immediately because, as noted above,  $\psi^\Phi_\delta=\psi$ and $f^\Phi_\delta=f$. When $\psi$ is piecewise affine, then the difference $\int _{\mathbb R^{n}}(f  - f_\delta ^\Phi)  \,  dx$ is of order $\frac{\delta}{(\log \delta)^n}$
and thus the left hand sides of the theorems are also $0$.
It is therefore enough to consider  functions that  are not (piecewise) affine.
\end{remark}

We now show that under the assumptions of the theorems, the right hand side integrals are finite. To do so, we use the notion of a rolling function which  was  introduced for convex bodies in \cite{SchuettWerner1990} and extended to not necessarily bounded  convex sets in \cite{LiSchuettWerner}.
Let $C$ be a closed convex set in $\R^n$  and let $z \in \partial C$ be such that $N_{C} (z)$, the outer normal vector of $C$ at $z$,  is unique.  We put $r_C(z)$ to be the radius of the largest 
Euclidean ball contained in $C$ that touches $C$ in $z$, 
\begin{equation} \label{rC}
r_C(z) = \max\{\rho: B^{n}_2 (z - \rho N_{C} (z), \rho ) \subset C\}.
\end{equation}
If $N_C(z)$ is not unique, we put $r_C(z) =0$. 
The function $r_{C}$ is called the \textit{rolling function} of $C$.
If $C=\operatorname{epi}(\psi)$, we  will  use from now on  the notation  
\begin{equation} \label{r(x)}
r_\psi(x) = r _{\operatorname{epi}(\psi)}((x, \psi(x))).
\end{equation}
Since $\psi$ is lower semi continuous,   the epigraph of $\psi$ is a closed set.
For functions $\psi$ such that $e^{-\psi}$ is integrable,  $r_{\psi}(z)$ is bounded  and for almost every $x \in \mathbb{R}^n$, $(x, \psi(x))$ is an element of a  Euclidean ball contained in the epigraph of $\psi$.

For a convex  function $\psi\in\text{Con}(\mathbb{R}^n)$ the following formulas  hold for the (generalized) Gaussian curvature $\kappa_{\psi} (z)$  and the outer unit normal $N_{\psi} (z)$ in $z=(x, \psi(x)) \in \partial \operatorname{epi}(\psi)$, see, e.g., \cite{CFGLSW}:
\begin{align} 
\kappa_{\psi} (z) &= \frac{\det (\nabla^2 \psi(x))}{\left(1 + \|\nabla \psi (x)\|^2\right)^\frac{n+2}{2}}\label{curvature}
\intertext{and}
\langle N_{\psi} (z), e_{n+1}\rangle&=\frac{1}{(1+\|\nabla \psi (x)\|^2)^{\frac{1}{2}}}.\label{normal}
\end{align}
As $ \kappa_{\psi} (z) = \prod_{i=1}^n \frac{1}{\rho^\psi_i(z)}$, where $\rho^\psi_i(z)$, $1 \leq i \leq n$,  are the principal radii of curvature, we have for almost all $x \in \mathbb{R}^n$ that  $r_\psi(x) \leq \frac{1}{\left(\kappa_{\psi} (z)\right)^\frac{1}{n}}$ . 
With \eqref{curvature} we thus get 
$$
r_\psi(x) \leq \frac{1}{\left(\kappa_{\psi} (z)\right)^\frac{1}{n}}  = \frac {\left(1 + \|\nabla \psi (x)\|^2\right)^\frac{n+2}{2n}} {\left(\det \nabla^2 \psi(x)\right)^\frac{1}{n}}.
$$
Therefore,
\begin{eqnarray*}
\int_{\mathbb R^{n}} \left(\det\left( \nabla^2 \psi (x) \right)\right)^\frac{1}{n+2}  e^{-\frac{n}{n+2} \psi(x)}\, dx \leq 
\int_{\mathbb R^{n}} \frac{ \left(1 + \|\nabla \psi (x)\|^2\right)^\frac{1}{2}}{r_\psi(x) ^\frac{n}{n+2} }   e^{-\frac{n}{n+2} \psi(x)}\, dx  
\end{eqnarray*}
and 
\begin{align*}
&\int _{\mathbb R^{n}}\left(\det\left( \nabla^2( \psi (x))\right) \right)^\frac{1}{n+2} \, \Phi(x, \psi(x))^{-\frac{2}{n+2}}  f(x) \,   dx \\
&\qquad\leq \eta ^{-\frac{2}{n+2}}\,  \int_{\mathbb R^{n}} \frac{ \left(1 + \|\nabla \psi (x)\|^2\right)^\frac{1}{2}}{r_\psi(x) ^\frac{n}{n+2} }   f(x) \,   dx  .
\end{align*}
Now, Lemmas \ref{integral1} and  \ref{integral2} below show that the last two integrals are indeed finite. 

Next, we record two propositions, which follow 
from the lemmas needed for the proof of 
Theorems \ref{theo:f-deltafloat2} and \ref{theo:f-deltafloat1}. 
 
\begin{proposition}\label{corollary1}
Let $\psi$ be in $\text{Con}(\mathbb{R}^n)$.  Let $\eta >0$ and 
let $\Phi: \mathbb{R}^{n+1} \to [\eta, \infty)$ be a continuous function.
Then
\begin{eqnarray*}
&& \lim _{\delta \rightarrow 0} \frac{ \int_{\mathbb R^{n}} \left| \psi^\Phi_\delta(x) - \psi (x)\right| \, e^{-\psi(x)} \,  dx} {\delta^{2/(n+2)}}\\
&& \hskip 20mm = c_{n+1} \int_{\mathbb R^{n}} \left(\det\left( \nabla^2 \psi (x) \right)\right)^\frac{1}{n+2} \, \Phi((x, \psi(x)))^{-\frac{2}{n+2}}  e^{-\psi(x)} \, dx.
\end{eqnarray*}
\end{proposition}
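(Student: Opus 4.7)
The plan is to reduce the statement to Theorem \ref{theo:f-deltafloat2} by a pointwise Taylor expansion of $t\mapsto e^{-t}$. Since $\operatorname{epi}(\psi^\Phi_\delta)=\operatorname{epi}(\psi)^\Phi_\delta\subseteq \operatorname{epi}(\psi)$, one has $\psi^\Phi_\delta\ge \psi$ pointwise, so $|\psi^\Phi_\delta-\psi|=\psi^\Phi_\delta-\psi$ throughout. Expanding $e^{-\psi^\Phi_\delta(x)}$ to second order about $\psi(x)$ yields
\[
f(x)-f^\Phi_\delta(x)=(\psi^\Phi_\delta(x)-\psi(x))\,e^{-\psi(x)}-\tfrac{1}{2}(\psi^\Phi_\delta(x)-\psi(x))^2 e^{-\xi(x)},
\]
for some $\xi(x)\in[\psi(x),\psi^\Phi_\delta(x)]$, and since $e^{-\xi(x)}\le e^{-\psi(x)}$ this gives the two-sided inequality
\[
0\le (\psi^\Phi_\delta-\psi)\,e^{-\psi}-(f-f^\Phi_\delta)\le \tfrac{1}{2}(\psi^\Phi_\delta-\psi)^2\, e^{-\psi}.
\]

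Integrating over $\R^n$ and dividing by $\delta^{2/(n+2)}$, Theorem \ref{theo:f-deltafloat2} identifies the limit of the middle expression with the right-hand side of the corollary. It therefore suffices to prove that the quadratic error
\[
E_\delta:=\int_{\R^n}(\psi^\Phi_\delta(x)-\psi(x))^2\, e^{-\psi(x)}\,dx
\]
is $o(\delta^{2/(n+2)})$ as $\delta\to 0$. For this I would use the pointwise convergence $\psi^\Phi_\delta(x)\to \psi(x)$, which holds at every point $x$ with positive rolling function $r_\psi(x)>0$, i.e.\ on a set of full Lebesgue measure since $e^{-\psi}\in L^1(\R^n)$. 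Fix $\varepsilon>0$ and split $E_\delta$ according to whether $\psi^\Phi_\delta-\psi\le\varepsilon$ or not: on the sub-level set the inequality $(\psi^\Phi_\delta-\psi)^2\le\varepsilon(\psi^\Phi_\delta-\psi)$ combined with the upper bound above gives a contribution bounded by $\varepsilon\bigl(\int(f-f^\Phi_\delta)\,dx+\tfrac{1}{2}E_\delta\bigr)$, so that the $\tfrac{1}{2}\varepsilon E_\delta$ piece can be absorbed on the left for small $\varepsilon$; on the super-level set one exploits that $\Phi\ge \eta$ implies $\operatorname{epi}(\psi^\Phi_\delta)\supseteq\operatorname{epi}(\psi)_{\delta/\eta}$ and hence $\psi^\Phi_\delta\le \psi_{\delta/\eta}$, so rates of convergence can be imported from the unweighted setting.

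The main obstacle is producing an integrable majorant for the scaled quadratic integrand $(\psi^\Phi_\delta-\psi)^2 e^{-\psi}/\delta^{2/(n+2)}$, since a priori $\psi^\Phi_\delta-\psi$ need not be uniformly bounded in $x$. The pointwise asymptotics underlying the proof of Theorem \ref{theo:f-deltafloat2} suggest $\psi^\Phi_\delta(x)-\psi(x)\sim c(x)\,\delta^{2/(n+2)}$ with $c(x)$ expressible through the Gaussian curvature of $\partial\operatorname{epi}(\psi)$, the weight $\Phi$, and $\|\nabla\psi(x)\|$, so the quadratic integrand is pointwise of order $\delta^{4/(n+2)}$, which is precisely the order needed. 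Assembling the required dominating function -- of the same nature as those appearing in Lemmas \ref{integral1} and \ref{integral2} but squared against $e^{-\psi}$ -- is the technical heart of the argument, and the lemmas used to establish Theorem \ref{theo:f-deltafloat2} should supply exactly this. Once that domination is in place, letting first $\delta\to 0$ and then $\varepsilon\to 0$ yields $E_\delta=o(\delta^{2/(n+2)})$ and finishes the proof.
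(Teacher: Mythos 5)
Your reduction to Theorem \ref{theo:f-deltafloat2} via the Taylor expansion of $e^{-t}$ is a genuinely different route from the paper's. The paper does \emph{not} pass through Theorem \ref{theo:f-deltafloat2} at all: it proves Proposition \ref{corollary1} directly, in parallel with the theorem, by applying dominated convergence to the integrand $\bigl(\psi^\Phi_\delta(x)-\psi(x)\bigr)e^{-\psi(x)}/\delta^{2/(n+2)}$ itself. The pointwise limit is supplied by Lemma \ref{lemma:f-delta}(i)--(ii) (which gives the asymptotics for $\psi^\Phi_\delta-\psi$ directly, not only for $f-f^\Phi_\delta$), the $\delta$-uniform dominating function is the one from Lemma \ref{lemma:interchange0} multiplied by $e^{-\psi(x)}$, and its integrability is Lemma \ref{integral1}. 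There is no second-order error to control.

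In your approach there is a genuine gap that, with the tools in the paper, is not easily closed. You must show $E_\delta:=\int(\psi^\Phi_\delta-\psi)^2e^{-\psi}\,dx=o(\delta^{2/(n+2)})$, and you correctly identify the missing piece as an integrable majorant for $(\psi^\Phi_\delta-\psi)^2e^{-\psi}/\delta^{2/(n+2)}$. The natural bound, $\psi^\Phi_\delta-\psi\le \delta^{2/(n+2)}G(x)$ with
\[
G(x)=2^{\frac{3n+4}{n+2}}c_{n+1}\,\frac{(1+\|\nabla\psi(x)\|^2)^{1/2}}{\eta^{2/(n+2)}\,r_\psi(x)^{n/(n+2)}},
\]
gives $(\psi^\Phi_\delta-\psi)^2e^{-\psi}/\delta^{2/(n+2)}\le \delta^{2/(n+2)}G(x)^2e^{-\psi(x)}$. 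But $G^2$ carries the factor $r_\psi(x)^{-2n/(n+2)}$, and for $n\ge 2$ the exponent $2n/(n+2)\ge 1$ falls outside the range $\alpha<1$ covered by Lemma \ref{integral1}, so $G^2e^{-\psi}$ is not known to be integrable. Your $\varepsilon$-splitting does not repair this: the absorption step implicitly assumes $E_\delta<\infty$, and on the superlevel set $\{\psi^\Phi_\delta-\psi>\varepsilon\}$ you have no uniform pointwise control (the difference $\psi^\Phi_\delta-\psi$ is not bounded uniformly in $x$ for fixed $\delta$, since $r_\psi(x)$ can be arbitrarily small). Hence the claim ``the lemmas used to establish Theorem \ref{theo:f-deltafloat2} should supply exactly this'' is not correct as stated; the direct route of the paper is what actually works, and it is shorter anyway because Lemma \ref{lemma:f-delta} already addresses $\psi^\Phi_\delta-\psi$ on the same footing as $f-f^\Phi_\delta$.
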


The case of the exponential weight function $\Phi=\Phi_e$ needs a separate treatment and leads to the following result.
 
\begin{proposition}\label{corollary2}
Let $\psi$ be in $\text{Con}(\mathbb{R}^n)$. 
Then
\begin{eqnarray*}
\lim _{\delta \rightarrow 0} \frac{ \int_{\mathbb R^{n}} \left| \psi^e_\delta(x) - \psi (x)\right| \, e^{-\psi(x)} \,  dx} {\delta^{2/(n+2)}} =  
 c_{n+1} \int_{\mathbb R^{n}} \left(\det\left( \nabla^2 \psi (x) \right)\right)^\frac{1}{n+2} \,  e^{-\frac{n}{n+2} \psi(x)} \, dx.
\end{eqnarray*}
\end{proposition}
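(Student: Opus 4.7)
The plan is to deduce Proposition \ref{corollary2} from Theorem \ref{theo:f-deltafloat1} by comparing the integrals $\int e^{-\psi}(\psi^e_\delta-\psi)\,dx$ and $\int(f-f^e_\delta)\,dx$, which differ only by a second-order term. Since the epigraph of $\psi^e_\delta$ is contained in the epigraph of $\psi$, the floating operation produces $\psi^e_\delta \geq \psi$ pointwise, so the absolute value can be removed. Writing $h(x) := \psi^e_\delta(x)-\psi(x) \geq 0$, the elementary identity
\begin{equation*}
f(x)-f^e_\delta(x) \;=\; e^{-\psi(x)}\bigl(1-e^{-h(x)}\bigr)
\end{equation*}
combined with the two-sided bound $0 \leq t - (1-e^{-t}) \leq t^{2}/2$ valid for $t \geq 0$ (applied to $t=h(x)$) yields the sandwich
\begin{equation*}
0 \;\leq\; e^{-\psi(x)}(\psi^e_\delta(x)-\psi(x)) - \bigl(f(x)-f^e_\delta(x)\bigr) \;\leq\; \tfrac{1}{2}\,e^{-\psi(x)}(\psi^e_\delta(x)-\psi(x))^{2}.
\end{equation*}

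Integrating this over $\R^n$ and dividing by $\delta^{2/(n+2)}$, Theorem \ref{theo:f-deltafloat1} handles the main term
\begin{equation*}
\frac{1}{\delta^{2/(n+2)}}\int_{\R^n}\bigl(f(x)-f^e_\delta(x)\bigr)\,dx \;\longrightarrow\; c_{n+1}\int_{\R^n}\bigl(\det\nabla^{2}\psi(x)\bigr)^{\frac{1}{n+2}}e^{-\frac{n}{n+2}\psi(x)}\,dx,
\end{equation*}
which is exactly the target value. What remains is to show that the quadratic error
\begin{equation*}
E(\delta) \;:=\; \frac{1}{\delta^{2/(n+2)}}\int_{\R^n}e^{-\psi(x)}(\psi^e_\delta(x)-\psi(x))^{2}\,dx
\end{equation*}
tends to zero as $\delta\to 0$. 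The key ingredient here will come from the proof of Theorem \ref{theo:f-deltafloat1}, which must establish the pointwise limit $(\psi^e_\delta(x)-\psi(x))/\delta^{2/(n+2)} \to c_{n+1}\,\kappa_\psi(x,\psi(x))^{1/(n+2)}e^{\frac{2}{n+2}\psi(x)}\langle N_\psi,e_{n+1}\rangle^{-\text{stuff}}$ together with an integrable domination against $e^{-\psi}$. In particular, for almost every $x$ the ratio $(\psi^e_\delta(x)-\psi(x))/\delta^{2/(n+2)}$ stays bounded, so
\begin{equation*}
\frac{e^{-\psi(x)}(\psi^e_\delta(x)-\psi(x))^{2}}{\delta^{2/(n+2)}} \;=\; e^{-\psi(x)}(\psi^e_\delta(x)-\psi(x))\cdot\frac{\psi^e_\delta(x)-\psi(x)}{\delta^{2/(n+2)}},
\end{equation*}
and since $\psi^e_\delta(x)-\psi(x)\to 0$ pointwise, the integrand converges pointwise to $0$; by the dominating function supplied by the proof of Theorem \ref{theo:f-deltafloat1} (tied to the rolling function $r_\psi$ as in Lemmas \ref{integral1} and \ref{integral2}), dominated convergence gives $E(\delta)\to 0$.

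Combining the two-sided estimate with the limit of the main term and $E(\delta)\to 0$ produces the conclusion. The main obstacle is the last step: one must reuse the pointwise asymptotics and the integrable envelope for $\psi^e_\delta-\psi$ that are established in the proof of Theorem \ref{theo:f-deltafloat1}, rather than just the integrated statement; this is the only place where some care (as opposed to a direct algebraic manipulation) is required, and the rolling-function bound on the right-hand side integrals noted in the discussion preceding Proposition \ref{corollary1} provides exactly the dominant needed for the dominated-convergence argument.
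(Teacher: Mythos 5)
Your reduction of the Proposition to Theorem \ref{theo:f-deltafloat1} via the identity $f - f^{e}_{\delta} = e^{-\psi}\bigl(1-e^{-h}\bigr)$, with $h := \psi^{e}_{\delta}-\psi \ge 0$, is a genuinely different and more economical route than the paper's, which simply reruns the dominated-convergence argument directly on $\delta^{-2/(n+2)}e^{-\psi}h$, using Lemma \ref{lemma:f-delta} for the pointwise limit and Lemma \ref{lemma:interchange2} together with Lemma \ref{integral2} for the integrable envelope. Your first three steps (monotonicity, the elementary identity, the two-sided bound via $0\le t-(1-e^{-t})\le t^2/2$) are correct.

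The last step, however, has a real gap. You define $E(\delta) = \delta^{-2/(n+2)}\int e^{-\psi}h^2\,dx$ and claim it tends to $0$ by dominated convergence, citing the dominating function supplied by the proof of Theorem \ref{theo:f-deltafloat1}. But the envelope furnished by Lemma \ref{lemma:interchange2} dominates $\delta^{-2/(n+2)}e^{-\psi}h$, not $\delta^{-2/(n+2)}e^{-\psi}h^2$. Passing from the linear to the quadratic quantity would require $h$ to be uniformly bounded in $x$ for small $\delta$, and it is not: by Lemma \ref{lemma:f-delta}, wherever $\nabla^2\psi$ is positive definite one has $h/\delta^{2/(n+2)}\to c_{n+1}\bigl(\det\nabla^2\psi(x)\bigr)^{1/(n+2)}e^{\frac{2}{n+2}\psi(x)}$, so already for $\psi(x)=\|x\|^2$ the difference $h$ is of order $\delta^{2/(n+2)}e^{\frac{2}{n+2}\|x\|^2}$, unbounded in $x$. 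Writing $E(\delta)=\int h\cdot\bigl(\delta^{-2/(n+2)}e^{-\psi}h\bigr)\,dx$ does give pointwise convergence of the integrand to $0$, as you note, but the product has no integrable majorant supplied by Lemmas \ref{lemma:interchange2}, \ref{integral1} or \ref{integral2} (the natural candidate $G(x)^2 e^{\psi(x)}$, $G$ being the Lemma \ref{lemma:interchange2} bound, is not integrable because its exponential part grows and the rolling-function exponent $2n/(n+2)$ exceeds the range $\alpha<1$ of Lemma \ref{integral1}).

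The fix is cheap and preserves your structure: supplement $t-(1-e^{-t})\le t^2/2$ with the cruder bound $t-(1-e^{-t})\le t$ for $t\ge 0$ (since $1-e^{-t}\ge 0$). Then the error term you actually need is $\delta^{-2/(n+2)}\int e^{-\psi}\bigl(h-(1-e^{-h})\bigr)\,dx$, whose integrand obeys
\[
0 \ \le\ \frac{e^{-\psi}\bigl(h-(1-e^{-h})\bigr)}{\delta^{2/(n+2)}} \ \le\ \frac{e^{-\psi}\,h}{\delta^{2/(n+2)}},
\]
exactly the envelope of Lemma \ref{lemma:interchange2}, integrable by Lemma \ref{integral2}. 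The integrand tends to $0$ pointwise because $h-(1-e^{-h})=o(h)$ as $h\to 0$ while $\delta^{-2/(n+2)}e^{-\psi}h$ has a finite pointwise limit by Lemma \ref{lemma:f-delta}. Dominated convergence then yields that the error vanishes, and your reduction closes correctly.
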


Theorem \ref{theo:f-deltafloat2} and Proposition \ref{corollary1} as well as Theorem \ref{theo:f-deltafloat1} and  Proposition \ref{corollary2} motivate the following definition.

\begin{definition}
Let $\psi$ be in $\text{Con}(\mathbb{R}^n)$.  Let $\eta >0$ and 
let $\Phi: \mathbb{R}^{n+1} \to [\eta, \infty)$ be a continuous function. Then
\begin{equation}\label{asf}
	\operatorname{as}_\Phi(f) =\operatorname{as}_\Phi(\psi) = \int_{\mathbb R^{n}} \left(\det\left( \nabla^2 \psi (x) \right)\right)^\frac{1}{n+2} \Phi((x, \psi(x)))^{-\frac{2}{n+2}} f(x) \,  d x.
\end{equation}
is called the $\Phi$-(weighted) affine surface area of  the log concave function $f=e^{-\psi}$  or the convex function $\psi$, respectively. 

For the exponential weight function $\Phi=\Phi_e$ we call 
\begin{equation}\label{asf-e}
	\operatorname{as}_{\Phi_e} (f)=\operatorname{as}_{\Phi_e}(\psi)  = \int_{\mathbb R^{n}} \left(\det\left( \nabla^2 \psi (x) \right)\right)^\frac{1}{n+2} \, e^{-\frac{n}{n+2}\psi(x) }\,  d x.
\end{equation}
the $\Phi_e$-(weighted) affine surface area of  the log concave function $f=e^{-\psi}$  or the convex function $\psi$, respectively. 
\end{definition}

Our results demonstrate that  these $\Phi$-affine surface areas can be obtained as the  limits of  weighted (with weight $\Phi((x, \psi(x))^{-\frac{2}{n+2}} $) ``volume" differences of the log concave function $f=e^{-\psi}$  and its floating function $f_\delta^\Phi$ or the convex function $\psi$ and its floating function $\psi_\delta^\Phi$  (with weight $\Phi((x, \psi(x))^{-\frac{2}{n+2}}  f(x) $). This resembles the behavior of the classical affine surface area for convex bodies of \cite{SchuettWerner1990}, mentioned in (\ref{eq:5-2-24B}). 
\par
 
Another definition of affine surface area for log-concave $f$ or convex functions $\psi$, respectively, was given in \cite{CFGLSW}. For $\lambda \in \mathbb{R}$, it has been defined as
\begin{equation*}\label{CFGLSW}
\operatorname{as}_\lambda (f) = \operatorname{as}_\lambda (\psi) = \int_{} e^{(2\lambda-1)\psi(x)-\lambda \langle x, \nabla\psi(x)\rangle}\left(\det \, \nabla^2 \psi (x)\right)^\lambda dx.
\end{equation*}
If we choose $\Phi$ in \eqref{asf} accordingly, we recover $\operatorname{as}_\lambda$. Thus, $\operatorname{as}_\Phi$ is a generalization of the previously introduced notion.
 
Now, we give  reasons why we call  \eqref{asf} an \textit{affine surface area}. We first recall from \cite{Hug, Lutwak:1996, SchuettWerner2004} the definition  of  the $L_p$-affine surface areas $\operatorname{as}_p(K)$ for convex bodies $K\subset\R^n$.
For $- \infty \leq p \leq \infty$, $p \neq -n$, they are  defined as 
\begin{equation}\label{asp-K}
\operatorname{as}_p(K) = \int_{\partial K} \frac{ \kappa_K(z)^\frac{p}{n+p}}{\langle z, N_K(z) \rangle^\frac{n(p-1)}{n+p}} d\mu_K(z).
\end{equation}
In particular, if we choose $p=1$ we get back the (usual) affine surface area of $K$, that is, 
\begin{equation}\label{asK}
\operatorname{as}(K) = \operatorname{as}_1(K) =  \int _{\partial K } \kappa_{K}(z)^\frac{1}{n+1} \, d{\mu_K}(z).
\end{equation}
 
Next, let us pass from integration over $\mathbb{R}^n$ in \eqref{asf} to integration over $\partial \operatorname{epi}(\psi)$
with the change of variable formula $\left(1 + \|\nabla \psi (x)\|^2\right)^\frac{1}{2}dx = d \mu_{\operatorname{epi}(\psi)}$. With \eqref{curvature} we get
\begin{equation*}
 \operatorname{as} _\Phi(f) =\int_{\partial \operatorname{epi}(\psi)} \left(\kappa_{\partial \operatorname{epi}(\psi)} (z)\right)^\frac{1}{n+2} 
\Phi(z)^{-\frac{2}{n+2}} e^{-\langle z,e_{n+1}\rangle}d \mu_{\operatorname{epi}(\psi)}(z)
\end{equation*}
and for $\Phi=\Phi_e$,
\begin{equation}
 \operatorname{as} _{\Phi_e}(f) =\int_{\partial \operatorname{epi}(\psi)} \left(\kappa_{\partial \operatorname{epi}(\psi)} (z)\right)^\frac{1}{n+2}  e^{-\frac{n}{n+2} z_{n+1}}d \mu_{\operatorname{epi}(\psi)}(z).
\end{equation}
Thus the expression  \eqref{asf}  coincides for the  unbounded convex set $\operatorname{epi}(\psi)$  with the one  for the affine surface area of a convex body in $\mathbb{R}^{n+1}$, as given in \eqref{asK}, modulo an additional weight function. This is one reason  to call the quantity the affine surface area of $f$.  

Another reason is the observation that $ \operatorname{as}_\Phi(f)$  shares several properties with $ \operatorname{as}_p(K)$. Firstly,  an  affine invariance property holds with the same degree of homogeneity as for the affine surface area for convex bodies in $\mathbb{R}^{n+1}$. Formally, for all   affine transformations $A: \R^n \rightarrow \R^n$ such that $\det A \neq 0$, one has that
$$
\operatorname{as}_\Phi(f \circ A) = |\det A |^{-\frac{n}{n+2}} \   \operatorname{as}_\Phi(f).
$$
This identity is  easily checked using  the fact that $\Hess_x (\psi\circ A)=A^T\Hess_{Ax}\psi A$. Moreover, as for convex bodies, a valuation property holds for $\operatorname{as}_\Phi(f)$. Namely, for log concave functions $f_1=e^{-\psi_1}$ and  $f_2=e^{-\psi_2}$ one has that
that 
\[ \operatorname{as}_\Phi(f_1)+ \operatorname{as}_\Phi(f_2) =  \operatorname{as}_\Phi(\max (f_1, f_2)) + \operatorname{as}_\Phi(\min (f_1, f_2)), 
\]
provided the function $\min(\psi_1,\psi_2)$ is convex as well. 
 
Yet another reason comes from  the next  observation, which shows that the definition for affine surface area for a function agrees with the definition for convex bodies if we choose as a function the gauge function $\| \,\cdot\, \|_K$ of a  convex body $K$ containing the origin in its interior. More explicitly, 
$$
\|x\|_K = \min\{ \alpha\geq 0: \ x \in  \alpha K\} = \max_{y \in K^\circ} \langle x, y \rangle = h_{K^\circ} (x),\qquad x\in\R^n,
$$ 
where $h_{K^\circ}$ stands for the support function of the polar body $K^\circ$ of $K$.
If we choose $\psi(x) = \frac{\|x\|_K^2}{2}$, then 
\begin{equation}\label{as-Gleichung}
  \operatorname{as}_{\Phi_e} \Big(\frac{\|\cdot\|_K^2}{2}\Big) = \Big(1+\frac{2}{n}\Big)^\frac{n}{2} \, \frac{(2\pi)^\frac{n}{2}}{n \,  \vol_n (B_2^n)}\  \operatorname{as}_\frac{n}{n+1}(K).
\end{equation}
Although a similar observation was already made  in \cite{CFGLSW}, we include the argument leading to \eqref{as-Gleichung} for completeness. We integrate in spherical coordinates with respect to the normalized cone measure  $\sigma_K$  associated with $K$. 
Thus, if we write $x=r\theta$, with $\theta\in\partial K$, then $dx=n \ \vol_n (K) r^{n-1}drd\sigma_K(\theta)$ and we get
\begin{eqnarray*}
 \operatorname{as}_{\Phi_e}  \left(\frac{\|\cdot\|_K^2}{2}\right)&=&  \int_{\mathbb R^{n}} \det\left( \nabla^2( \psi (x)) \right)^\frac{1}{n+2} \  e^{-\psi(x)} dx\\
 &=&n \ \vol_n (K) \int_0^{+\infty}r^{n-1}e^\frac{-n r^2}{2(n+2)}dr \int_{\partial K}  \left(\det \,\nabla^2  \psi (\theta) \right)^\frac{1}{n+2} \ d\sigma_K(\theta)\\
 &=& \left(1+\frac{2}{n}\right)^\frac{n}{2} \, (2\pi)^\frac{n}{2}\frac{\vol_n (K)}{\vol_n (B_2^n) }\int_{\partial K}   \left(\det \, \nabla^2 \psi (\theta) \right)^\frac{1}{n+2} \ d\sigma_K(\theta).
\end{eqnarray*}
The normalized cone measure $\sigma_K$ is related to the surface measure $\mu_K$ on $\partial K$ by
$$
d\sigma_K(x)=\frac{\langle \theta,N_K(\theta)\rangle d\mu_K(\theta)}{n \   \vol_n (K)}.
$$
Moreover, Lemma 1 of \cite{CFGLSW} and its proof in  show that
$
\det \, \nabla^2 \psi (\theta) = \frac{\kappa_K(\theta)}{\langle \theta,N_K(\theta)\rangle ^{n+1}}
$.
Thus,
\begin{align*}
&\operatorname{as}_{\Phi_e} \left(\frac{\|\cdot\|_K^2}{2}\right)\\
&= \left(1+\frac{2}{n}\right)^\frac{n}{2} \frac{(2\pi)^\frac{n}{2}}{n \vol_n (B_2^n)}\int_{\partial K}  \left(\frac{ \kappa_K (x) }{ \langle x,N_K(x)\rangle^{n+1} }\right)^\frac{1}{n+2} \langle x,N_K(x)\rangle\, d\mu_K(x)\\
&=\left(1+\frac{2}{n}\right)^\frac{n}{2} \, \frac{(2\pi)^\frac{n}{2}}{n  \vol_n (B_2^n)}\  \operatorname{as}_\frac{n}{n+1}(K).
\end{align*}
\par
 Finally, the most compelling  reason to call the quantity  $\operatorname{as}_\Phi(f)=\operatorname{as}_\Phi(\psi)$ a weighted  affine surface area
is the following theorem proved in \cite{Werner2002} in the case of convex bodies in $\mathbb{R}^{n}$. It says that with the weighted floating body $K^\Phi_\delta$, 
$$
\lim_{\delta \rightarrow 0} \frac{\vol_n(K) - \vol_n(K^\Phi_\delta)}{\delta^{2/(n+1)}} = c_n \  \int _{\partial K } \left( \kappa_{K}(z)\right)^\frac{1}{n+1} \Phi(z) ^{-\frac{2}{n+1}} d_{\mu_K}(z),
$$
where $c_n=\frac{1}{2} \left(\frac{n+1}{\vol_{n-1} (B^{n-1}_2)}\right)^\frac{2}{n+1}$, see also \eqref{eq:5-2-24B} in the introduction, 
and \cite{HuangSlomkaWerner, SchuettWerner2003, SchuettWerner2004} for a weighted random  analogue to (\ref{random}).
Theorem \ref{theo:f-deltafloat1}  and Theorem \ref{theo:f-deltafloat2} are their analogues for log concave functions. Thus these theorems provide  a  geometric description   of  
weighted affine  surface area for such functions.

\subsection{The case of $s$-concave functions}

In this section we present our main result for $s$-concave functions. 
Throughout this section we denote
$$
c_{n,s} = {s\over 2}\Bigl( \frac{n+s+1}{(n+s)\vol_{n+s}(B_2^{n+s})}\Bigr)^{\frac{2}{n+s+1}}.
$$

\begin{theorem}\label{thm:sconcave}
Let $n,s\in\mathbb{N}$ and $f:\R^n\to\R_+$ an $s$-concave function. Further, let $\Phi:\R^{n+s}\to\R_+$ be a continuous function satisfying $\Phi(x,y)=\phi(x,\|y\|)$ for some $\phi:\R^{n+1}\to\R_+$. Then,
\begin{align*}
&\lim_{\delta\to 0} \frac{\int_{\R^n} f(x)-f^\Phi_\delta(x)\,dx}{\delta^{2/(n+s+1)}} \\
&\qquad= c_{n,s}\,\int_{\R^n}|\det \nabla^2f(x)^{1/s}|^{1\over n+s+1}f(x)^{\frac{(s-1)(n+s)}{s(n+s+1)}}\phi(x,f^{1/s}(x))^{-\frac{2}{n+s+1}}\, dx.
\end{align*}
\end{theorem}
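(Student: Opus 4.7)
The plan is to reduce the statement to the weighted floating body asymptotics of \cite{Werner2002} applied to the convex body $K_f^s\subset\R^{n+s}$, and then to evaluate the resulting boundary integral on $\partial K_f^s$ as an integral on $\R^n$ by exploiting the body-of-revolution structure. The first reduction is essentially tautological: applying the identity $\vol_{n+s}(K_f^s)=\vol_s(B_2^s)\int_{\R^n}f\,dx$ simultaneously to $f$ and to the floating function $f_\delta^\Phi$ (whose associated body is $K_f^s(\Phi,\delta)$ by construction of $g^\Phi_\delta$), I would obtain
\[
\int_{\R^n}(f-f^\Phi_\delta)(x)\,dx\;=\;\frac{\vol_{n+s}(K_f^s)-\vol_{n+s}(K_f^s(\Phi,\delta))}{\vol_s(B_2^s)}.
\]
Plugging in the weighted floating body asymptotics in ambient dimension $n+s$ then reduces the claimed limit to a constant multiple of the weighted affine surface integral $\int_{\partial K_f^s}\kappa_{K_f^s}^{1/(n+s+1)}\Phi^{-2/(n+s+1)}\,d\mu_{K_f^s}$.

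The main computational step will then be to express this boundary integral as the integral on $\R^n$ appearing in the theorem. I would set $h\coloneqq f^{1/s}$ (concave on $\mathrm{supp}(f)$ by $s$-concavity) and parameterize the regular part of $\partial K_f^s$ by $(x,h(x)\theta)$ with $x\in\operatorname{int}(\mathrm{supp}(h))$ and $\theta\in S^{s-1}$. Rotational invariance splits the tangent space orthogonally into an axial $\R^n$-part and a rotational $T_\theta S^{s-1}$-part, and both fundamental forms of $\partial K_f^s$ are block-diagonal in this decomposition. The standard surface-of-revolution computation then gives
\[
d\mu_{K_f^s}=h^{s-1}\sqrt{1+\|\nabla h\|^2}\,dx\,d\sigma(\theta),\qquad \kappa_{K_f^s}=\frac{|\det\nabla^2 h(x)|}{h(x)^{s-1}\,(1+\|\nabla h(x)\|^2)^{(n+s+1)/2}},
\]
where the $n$ axial principal curvatures contribute the $\det\nabla^2 h$ factor (exactly as for the graph of $h$ in $\R^{n+1}$), while the $s-1$ equal rotational principal curvatures $1/(h\sqrt{1+\|\nabla h\|^2})$ account for the $h^{-(s-1)}$ factor. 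Since $\Phi(x,y)=\phi(x,\|y\|)$, the restricted weight equals $\phi(x,h(x))$ and is independent of $\theta$, so integrating $\theta$ out produces the factor $\vol_{s-1}(S^{s-1})$. After the matching powers of $h$ and $(1+\|\nabla h\|^2)$ collapse, the boundary integral simplifies to
\[
\vol_{s-1}(S^{s-1})\int_{\R^n}|\det\nabla^2 h|^{1/(n+s+1)}\,h^{(s-1)(n+s)/(n+s+1)}\,\phi(x,h(x))^{-2/(n+s+1)}\,dx.
\]
Substituting $h=f^{1/s}$ produces exactly the integrand of Theorem~\ref{thm:sconcave}, and collecting the numerical factors against the $\vol_s(B_2^s)$ from the first step leaves $c_{n,s}$.

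The hard part will be to justify the use of the weighted floating body asymptotics at those points of $\partial K_f^s$ where the body fails to be strictly convex or smooth. These occur along the equatorial set $\{(x,0):x\in\partial\mathrm{supp}(f)\}$, where the revolution degenerates, and on the null set where the concave function $h$ is not twice differentiable. My plan is to handle the first issue by approximating $f$ by the truncated $s$-concave functions $((h-t)_+)^s$ for small $t>0$, whose associated bodies have their relevant boundary strictly inside $\mathrm{supp}(f)$, applying the floating body asymptotics there, and then passing to the limit $t\downarrow 0$ via dominated convergence. For the second issue, Alexandrov's theorem supplies a generalized $\nabla^2 h$ a.e., which is the only regularity that enters the surface-of-revolution curvature formula. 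Finally, the integrability of the right-hand side can be established as in the rolling-ball argument sketched after Theorem~\ref{theo:f-deltafloat1} for the log concave case, now applied to the convex body $K_f^s$.
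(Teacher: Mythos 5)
Your proposal follows the same overall architecture as the paper's proof: reduce via the identity $\int_{\R^n}(f-f^\Phi_\delta)\,dx=\bigl(\vol_{n+s}(K_f^s)-\vol_{n+s}(K_f^s(\Phi,\delta))\bigr)/\vol_s(B_2^s)$, invoke the weighted floating-body asymptotics of Werner (2002) for the convex body $K_f^s\subset\R^{n+s}$, and then convert the resulting boundary integral over $\partial K_f^s$ into an integral over $\R^n$. Where you genuinely diverge is in that last conversion. The paper cites \cite[Lemma~8]{AAKSW} for a ready-made expression of $\kappa_{K_f^s}$ in terms of $f$, restricts to the ``good'' part $\tilde\partial K_f^s$ of the boundary, and evaluates a fiber integral $\int_{\R^{s-1}}dy/|y_s|$ via Euler's beta function to produce the constant $s\,\vol_s(B_2^s)$. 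You instead parameterize $\partial K_f^s$ as a body of revolution $(x,h(x)\theta)$, exploit the orthogonal block decomposition of the tangent space to compute both $d\mu_{K_f^s}=h^{s-1}\sqrt{1+\|\nabla h\|^2}\,dx\,d\sigma(\theta)$ and $\kappa_{K_f^s}=|\det\nabla^2h|\,h^{-(s-1)}(1+\|\nabla h\|^2)^{-(n+s+1)/2}$ from first principles, and integrate out $\theta$ to produce $\vol_{s-1}(S^{s-1})=s\,\vol_s(B_2^s)$ directly. Your computation is correct (the $(1+\|\nabla h\|^2)$ powers cancel and the $h$-exponents combine to $(s-1)(n+s)/(n+s+1)$ exactly as in the theorem) and it is more self-contained, at the price of redoing the surface-of-revolution differential geometry. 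The truncation $((h-t)_+)^s$ you propose to handle the equatorial degeneracy is extra care that the paper omits --- it works implicitly with $\tilde\partial K_f^s$ and relies on Werner's asymptotics holding for general convex bodies --- but your argument is a legitimate alternative justification of the same measure-theoretic negligibility.
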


Let us consider the unweighted case in which $\phi\equiv 1$, or equivalently $\psi\equiv 1$. In this situation let us write $f_\delta$ instead of $f^\Phi_\delta$. Then Theorem \ref{thm:sconcave} implies that
	\begin{align}\label{s-ASA}
	&\lim_{\delta\to 0} \frac{\int_{\R^n} f(x)-f_{\delta}(x)\, dx}{\delta^{2/(n+s+1)}} \nonumber \\
	&\qquad={s\over 2}\Bigl( \frac{n+s+1}{(n+s)\vol_{n+s}(B_2^{n+s})}\Bigr)^{\frac{2}{n+s+1}}\int_{\R^n}|\det \nabla^2f(x)^{1/s}|^{{1\over n+s+1}}f(x)^{\frac{(s-1)(n+s)}{s(n+s+1)}}\, dx.
\end{align}

If $f$ is $s$-concave we can associate with it the convex function
\begin{equation}\label{eq:psif}
\psi_f(x):=s(1-f^{1/s}(x)),\qquad x\in\R^n.
\end{equation}
For $\lambda\in\R$ the $\lambda$-affine surface area of the $s$-concave function $f$ has been introduced in \cite[Definition 2]{CFGLSW} as
\begin{equation}\label{eq:5-2-24A}
\operatorname{as}_\lambda(f) = {1\over 1+ns}\int_{\R^n}{(1-{1\over s}\psi_f(x))^{(s-1)(1-\lambda)}(\det\nabla^2\psi_f(x))^\lambda\over (1+{1\over s}(\langle x,\nabla\psi_f(x)\rangle-\psi_f(x)))^{\lambda(n+s+1)-1}}\,dx,
\end{equation}
noting that the parameter $s$ in \cite{CFGLSW} plays the role of $1/s$ in our set-up. In particular, choosing $\lambda={1\over n+2}$ we can take the limit in \eqref{eq:5-2-24A} as $s\to\infty$ to see that
\begin{align*}
	\lim_{s\to\infty}(1+ns)\operatorname{as}_{1\over n+2}(f) &= \int_{\R^n} {e^{-\psi_f(x)(1-{1\over n+2})}(\det\nabla^2\psi_f(x))^{1\over n+2}\over e^{\lambda(\langle x,\nabla\psi_f(x)\rangle-\psi_f(x))}}\,dx\\
	&= \int_{\R^n} {e^{-(\psi_f(x)+{1\over n+2}\langle x,\nabla\psi_f(x)\rangle)}}(\det\nabla^2\psi_f(x))^{1\over n+2}\,dx\\
	&=\int_{\R^n}(\det\nabla^2\psi_f(x))^{1\over n+2}\Phi_f(x,\psi_f(x))^{-{2\over n+2}}e^{-\psi_f(x)}\,dx\\
	&=\operatorname{as}_{\Phi_f}(\psi_f)
\end{align*}
with the weight function
\begin{align*}
	\Phi_f(x,\psi_f(x)) = e^{2\langle x,\psi_f(x)\rangle}. 
\end{align*}
In other words, in the limit we arrive at a particular weighted surface area of the convex function $\psi_f$ associated with $f$ as introduced in Section \ref{sec:ConvexLogConcaveResults}.

Moreover, using \eqref{eq:psif} and the fact that $\nabla\psi_f=-s\nabla f^{1/s}$ and $\det\nabla^2\psi_f(x)=\det(-s\nabla^2 f^{1/s})=s^n\det(-\nabla^2 f^{1/s})$, we see that
\begin{equation*}
\operatorname{as}_\lambda(f) = \frac{s^{n \lambda  +1}}  {n+s} \int_{\mathbb{R}^n} \frac{f^\frac{1}{s}(x)^{ (s-1)(1-\lambda)}
	(\det (-\Hess (f^\frac{1}{s} (x) )))^\lambda}
{(f^\frac{1}{s}(x) -\langle x , \nabla (f^\frac{1}{s})(x) \rangle)^{\lambda(n+s+1) - 1 }} \  dx.
\end{equation*}
In particular, choosing $\lambda=\lambda_{n,s}={1\over n+s+1}$, this reduces to
\begin{align*}
\operatorname{as}_{\lambda_{n,s}}(f) &= \frac{s^{2n+s+1\over n+s+1}}  {n+s} \int_{\mathbb{R}^n} f^\frac{1}{s}(x)^{ (s-1)(n+s)\over n+s+1}
(\det (-\Hess (f^\frac{1}{s} (x) )))^{1\over n+s+1}\  dx\\
&= \frac{s^{2n+s+1\over n+s+1}}  {n+s} \int_{\mathbb{R}^n} f^\frac{1}{s}(x)^{ (s-1)(n+s)\over n+s+1}
|\det \Hess f^\frac{1}{s} (x) |^{1\over n+s+1}\  dx
\end{align*}
which in turn coincides with \eqref{s-ASA}.
Moreover, according to \cite[Equation (31)]{CFGLSW} we have that
$$
\operatorname{as}_\lambda(f) = {s^{n/2+1}\over(n+s)\vol_{s-1}(S^{s-1})}\operatorname{as}_p(K_f^s),
$$
where on the right hand side we have the $L^p$-affine surface area of the auxiliary convex body $K_f^s$ with $p=(n+s){\lambda\over1-\lambda}$. Using this identity with $\lambda=\lambda_{n,s}$ and noting that in this case $p=1$, we conclude that
$$
\operatorname{as}_{\lambda_{n,s}}(f) = {s^{n/2+1}\over(n+s)\vol_{s-1}(S^{s-1})}\operatorname{as}(K_f^s),
$$
with $\operatorname{as}(K_f^s)=\operatorname{as}_1(K_f^s)$ being the usual affine surface area of $K_f^s$ as defined by \eqref{asK} and $S^{s-1}$ denoting the $(s-1)$-dimensional unit sphere.

The previous discussion motivates the following definition.

\begin{definition}
Let $n,s\in\mathbb{N}$ and $f:\R^n\to\R_+$ an $s$-concave function. Further, let $\Phi:\R^{n+s}\to\R_+$ be a continuous function satisfying $\Phi(x,y)=\phi(x,\|y\|)$ for some $\phi:\R^{n+1}\to\R_+$. The expression
\begin{equation}
	\label{def:assphi}
\operatorname{as}_\Phi^s(f) = 
\int_{\R^n}|\det \nabla^2f(x)^{1/s}|^{1\over n+s+1}f(x)^{\frac{(s-1)(n+s)}{s(n+s+1)}}\phi(x,f^{1/s}(x))^{-\frac{2}{n+s+1}}\, dx
\end{equation}
is called the $\phi$-weighted affine surface area of  the $s$-concave function.
\end{definition}

 \section{Proofs}\label{proofs}
 
 \subsection {Proof of Theorems \ref{theo:f-deltafloat2}  and  \ref{theo:f-deltafloat1}, and Propositions \ref{corollary1} and \ref{corollary2}} 
 
We follow the proof of the main theorem of \cite{LiSchuettWerner}, where the results have been obtained for the unweighted case $\Phi \equiv 1$. As a preparation, we need several lemmas.  These  lemmas and their proofs are  the analogous ones of \cite{LiSchuettWerner}. The first lemma is  well known, see, e.g., \cite{SchuettWerner2003}.  

 \begin{lemma}\label{lemma:cap}
 Let $a_1,\ldots,a_n>0$,
$$
\mathcal{E}=\left\{x\in\mathbb{R}^n:
\ \sum_{i=1}^{n}\left|\frac{x_{i}}{a_{i}}\right|^{2}\leq
1 \right\}.
$$ 
and let $H_h=H((a_{n}-h)e_{n},e_{n})$. Then for all
$h\leq a_{n}$,
\begin{eqnarray*}
h^\frac{n+1}{2} \ \left( 1- \frac{h}{2a_n}\right)^\frac{n-1}{2} \  
\leq \  
\frac{ (n+1) \  a_n^\frac{n-1}{2}\   \vol_{n}(\mathcal{E}\cap H_h^{-})}{2^\frac{n+1}{2}
\vol_{n-1}(B_{2}^{n-1})\  \prod_{i=1}^{n-1}a_{i}}\  
\leq\  
h^\frac{n+1}{2}.
\end{eqnarray*}
In particular, if $\mathcal{E}= r B^n_2$ is a Euclidean ball  with radius  $r$  in $\mathbb{R}^n$, then for all $u \in S^{n-1}$, for $h \leq r$ and $H_h=H((r-h)u, u )$, 
\begin{eqnarray*}
 h^\frac{n+1}{2}  \left( 1-\frac{ h}{2r} \right)  \  \leq \   \frac{(n+1)\   \vol_{n}\left(rB^n_2\cap H_h^{-}\right)}{2^\frac{n+1}{2} \ \vol_{n-1}\left(B^{n-1}_2\right) \  r^\frac{n-1}{2}}
  \leq 
  \ h^\frac{n+1}{2}. 
\end{eqnarray*}
 \end{lemma}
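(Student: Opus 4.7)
The plan is the standard ellipsoid-to-ball reduction followed by a one-dimensional Cavalieri calculation and elementary monotonicity bounds. There is essentially no real obstacle here; the whole content of the lemma is in the exponent $(n+1)/2$, which appears as soon as one recognizes the cap as a solid of revolution with an $(n-1)$-ball cross-section whose radius squared vanishes linearly at the apex.

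First I would use the diagonal linear map $T\colon\R^n\to\R^n$, $T(x_1,\ldots,x_n)=(x_1/a_1,\ldots,x_n/a_n)$, which sends $\mathcal{E}$ to $B_2^n$ and has Jacobian determinant $(\prod_i a_i)^{-1}$. Under $T$ the hyperplane $H_h=H((a_n-h)e_n,e_n)$ is mapped to the hyperplane $\{y_n=1-h/a_n\}$, so the cap of $\mathcal{E}$ is affinely equivalent to the spherical cap of $B_2^n$ of height $h':=h/a_n$ cut off in the $e_n$-direction; concretely
\[
\vol_n\bigl(\mathcal{E}\cap\{y:y_n\ge a_n-h\}\bigr)=\Bigl(\prod_{i=1}^n a_i\Bigr)\cdot\vol_n\bigl(B_2^n\cap\{y_n\ge 1-h'\}\bigr).
\]

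Next I would evaluate the ball-cap volume. The slice of $B_2^n$ at height $y_n=t$ is an $(n-1)$-ball of radius $\sqrt{1-t^2}$, so after the substitution $u=1-t$ one has $1-t^2=u(2-u)$ and hence
\[
\vol_n\bigl(B_2^n\cap\{y_n\ge 1-h'\}\bigr)=\vol_{n-1}(B_2^{n-1})\int_0^{h'}u^{(n-1)/2}(2-u)^{(n-1)/2}\,du.
\]
The factor $u^{(n-1)/2}$ integrates to $\tfrac{2}{n+1}(h')^{(n+1)/2}$ and provides the $h^{(n+1)/2}$ asymptotic; the factor $(2-u)^{(n-1)/2}$ is the correction. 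Using the trivial monotone bounds
\[
(2-h')^{(n-1)/2}\le(2-u)^{(n-1)/2}\le 2^{(n-1)/2}\qquad\text{for } u\in[0,h'],
\]
I would immediately obtain
\[
\tfrac{2^{(n+1)/2}\vol_{n-1}(B_2^{n-1})}{n+1}(h')^{(n+1)/2}\bigl(1-\tfrac{h'}{2}\bigr)^{(n-1)/2}\le \vol_n(\text{cap})\le \tfrac{2^{(n+1)/2}\vol_{n-1}(B_2^{n-1})}{n+1}(h')^{(n+1)/2}.
\]

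Finally, inserting $h'=h/a_n$, multiplying by $\prod_{i=1}^n a_i$ and absorbing the factor $a_n$ into the denominator (since $(h/a_n)^{(n+1)/2}\prod_i a_i=h^{(n+1)/2}\prod_{i=1}^{n-1}a_i\cdot a_n^{-(n-1)/2}$) rearranges the inequality into the claimed form. The particular case $a_1=\cdots=a_n=r$ specializes to the ball statement. The only mildly delicate point is this bookkeeping of the Jacobian and the rescaled height $h/a_n$ against the normalizing prefactor $\tfrac{(n+1)a_n^{(n-1)/2}}{2^{(n+1)/2}\vol_{n-1}(B_2^{n-1})\prod_{i=1}^{n-1}a_i}$, and once this matching is written out the two inequalities are read off directly from the two-sided bound on $(2-u)^{(n-1)/2}$.
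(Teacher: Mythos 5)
Your argument for the ellipsoid bound is correct and is the standard one: rescale to $B_2^n$ via the diagonal map with Jacobian $(\prod_i a_i)^{-1}$, write the spherical cap of height $h'=h/a_n$ as the Cavalieri integral $\vol_{n-1}(B_2^{n-1})\int_0^{h'}(u(2-u))^{(n-1)/2}\,du$, and bound $(2-u)^{(n-1)/2}$ from above by $2^{(n-1)/2}$ and from below by $(2-h')^{(n-1)/2}=2^{(n-1)/2}(1-h'/2)^{(n-1)/2}$; the bookkeeping against the prefactor is as you describe. The paper does not reprove the lemma (it is quoted as known from the cited source), and your derivation is the expected one.

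One point you should not gloss over: you assert that the ball case ``specializes'' by setting $a_1=\cdots=a_n=r$. That specialization yields the lower-bound factor $\bigl(1-\tfrac{h}{2r}\bigr)^{(n-1)/2}$, whereas the lemma's ``in particular'' clause displays exponent $1$. For $n\leq 3$ the exponent-$1$ form is weaker and does follow, but for $n\geq 4$ it is strictly stronger and is in fact false: with $n=4$ and $h'=h/r$ small, $\int_0^{h'}(u(2-u))^{3/2}\,du=\tfrac{2^{5/2}}{5}(h')^{5/2}-\tfrac{3\cdot2^{3/2}}{14}(h')^{7/2}+O((h')^{9/2})$, while $\tfrac{2^{5/2}}{5}(h')^{5/2}\bigl(1-\tfrac{h'}{2}\bigr)$ has $(h')^{7/2}$-coefficient $-\tfrac{2^{3/2}}{5}$, and $\tfrac{3}{14}>\tfrac{1}{5}$, so the displayed ball lower bound fails for small $h'$. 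This points to a misprint in the statement rather than a gap in your argument: where the lemma is actually invoked, e.g.\ in the estimate \eqref{delta1} inside the proof of Lemma~\ref{lemma:f-delta}, the lower-bound factor carries exponent $n/2$, i.e.\ $(\dim-1)/2$, exactly as in your version and in the ellipsoid statement. So your proof establishes the lemma in the form the paper uses, but you should say explicitly that the ball lower bound should carry exponent $(n-1)/2$, not $1$, rather than claiming the literal ``in particular'' line follows by specialization.
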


 \begin{lemma}\label{lemma:f-delta}
 Let $\psi \in \text{Con}(\mathbb{R}^n)$ and $\Phi$ be a continuous, strictly positive function on $\mathbb{R}^{n+1}$. 
\begin{itemize}
\item[(i)] Let $x \in \mathbb R^{n}$ be such that the Hessian $\nabla^2 \psi$ at $x$  is positive definite. Then there are constants $\beta_1$ and $\beta_2$ such that  for all $\varepsilon >0$ there is $\delta_0= \delta_0(x, \varepsilon)$ such that for all $\delta \leq \delta_0$, 
\begin{eqnarray*}
	&&\hskip -5mm  (1- \beta_2 \varepsilon) \ 
	c_{n+1} \left(\det\left( \nabla^2 \psi (x)\right) \right)^\frac{1}{n+2} \Phi((x, \psi(x))^{-\frac{2}{n+2}} \, \leq \,  \frac{\psi^\Phi_\delta (x) - \psi(x) }{\delta^{2/(n+2)} }  \\
	&& \hskip 25mm \leq(1+ \beta_1 \varepsilon) \ 
	c_{n+1}  \left(\det\left( \nabla^2 \psi (x)\right) \right)^\frac{1}{n+2} \Phi((x, \psi(x))^{-\frac{2}{n+2}}.
\end{eqnarray*}
Consequently, for $f = e^{-\psi}$ we get with (new)  constants $\beta_1$ and $\beta_2$
\begin{eqnarray*}
	&&\hskip -5mm (1- \beta_2 \varepsilon)   f(x) \  c_{n+1}  \left(\det\left( \nabla^2\psi (x)\right) \right)^\frac{1}{n+2} \Phi((x, \psi(x))^{-\frac{2}{n+2}} \leq  \frac{ f(x) - f^\Phi_\delta (x) }{\delta^{2/(n+2)} }  \\ 
	&& \hskip 25mm  \leq(1+ \beta_1 \varepsilon)   f(x) \  c_{n+1}  \left(\det\left( \nabla^2\psi(x) \right) \right)^\frac{1}{n+2} \Phi((x, \psi(x))^{-\frac{2}{n+2}}.
\end{eqnarray*}
In particular, when $\Phi=\Phi_e$, we get that for all $\varepsilon >0$ there is $\delta_0= \delta_0(x, \varepsilon)$ such that for all $\delta \leq \delta_0$, 
\begin{eqnarray*}
	&&\hskip -5mm  (1- \beta_2 \varepsilon) \ 
	c_{n+1} \left(\det\left( \nabla^2 \psi (x)\right) \right)^\frac{1}{n+2} \, e^{\frac{2}{n+2} \psi(x) } \, \leq \,  \frac{\psi^e_\delta (x) - \psi(x) }{\delta^{2/(n+2)} }  \\
	&& \hskip 25mm \leq(1+ \beta_1 \varepsilon) \ 
	c_{n+1}  \left(\det\left( \nabla^2 \psi (x)\right) \right)^\frac{1}{n+2} \, e^{\frac{2}{n+2} \psi(x) },
\end{eqnarray*}
and for $f = e^{-\psi}$ we get with (new)  constants $\beta_1$ and $\beta_2$
\begin{eqnarray*}
	&&\hskip -5mm (1- \beta_2 \varepsilon)   f(x) \  c_{n+1}  \left(\det\left( \nabla^2\psi (x)\right) \right)^\frac{1}{n+2} \, e^{\frac{2}{n+2} \psi(x) } \leq  \frac{ f(x) - f^e_\delta (x) }{\delta^{2/(n+2)} }   \\ 
	&& \hskip 25mm  \leq(1+ \beta_1 \varepsilon)   f(x) \  c_{n+1}  \left(\det\left( \nabla^2\psi(x) \right) \right)^\frac{1}{n+2} \, e^{\frac{2}{n+2} \psi(x) }.
\end{eqnarray*}

\item[(ii)] Let $x \in \mathbb R^{n}$ be such that $ \det\left(\nabla^2 \psi(x)\right) =0$. Then for all $\varepsilon >0$ there is $\delta_0= \delta_0(x, \varepsilon)$ such that for all $\delta \leq \delta_0$, 
\begin{eqnarray*}
	0 \leq \frac{\psi^\Phi_\delta (x) - \psi(x) }{\delta^{2/(n+2)} }  \leq \varepsilon.
\end{eqnarray*}
Consequently,  for $f = e^{-\psi}$ we get for all $\varepsilon >0$ that there is $\delta_0= \delta_0(x, \varepsilon)$ such that for all $\delta \leq \delta_0$, 
\begin{eqnarray*}
	0 \leq \frac{f(x) - f^\Phi_\delta (x) }{\delta^{2/(n+2)} }  \leq \varepsilon.
\end{eqnarray*}
In particular, when $\Phi=\Phi_e$, we get that 
\begin{eqnarray*}
	0 \leq \frac{\psi^e_\delta (x) - \psi(x) }{\delta^{2/(n+2)} }  \leq \varepsilon  \hskip 5mm \text{and} \hskip 5mm 0 \leq \frac{f(x) - f^e_\delta (x) }{\delta^{2/(n+2)} }  \leq \varepsilon.
\end{eqnarray*}
\end{itemize}
 \end{lemma}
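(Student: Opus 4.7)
The plan is to follow the blueprint of \cite{LiSchuettWerner} (where the case $\Phi\equiv 1$ was treated) and to propagate the weight through the cap-volume calculation. The first step is a normalization by the volume-preserving affine shear $(y,t)\mapsto (y-x,\,t-\psi(x)-\langle \nabla\psi(x),y-x\rangle)$ of $\R^{n+1}$, which transforms $\operatorname{epi}(\psi)$ into the epigraph of a function $\tilde\psi$ satisfying $\tilde\psi(0)=0$, $\nabla \tilde\psi(0)=0$, and $\nabla^2\tilde\psi(0)=A:=\nabla^2\psi(x)$, and sends $\Phi$ to a continuous positive weight $\tilde\Phi$ with $\tilde\Phi(0,0)=\Phi(x,\psi(x))$. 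By Alexandrov--Busemann--Feller and continuity of $\Phi$, for any $\varepsilon>0$ there is a neighborhood of the origin on which
\[
(1-\varepsilon)\tfrac12\langle Ay,y\rangle\;\leq\;\tilde\psi(y)\;\leq\;(1+\varepsilon)\tfrac12\langle Ay,y\rangle
\]
holds, together with $(1-\varepsilon)\tilde\Phi(0,0)\leq\tilde\Phi\leq(1+\varepsilon)\tilde\Phi(0,0)$.

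Next, I would analyze the cap whose $\tilde\Phi$-volume equals $\delta$. By Proposition \ref{properties}, the height $h:=\psi^\Phi_\delta(x)-\psi(x)=\tilde\psi^\Phi_\delta(0)$ is characterized by the existence of a supporting hyperplane $H$ to the floating epigraph at $(0,h)$ whose cap $H^-\cap\operatorname{epi}(\tilde\psi)$ has $\tilde\Phi$-volume $\delta$, with $H$ touching the floating set precisely at the $\tilde\Phi$-barycenter of $H\cap\operatorname{epi}(\tilde\psi)$. The quadratic symmetry of the Taylor approximation (made rotationally symmetric by the linear change $y\mapsto A^{-1/2}y$, whose Jacobian contributes the factor $(\det A)^{-1/2}$) forces this barycenter to lie above the origin up to a correction that vanishes with $\delta$, so $H$ is nearly horizontal and the tilt contributes only lower-order terms in $h$. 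Sandwiching the cap between two paraboloidal caps and invoking Lemma \ref{lemma:cap} applied to a unit ball in $\R^{n+1}$ then yields
\[
\vol_{n+1}(\mathrm{cap})\;=\;(1\pm O(\varepsilon))\,\frac{2^{n/2+1}\vol_n(B_2^n)}{(n+2)\sqrt{\det A}}\,h^{(n+2)/2}.
\]

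Multiplying by the two-sided bound on $\tilde\Phi$ and equating the result to $\delta$ gives
\[
\delta\;=\;(1\pm O(\varepsilon))\,\Phi(x,\psi(x))\,\frac{2^{n/2+1}\vol_n(B_2^n)}{(n+2)\sqrt{\det A}}\,h^{(n+2)/2},
\]
and solving for $h/\delta^{2/(n+2)}$ produces the bound claimed in part (i), after using $c_{n+1}=\tfrac12((n+2)/\vol_n(B_2^n))^{2/(n+2)}$ and absorbing $(1\pm O(\varepsilon))^{2/(n+2)}=1\pm O(\varepsilon)$ into the constants $\beta_1,\beta_2$. The version for $f=e^{-\psi}$ follows by writing $f(x)-f^\Phi_\delta(x)=e^{-\psi(x)}(1-e^{-h})=e^{-\psi(x)}\,h\,(1+O(h))$ and using $h\to 0$ as $\delta\to 0$. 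The exponential-weight case $\Phi=\Phi_e$ is identical, since at the fixed base point $\Phi_e$ is continuous and strictly positive with $\Phi_e(x,\psi(x))^{-2/(n+2)}=e^{(2/(n+2))\psi(x)}$.

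For part (ii), with $\det A=0$, I would compare $\operatorname{epi}(\psi)$ to the epigraph of the perturbation $\rho(y):=\psi(y)+\tfrac{\varepsilon'}{2}\|y-x\|^2$, which satisfies $\rho\geq\psi$ and hence $\operatorname{epi}(\rho)\subseteq\operatorname{epi}(\psi)$; moreover $\rho(x)=\psi(x)$ and $\nabla^2\rho(x)=A+\varepsilon' I$ is positive definite. Since every hyperplane whose $\Phi$-cap of $\operatorname{epi}(\psi)$ has volume at most $\delta$ automatically has $\Phi$-cap of $\operatorname{epi}(\rho)$ of volume at most $\delta$, one obtains $\rho^\Phi_\delta(x)\geq\psi^\Phi_\delta(x)$; part (i) applied to $\rho$ then yields
\[
0\;\leq\;\frac{\psi^\Phi_\delta(x)-\psi(x)}{\delta^{2/(n+2)}}\;\leq\;\frac{\rho^\Phi_\delta(x)-\rho(x)}{\delta^{2/(n+2)}}\;\leq\;2c_{n+1}\det(A+\varepsilon' I)^{1/(n+2)}\Phi(x,\psi(x))^{-2/(n+2)}
\]
for $\delta$ sufficiently small, and letting $\varepsilon'\to 0$ makes the right-hand side smaller than any preassigned $\varepsilon$; the analogous statement for $f=e^{-\psi}$ follows via the elementary bound $1-e^{-h}\leq h$. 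The main obstacle is the rigorous control of the approximation errors in the cap-volume step, and especially of the tilt of the supporting hyperplane $H$ away from horizontal, uniformly in $\delta$; this is precisely where the \cite{LiSchuettWerner} machinery is invoked, the novelty here being only that one carries the weight factor $\Phi(x,\psi(x))^{-2/(n+2)}$ through an otherwise identical sandwich argument.
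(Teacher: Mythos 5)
Your proof of part (i) follows essentially the same path as the paper: localize $\Phi$ by continuity near the base point, approximate $\partial\operatorname{epi}(\psi)$ by a quadric, compute the weighted cap volume via Lemma \ref{lemma:cap}, solve for the height, and defer the control of the tilt of the supporting hyperplane to the Sch\"utt--Werner cap machinery (Lemma 11 of \cite{SchuettWerner1990}). The shear normalization and the change of variable $y\mapsto A^{-1/2}y$ are only a convenient choice of coordinates; the paper works directly with the osculating ellipsoid $\mathcal{E}$ at $z_{x_0}$ together with the conversion $\Delta_\delta\leq\langle N_\psi(z_{x_0}),e_{n+1}\rangle\bigl(\psi^\Phi_\delta(x_0)-\psi(x_0)\bigr)$ via \eqref{normal}. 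One small slip: Lemma \ref{lemma:cap} bounds ellipsoidal caps and is applied to the flanking ellipsoids $\mathcal{E}(\varepsilon^\pm)$, not to a paraboloid.

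Where you diverge substantively is part (ii). The paper splits into two subcases (floating point on $\partial\operatorname{epi}(\psi)$ versus in the interior) and in the second relies on the elliptic-cylinder structure of the Dupin indicatrix, approximating $\operatorname{epi}(\psi)$ from inside by an ellipsoid with $k$ very long principal axes to obtain a factor $\varepsilon^{2k/(n+2)}$ from the cap-volume estimate. You instead regularize: pass to $\rho(y)=\psi(y)+\tfrac{\varepsilon'}{2}\|y-x\|^2$, note that $\operatorname{epi}(\rho)\subseteq\operatorname{epi}(\psi)$ forces $\operatorname{epi}(\rho)^\Phi_\delta\subseteq\operatorname{epi}(\psi)^\Phi_\delta$ and hence $\psi^\Phi_\delta(x)\leq\rho^\Phi_\delta(x)$, apply the already-established part (i) to $\rho$ where $\nabla^2\rho(x)=A+\varepsilon' I$ is positive definite, and choose $\varepsilon'$ before $\delta$ so that $2c_{n+1}\det(A+\varepsilon'I)^{1/(n+2)}\Phi(x,\psi(x))^{-2/(n+2)}<\varepsilon$, which is possible because $\det A=0$. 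This is a genuinely different and arguably cleaner route: it bypasses the Dupin indicatrix machinery from \cite{SchuettWerner2004} and absorbs the paper's two subcases into one uniform argument. What deserves to be spelled out is the monotonicity of the weighted floating operation in the body: if a hyperplane cuts a cap of $\Phi$-volume at most $\delta$ from $\operatorname{epi}(\psi)$, it cuts one of at most the same $\Phi$-volume from the smaller $\operatorname{epi}(\rho)$, and any hyperplane that misses $\operatorname{epi}(\rho)$ entirely still has $\operatorname{epi}(\rho)$ in the retained half-space; together these give $\operatorname{epi}(\rho)^\Phi_\delta\subseteq\operatorname{epi}(\psi)^\Phi_\delta$.
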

\begin{proof}
Let $\varepsilon >0$ be given and let 
$x_0 \in \mathbb R^{n}$.  We put $z_{x_0}=(x_0, \psi(x_0))$ and  denote by $N_\psi(z_{x_0})$ the outer unit normal in $z_{x_0}$  to the surface described by $\psi$.  As recalled above, $N_\psi(z_{x_0})$ exists uniquely for almost all $x_0$.

To prove part (i) of the lemma, let $x_0$ be such that the Hessian $\nabla^2 \psi (x_0)$  is positive definite. Then, 
locally around $z_{x_0}$, the graph of $\psi$ can be approximated by an ellipsoid $\mathcal{E}$. We  make this precise. Let $\mathcal{E}$ be such that the lengths of its principal axes are $a_1, \dots, a_{n+1}$ and such that its center 
is at $z_{x_0} - a_{n +1} N_\psi(z_{x_0})$.  Let $\mathcal{E} (\varepsilon^-)$ be the ellipsoid centered at $z_{x_0}-  a_{n+1} N_\psi(z_{x_0})$ 
whose principal axes  coincide with the ones of $\mathcal{E}$,  but have lengths $(1-\varepsilon) a_1, \dots, (1-\varepsilon) a_{n}, a_{n+1}$. Similarly, let $\mathcal{E} (\varepsilon^+)$ be the ellipsoid centered at $z_{x_0}-  a_{n+1} N_\psi(z_{x_0})$, with the same principal axes as $\mathcal{E}$,  but with lengths $(1+\varepsilon) a_1, \dots, (1+\varepsilon) a_{n}, a_{n+1}$.
Then,
$$ z_{x_0} \in \partial \mathcal{E}  \hskip 3mm \text { and } \hskip 3mm N_{\mathcal{E}}(z_{x_0}) = N_\psi(z_{x_0}),$$
and (see, e.g., \cite{SchuettWerner2003})  there exists a $\Delta_\varepsilon >0$ 
such that 
\begin{eqnarray}\label{ellipse}
&& \hskip -10mm H^-\left( z_{x_0} - \Delta_\varepsilon N_\psi(z_{x_0}), N_\psi(z_{x_0})\right)  \  \cap  \  \mathcal{E} (\varepsilon^-) \nonumber \\
&&  \subseteq  H^-\left( z_{x_0} - \Delta_\varepsilon N_\psi(z_{x_0}), N_\psi(z_{x_0})\right) \  \cap \   \{(x,y):   y \geq \psi(x)\} \nonumber \\
&& \hskip 10mm \subseteq H^-\left( z_{x_0} - \Delta_\varepsilon N_\psi(z_{x_0}), N_\psi(z_{x_0})\right) \  \cap  \  \mathcal{E} (\varepsilon^+) .
\end{eqnarray}
For  $\delta \geq 0$, let $z_\delta^\Phi= (x_0, \psi_\delta^\Phi(x_0))$. We choose $\delta$ so small that  for all support hyperplanes $H(z_\delta^\Phi)$ to $\left(\operatorname{epi}(\psi)\right)_\delta^\Phi$  through $z_\delta^\Phi$ we have
$$
  H(z_\delta^\Phi) ^{-}  \cap  \mathcal{E} (\varepsilon^-) \subseteq   H^-\left( z_{x_0} - \Delta_\varepsilon N_\psi(z_{x_0}), N_\psi(z_{x_0})\right)  \  \cap  \  \mathcal{E} (\varepsilon^-) .
$$
Let $\Delta_{\delta}$ be such that
\begin{equation}\label{DefDeltadelta}
H(z_{x_{0}}-\Delta_{\delta}N_{\psi}(z_{x_{0}}),N_{\psi}(z_{x_{0}}))
\end{equation}
is a supporting hyperplane to $\operatorname{epi}(\psi_{\delta}^\Phi)$.
We choose $\delta$ so small that $\Delta_\delta \leq \Delta_\varepsilon$ of  \eqref{ellipse}.
As $\Phi$ is continuous on $\mathbb{R}^{n+1}$, there is a neighborhood $U$ of $z_{x_0}$ such that for all $z \in U$,
\begin{equation}\label{stetig}
(1-\varepsilon) \Phi(z_{x_0}) \leq \Phi(z) \leq (1+\varepsilon) \Phi(z_{x_0}).
\end{equation}
We choose $\delta$ so small that  for all support hyperplanes $H(z_\delta^\Phi)$ to $\left(\operatorname{epi}(\psi)\right)_\delta^\Phi$  through $z_\delta^\Phi$ we have
$$
  H(z_\delta^\Phi) ^{-}  \cap  \mathcal{E} (\varepsilon^-) \subseteq   U.
$$
 As $\partial \operatorname{epi}(\psi)$ is approximated by the boundary of an ellipsoid in $z_{x_0}$, we have that $z_\delta^\Phi \in \text{int}(\operatorname{epi}(\psi))$.  
 Thus we get 
by definition of $\left(\operatorname{epi}(\psi)\right)_\delta^\Phi$, respectively $\psi_\delta^\Phi$, by Proposition \ref{properties} and Lemma \ref{lemma:cap},
\begin{eqnarray*}
\delta &\leq&  \int_{H\left( z_{x_0} - \Delta_\delta N_\psi(z_{x_0}), N_\psi(z_{x_0} )\right)^- \, \cap \,\operatorname{epi}(\psi)} \Phi(z) dz \\
&\leq&
(1+\varepsilon) \Phi(z_{x_0})\,  \vol_{n+1} \left(H\left( z_{x_0} - \Delta_\delta N_\psi(z_{x_0}), N_\psi(z_{x_0})\right)^- \,   \cap \, \operatorname{epi}(\psi)   \right) \\
&\leq & (1+\varepsilon) \Phi(z_{x_0}) \vol_{n+1} \left(H\left( z_{x_0} - \Delta_\delta N_\psi(z_{x_0}), N_\psi(z_{x_0})\right) ^-  \cap \   \mathcal{E} (\varepsilon^+) \right)  \\ 
&\leq& (1+\varepsilon)^{n+1} \,  \Phi(z_{x_0})   \frac{2^\frac{n+2}{2} \vol_n(B^n_2)} {n+2}  \   \prod_{i=1}^n \frac{a_i}{\sqrt{a_{n+1}}}  \ \Delta_\delta ^\frac{n+2}{2}.
\end{eqnarray*} 
As
$\kappa_{\psi} (z_{x_0}) =  \prod_{i=1}^n \frac{a_{n+1}}{a_i^2}$ (see, e.g., \cite{SchuettWerner2003}),  \eqref{curvature} yields
\begin{eqnarray}\label{below1}
\Delta_\delta \geq  \frac{c_{n+1} }{(1+ \varepsilon)^{2 \frac{n+1}{n+2}}} \   \frac{\left(\det \nabla^2 \psi(x_0) \right)^\frac{1}{n+2}}{\left(1 + \|\nabla \psi(x_0)\|^2\right)^\frac{1}{2} } \,  
\frac{\delta^{2/(n+2)}}{\Phi(z_{x_0})^\frac{2}{n+2}} , 
\end{eqnarray}
where $c_{n+1}$ is as given by \eqref{cn+1}.  By \eqref{DefDeltadelta}
$$
\Delta_\delta \leq  \langle N_{\psi} (z), e_{n+1}\rangle \left( \psi^\Phi_\delta(x_0)  - \psi(x_0)\right) .
$$
Therefore, with \eqref{normal},
$$
\Delta_\delta \leq  \frac{\psi^\Phi_\delta(x_0)  - \psi(x_0)}{(1+\|\nabla \psi(x_0)\|^2)^{\frac{1}{2}}} 
$$
and thus with \eqref{below1},
\begin{equation}\label{below2} 
\psi^\Phi_\delta(x_0)  - \psi(x_0)  \geq \frac{c_{n+1} }{(1+ \varepsilon)^{2 \frac{n+1}{n+2}}} \   \left(\det \nabla^2 \psi (x_0)\right)^\frac{1}{n+2}\, \frac{ \delta^{2/(n+2)}}
{\Phi(z_{x_0})^\frac{2}{n+2}}.
\end{equation}

Now we estimate $\delta$ from below. 
As $\nabla^2 \psi (x_0)$  is positive definite, we have that for sufficiently small $\delta$,  $z_\delta^\Phi \in  {\rm int} (\operatorname{epi}(\psi))$. 
By Proposition \ref{properties}, \eqref{stetig} and \eqref{ellipse} there exists a hyperplane $H_\delta$ such that 
\begin{eqnarray} \label{unten}
\delta &=& \int_{H_\delta^- \cap  \operatorname{epi}(\psi)} \Phi(z) dz  \geq (1-\varepsilon) \Phi(z_{x_0}) 
\vol_{n+1}\left( H_\delta^- \  \cap \   \operatorname{epi}(\psi)  \right) \nonumber\\
&\geq& (1-\varepsilon) \Phi(z_{x_0})\vol_{n+1}\left( H_\delta^- \  \cap \   \mathcal{E} (\varepsilon^-) \right).
\end{eqnarray}
The expression $\vol_{n+1}\left( H_\delta^- \  \cap \   \mathcal{E} (\varepsilon^-) \right)$ is invariant under affine transformations with determinant $1$. We apply an affine transformation that 
maps $\mathcal {E} (\varepsilon^-)$ into a Euclidean ball with radius 
\begin{equation} \label{radius}
r= (1- \varepsilon) \  \left(\frac{1} {\kappa_{\psi} (z_{x_0})} \right)^ \frac{1}{n}.
\end{equation}
Now we use   Lemma 11 of \cite{SchuettWerner1990} and note that $z_{x_0}$ corresponds to $0$ of Lemma 11, that $z_\delta$ corresponds to $z$ and that $N_\psi(z_{x_0})$
corresponds to $N(0)=(0, \ldots, 0,-1)$. We choose $\varepsilon <\varepsilon_0$, where  $\varepsilon_0$ is given by Lemma 11,  and we choose $\delta$ so small that 
$\psi^\Phi_\delta(x_0) - \psi(x_0) = \|z^\phi_\delta - z(x_0)\| \leq \varepsilon <\varepsilon_0$.
By  Lemma 11 (iii)  of \cite{SchuettWerner1990},
\begin{eqnarray*}
\vol_{n+1}\left( H_\delta^- \  \cap \   \mathcal{E} (\varepsilon^-) \right) &=& \vol_{n+1}\left( H_\delta^- \  \cap \  B^{n+1}_2\left( z_{x_0}- r\  N_\psi(z_{x_0}),  r\  \right)\right) \\
& \geq& \eta(\gamma)^{-n} \  \vol_{n+1}\left(C( r, d_0 (1- c(\eta(\gamma)-1)))\right),
\end{eqnarray*}
where $c$ is an absolute constant, $C( r, d_0 (1- c(\eta(\gamma)-1)))$ is the cap of the $(n+1)$-dimensional Euclidean ball $B^{n+1}_2\left( z_{x_0}- r\  N_\psi(z_{x_0}),  r\  \right)$ of height $d_0 (1- c(\eta(\gamma)-1)))$
and $d_0$ is the  distance from $z_\delta$ to the boundary of $B^{n+1}_2\left( z_{x_0}- r\  N_\psi(z_{x_0}),  r\  \right)$. 
$\gamma = 4\sqrt{2rd_0}$ and $\eta$ is  a monotone function on $\mathbb{R}^+ $ such that $\lim_{t \rightarrow 0} \eta(t) =1$.
Thus, by \eqref{unten} and Lemma \ref{lemma:cap}, 
\begin{align}\label{delta1}
\delta 
&\geq (1-\varepsilon) \Phi(z_{x_0})\,   \frac{2^\frac{n+2}{2} \vol_{n}(B^n_2)} {(n+2)\  \eta(\gamma)^n}  r^\frac{n}{2}   \nonumber \\
&\qquad  \times  \left(d_0 (1- c(\eta(\gamma)-1)))\right)^\frac{n+2}{2}  
\left(1- \frac{d_0 (1- c(\eta(\gamma)-1)) }{2r}\right)^\frac{n}{2}.
\end{align}
We apply Lemma 11 (ii)  of \cite{SchuettWerner1990} next and note that $z_n$ of Lemma 11  corresponds to  
$z_n= \langle e_{n+1}, N_\psi(z_{x_0}) \rangle   \left(\psi^\Phi_\delta(x_0) - \psi(x_0) \right)$ in our case and $\frac{\xi }{\|\xi\| }= e_{n+1}$. 
Then by Lemma 11 (ii), 
\begin{equation}\label{d0}
d_0 \leq  \langle e_{n+1}, N_\psi(z_{x_0}) \rangle \  \left(\psi^\Phi_\delta(x_0) - \psi(x_0) \right) \leq d_0 + \frac{2 d_0^2}{r  \ \left| \langle e_{n+1}, N_{\psi}(z_{x_0})\rangle\right|^2}.
\end{equation}
Thus  we get  for sufficiently small $\delta$,  with an absolute constant $\beta_1$, that 
\begin{equation}\label{eta1}
\eta(\gamma) = \eta (4\sqrt{2rd_0})  \leq 1+\beta_1 \varepsilon
\end{equation}
and hence 
\begin{equation}\label{eta2}
1 - c (\eta(\gamma) -1) \geq 1 - \beta_2 \varepsilon, 
\end{equation}
with an  absolute constant $\beta_2$.
It follows from \eqref{d0} that 
$$
d_0 \geq \langle e_{n+1}, N_\psi(z_{x_0}) \rangle   \left(\psi^\Phi_\delta(x_0) - \psi(x_0) \right) 
\left( 
1-\frac{2\  \left(\psi^\Phi_\delta(x_0) - \psi(x_0)\right)}{r\  \left( \langle e_{n+1}, N_\psi(z_{x_0}) \rangle \right)} 
\right).
$$
We conclude with \eqref{normal}, \eqref{delta1},  \eqref{d0},  \eqref {eta1}  and \eqref {eta2}  that with (new) absolute constants $\beta_1, \beta_2$, 
\begin{eqnarray*}
&&\delta^{2/(n+2)} 
\geq\\
&& \frac{ 1-\beta_2\   \varepsilon}{(1+\beta_1\   \varepsilon)^\frac{2n}{n+2}}\, \Phi(z_{x_0})^\frac{2}{n+2}  \,  \frac{r^\frac{n}{n+2}}{c_{n+1}}\  \frac{\left(\psi^\Phi_\delta(x_0) - \psi(x_0) \right)}{(1+\|\nabla \psi(x_0)\|^2)^{\frac{1}{2}}} \left(1- 2 \frac{\psi^\Phi_\delta(x_0) - \psi(x_0) }{ r  (1+\|\nabla \psi (x_0)\|^2)^{\frac{1}{2}}}\right) ^{2\frac{n+1}{n+2}}.
\end{eqnarray*}
For  $\delta$  small enough, 
\eqref{curvature} and \eqref{radius} give, with (new) absolute constants $\beta_1, \beta_2$,
\begin{eqnarray}\label{below3}
\psi^\Phi_\delta(x_0)  - \psi(x_0)  \leq \frac{(1+ \beta_1\ \varepsilon)^\frac{2n}{n+2} }{(1- \beta_2\ \varepsilon)^{2\frac{n+1}{n+2}}}  \ c_{n+1} \  \left(\det \nabla^2 \psi(x_0)\right)^\frac{1}{n+2}  \,  \frac{\delta^{2/(n+2)}}{\Phi(z_{x_0})^\frac{2}{n+2}}.
\end{eqnarray}
This completes the proof of part (i).

For (ii), we assume that $\det \left(\nabla^2 \psi(x_0)\right) =0$.  Suppose first that there is $\delta_0$ such that $z^\Phi_{\delta_0} \in \partial \operatorname{epi}(\psi)$. Then 
$z^\Phi_\delta   \in \partial \operatorname{epi}(\psi)$ for all $\delta \leq \delta_0$.  As $z^\Phi_\delta = (x_0, \psi^\Phi_\delta (x_0))$ and $z_{x_0} = (x_0, \psi(x_0))$, we thus get  that  $\psi^\Phi_\delta(x_0)  = \psi(x_0)$ for all $\delta \leq \delta_0$, 
and hence $\frac{\psi^\Phi_\delta(x_0)  - \psi(x_0)}{\delta^{2/(n+2)}}=0$.

Suppose next that for all $\delta >0$, $z^\Phi_\delta  \in \text{int}(\operatorname{epi}(\psi))$.  As $\det \left(\nabla^2 \psi(x_0)\right) =0$,  the indicatrix of Dupin at $z_{x_0}$ is an elliptic cylinder 
and we may assume that the first $k$ axes have infinite lengths and the others do not. Then, 
(see e.g., the proof of Lemma 23 in \cite{SchuettWerner2004}),  for all $\varepsilon>0 $ there is an ellipsoid $\mathcal{E}$ and $\Delta_\varepsilon >0$ such that for all $\Delta \leq \Delta_\varepsilon$,
\begin{eqnarray} \label{ellzyl}
\mathcal{E}  \cap H^-(z_{x_0} - \Delta N_\psi(z_{x_0}), N_\psi(z_{x_0}))  \subset \operatorname{epi}(\psi) \cap H^-(z_{x_0} - \Delta N_\psi(z_{x_0}), N_\psi(z_{x_0}))
\end{eqnarray}
and such that the lengths of the $k$ first principal axes of $\mathcal{E}$ are larger than $\frac{1}{\varepsilon}$.  
By Proposition \ref{properties} and continuity of $\Phi$,  there exists a hyperplane $H_\delta$ such that 
\begin{eqnarray*} \label{unten}
\delta &=& \int_{H_\delta^- \cap  \operatorname{epi}(\psi)} \Phi(z) dz  \geq (1-\varepsilon) \Phi(z_{x_0}) 
\vol_{n+1}\left( H_\delta^- \  \cap \   \operatorname{epi}(\psi)  \right).
\end{eqnarray*}
We choose $\delta$ so small  that 
$$
\mathcal{E} \cap H_\delta^- \subset \mathcal{E} \cap H^-(z_{x_0} - \Delta N_\psi(z_{x_0}), N_\psi(z_{x_0})).
$$
Then we have with \eqref{ellzyl}, 
$$
\delta \geq  (1-\varepsilon) \Phi(z_{x_0}) \vol_{n+1}\left( H_\delta^- \  \cap \   \operatorname{epi}(\psi)  \right) \geq (1-\varepsilon) \Phi(z_{x_0})   \vol_{n+1}(\mathcal{E} \cap H_\delta^-) .
$$
Now we continue as in \eqref{unten} and thereafter, and conclude that
\begin{eqnarray*}
\frac{\psi^\Phi_\delta(x_0)  - \psi(x_0) }{\delta^{2/(n+2)}}  &\leq& \frac{(1+ \beta_1\ \varepsilon)^\frac{2n}{n+2} }{(1- \beta_2\ \varepsilon)^{2\frac{n+1}{n+2}}}  \, \frac{c_{n+1}}{\Phi(z_{x_0})^\frac{2}{n+2}} \,  \left(\prod_{i=1}^n \frac{a_i}{\sqrt{a_{n+1}}} \right)^{-\frac{2}{n+1}} \\
&\leq& \frac{(1+ \beta_1\ \varepsilon)^\frac{2n}{n+2} }{(1- \beta_2\ \varepsilon)^{2\frac{n+1}{n+2}}}  \, \frac{c_{n+1}}{\Phi(z_{x_0})^\frac{2}{n+2}} \  \left(\prod_{i=k+1}^n \frac{a_i}{\sqrt{a_{n+1}}} \right)^{-\frac{2}{n+1}} \  \varepsilon^{\frac{2k}{n+2}} ,
\end{eqnarray*}
where in the last inequality we have used 
 that for all $1 \leq i \leq k$, $a_i = \frac{1}{\varepsilon}$. This completes the proof.
\end{proof}

\vskip 4mm
 
We require  a  uniform bound in $\delta$ for the quantities $\frac{\psi^e_\delta(x)- \psi(x)}{\delta^{2/(n+2)}}$ and $\frac{\psi^\Phi_\delta(x)- \psi(x)}{\delta^{2/(n+2)}}$ so that we can eventually apply the dominated convergence theorem.
This is achieved in the next lemma.
 
 \begin{lemma}\label{lemma:interchange0}
 Let  $\psi \in \text{Con}(\mathbb{R}^n)$.  Let $\eta >0$ and 
let $\Phi: \mathbb{R}^{n+1} \to [\eta, \infty)$  be a continuous function.
There exists $\delta_0$ such that for all $\delta < \delta_0$,  for  all $x \in \mathbb R^{n}$, 
\begin{eqnarray*}
0 \leq \frac{\psi^\Phi_\delta(x)- \psi(x)}{\delta^{2/(n+2)}} \leq 2^\frac{3n+4}{n+2}
c_{n+1}  \frac{(1+\|\nabla \psi (x)\|^2)^{\frac{1}{2}}} {\eta^{\frac{2}{n+2}}\,  r_\psi(x) ^\frac{n}{n+2}},
\end{eqnarray*}
where $r_\psi(x)$ is as in \eqref{r(x)}. 
In particular, for  all $\delta < \delta_0$ and for all $x \in  \mathbb R^{n}$, we have that
\begin{eqnarray*}
0 \leq  \frac{ f(x) - f^\Phi_\delta(x) } {\delta^{2/(n+2)}} 
\leq  2^\frac{3n+4}{n+2}
c_{n+1}   \frac{(1+\|\nabla \psi (x)\|^2)^{\frac{1}{2}}} {\eta^{\frac{2}{n+2}}\,  r_\psi(x) ^\frac{n}{n+2}} \, f(x).
\end{eqnarray*}
\end{lemma}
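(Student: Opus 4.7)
The strategy is to repeat the upper-bound portion of the proof of Lemma~\ref{lemma:f-delta}, but replacing the ellipsoidal sandwich $\mathcal{E}(\varepsilon^-)$ used there by the exact Euclidean ball provided by the rolling function. This disposes of the $(1\pm\varepsilon)$–factors and yields a clean non-asymptotic estimate uniform in $\delta$ (and, modulo the caveat discussed below, in $x$).

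Fix $x\in\mathbb{R}^n$. If $r_\psi(x)=0$ the right-hand side is infinite and there is nothing to prove, and if $\psi^\Phi_\delta(x)=\psi(x)$ both sides vanish, so we may assume $r:=r_\psi(x)>0$ and $T:=\psi^\Phi_\delta(x)-\psi(x)>0$. Write $z=(x,\psi(x))$, $N=N_\psi(z)$, $z^\Phi_\delta=z+T e_{n+1}$, and $B=B^{n+1}_2(z-rN,r)$, which by \eqref{rC}--\eqref{r(x)} sits inside $\operatorname{epi}(\psi)$ and touches its boundary at $z$. Choose $\delta_0>0$ so small that $z^\Phi_\delta\in\inte(\operatorname{epi}(\psi))$ whenever $\delta<\delta_0$. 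Then by Proposition~\ref{properties}(i) there is a supporting hyperplane $H$ of $\operatorname{epi}(\psi^\Phi_\delta)$ at $z^\Phi_\delta$ with $\int_{H^-\cap\operatorname{epi}(\psi)}\Phi\,dz=\delta$, and since $\Phi\ge\eta$ and $B\subset\operatorname{epi}(\psi)$,
\[
\vol_{n+1}(H^-\cap B)\ \le\ \vol_{n+1}(H^-\cap\operatorname{epi}(\psi))\ \le\ \delta/\eta.
\]

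The core step is the matching lower bound on $\vol_{n+1}(H^-\cap B)$. Following verbatim the computation that produced \eqref{delta1}--\eqref{below3} in the proof of Lemma~\ref{lemma:f-delta} (but now applied to the exact ball $B$ rather than to $\mathcal{E}(\varepsilon^-)$, so the affine transformation to a ball in that argument is trivial), Lemma~11 of \cite{SchuettWerner1990} gives the cap–volume estimate $\vol_{n+1}(H^-\cap B)\ge c\,r^{n/2}\,d_0^{(n+2)/2}$ with an absolute constant $c$, where $d_0$ denotes the distance from $z^\Phi_\delta$ to $\partial B$; its part~(ii) supplies the two-sided relation
\[
d_0\ \le\ \langle e_{n+1},N\rangle\,T\ \le\ d_0+\frac{2d_0^{2}}{r\,\langle e_{n+1},N\rangle^{2}},
\]
and for $\delta_0$ small the quadratic correction is absorbed into a factor $\tfrac12\langle e_{n+1},N\rangle T$, giving $d_0\ge\tfrac12\langle e_{n+1},N\rangle T$. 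Substituting this into the previous display, replacing $\langle e_{n+1},N\rangle$ by $(1+\|\nabla\psi(x)\|^2)^{-1/2}$ via \eqref{normal}, solving for $T$, and collecting the $2$–powers against the definition \eqref{cn+1} of $c_{n+1}$ delivers exactly
\[
\frac{T}{\delta^{2/(n+2)}}\ \le\ 2^{(3n+4)/(n+2)}\,c_{n+1}\,\frac{(1+\|\nabla\psi(x)\|^2)^{1/2}}{\eta^{2/(n+2)}\,r_\psi(x)^{n/(n+2)}}.
\]

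The inequality for $f$ is immediate from $\psi^\Phi_\delta\ge\psi$ and $1-e^{-t}\le t$ for $t\ge 0$:
\[
0\ \le\ f(x)-f^\Phi_\delta(x)\ =\ e^{-\psi(x)}\bigl(1-e^{-T}\bigr)\ \le\ T\,f(x),
\]
so dividing by $\delta^{2/(n+2)}$ transfers the bound from $T$ to $f-f^\Phi_\delta$. The main obstacle is to ensure that the threshold $\delta_0$ for the absorption of the quadratic correction in Lemma~11(ii) can be chosen \emph{uniformly} in $x$; this works because the correction is of order $T^{2}/(r\langle e_{n+1},N\rangle^{2})$, while the very bound we are proving shows $T$ to be at most of order $\delta^{2/(n+2)}(1+\|\nabla\psi\|^{2})^{1/2}/r^{n/(n+2)}$, so on the set where $r_\psi$ is bounded below and $\|\nabla\psi\|$ bounded above the absorption is uniform, and off that set the right-hand side of the lemma is so large that the claim is vacuous.
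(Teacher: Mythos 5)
Your outline captures the right high-level moves (pass from $\Phi\ge\eta$ to a pure volume bound, compare the cap of $\operatorname{epi}(\psi)$ with the cap of the rolling ball $B=B^{n+1}_2(z-rN,r)$), but the route through Lemma~11 of \cite{SchuettWerner1990} is not the one the paper takes, and as written it has a genuine gap precisely at the point you flag. Lemma~11 is an \emph{asymptotic} device: the factor $\eta(\gamma)$ only tends to $1$ as $\gamma=4\sqrt{2rd_0}\to 0$, and the absorption of the quadratic correction in part~(ii) is valid only once $d_0/r$ is small. For a fixed $x$ this happens as $\delta\to 0$, which is why the paper uses it in Lemma~\ref{lemma:f-delta}; but Lemma~\ref{lemma:interchange0} needs a single $\delta_0$ valid for \emph{every} $x$. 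Your proposed repair is circular: you invoke ``the very bound we are proving'' to control $T/r$, and then claim that ``off that set the right-hand side of the lemma is so large that the claim is vacuous.'' A large but finite right-hand side does not make the inequality vacuous; it still has to be proved, and in exactly the regime where $r_\psi(x)$ is small or $\|\nabla\psi(x)\|$ is large — which occurs on an unbounded set as soon as $e^{-\psi}$ is integrable — your control of the absorption step breaks down. In addition, the specific constant $2^{(3n+4)/(n+2)}$ is not obtained by ``collecting the $2$-powers'' in Lemma~11; it comes from the worst of several cases in a different argument.

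What the paper actually does is replace the asymptotic cap estimate by a completely elementary, non-asymptotic one that holds for all $x$ at once. After the reduction $\delta\ge\eta\,\vol_{n+1}(H_\delta^-\cap B)$, the proof splits into the case $T\ge r\langle e_{n+1},N\rangle$ — where $\vol_{n+1}(H_\delta^-\cap B)$ is bounded below by the volume of an explicit cone of base radius $\tfrac12 T$ and height comparable to $\langle e_{n+1},N\rangle^2 r/2$ — and the complementary case $0<T<r\langle e_{n+1},N\rangle$, where one computes the height $d$ of the minimizing cap directly via the angle $\beta$ between the normal to $H_\delta$ and the segment from $z^\Phi_\delta$ to the centre of $B$, showing $d\ge\tfrac12\langle e_{n+1},N\rangle T$ by a short trigonometric estimate, and then invokes the elementary cap-volume bound of Lemma~\ref{lemma:cap}. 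No $\varepsilon$, no $\eta(\gamma)$, no limit $\delta\to 0$, and hence uniformity in $x$ is automatic. Your closing observation $1-e^{-T}\le T$ for the passage from $\psi$ to $f$ is fine and matches the paper. To make your argument work you would need to replace the appeal to Lemma~11 by this sort of explicit geometric estimate; as it stands, the uniformity claim is not justified.
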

\begin{proof}  
Let $z_x = (x, \psi(x)) \in \partial\left(\operatorname{epi}(\psi)\right)$ and let $z^\Phi_\delta =(x, \psi^\Phi_\delta(x))$.  
Let $r_\psi(x)$  be as in \eqref{r(x)}. If $N_{\psi} (z_x)$ is not unique, then $r_{\psi}(z_x)=0$ and the inequality holds trivially.
 Moreover, if $\psi^\Phi_\delta(x)= \psi(x)$, then $\frac{\psi^\phi_\delta(x)- \psi(x)}{\delta^{2/(n+2)}} =0$ and again, the inequality holds trivially.

Thus we can assume that  $N_{\psi} (z_x)$ is  unique  and $\psi^\phi_\delta(x) > \psi(x)$. By Proposition \ref{properties}, there is a hyperplane $H_\delta$ such that $z^\phi_\delta \in H_\delta$ and
\begin{eqnarray}\label{interchange1}
\delta &=& \int_{H_\delta^- \cap \operatorname{epi}(\psi)} \Phi(z) dz \geq \int_{H_\delta^- \cap B^{n+1}_2 \left( z_x-r_\psi(x) N_\psi(z_x), r_\psi(x) \right)} \Phi(z) dz \nonumber\\
 &\geq & \eta \,  \vol_{n+1}\left(H_\delta^- \cap B^{n+1}_2 \left( z_x-r_\psi(x) N_\psi(z_x), r_\psi(x) \right) \right).
\end{eqnarray}
The proof now continues as the of of Lemma 5 in  \cite{LiSchuettWerner}. We include it for completeness.
We will estimate $ \vol_{n+1}\left(H_\delta^- \cap B^{n+1}_2 \left( z_x-r_\psi(x) N_\psi(z_x), r_\psi(x) \right) \right)$. For this, we choose $\delta_0$ so small that  for all $\delta \leq \delta_0$,  $z_x \in H_\delta^-$.
 
We start by dealing with the case
\begin{equation}\label{assumption1}
\psi^\Phi_\delta(x) - \psi(x)  \geq  r_\psi(x) \   \langle e_{n+1}, N_\psi(z_x) \rangle. 
\end{equation}
In this case  we have for all hyperplanes $H(z^\Phi_\delta)$ through $z^\phi_\delta$  and such that $z_x \in H^-(z^\phi_\delta)$, 
\begin{eqnarray*} 
&& \hskip -20mm \vol_{n+1} \left(H^-(z^\Phi_\delta) \cap B^{n+1}_2 \left(z_x-r_\psi(x) N_\psi(z_x) , r_\psi(x) \right) \right) \geq  \\
&&\hskip 20mm \vol_{n+1} \left(H_0^-(z^\Phi_\delta) \cap B^{n+1}_2 \left(z_x-r_\psi(x) N_\psi(z_x) , r_\psi(x) \right) \right),
\end{eqnarray*} 
where $H_0(z^\Phi_\delta)$ is this hyperplane orthogonal to $x$ and  such that both, $z_x$ and $z^\phi_\delta$ are  in $H_0(z^\Phi_\delta)$.  We can estimate the latter from below by  the cone with base $\frac{1}{2} \left(\psi^\Phi_\delta(x) - \psi(x)\right)  B^n_2$ and height 
$h \geq \left(  \langle e_{n+1}, N_\psi(z_x) \rangle \right)^2 \  \frac{r_\psi(x)} {2}$. 
Hence, by \eqref{assumption1}, 
\begin{align*} 
&\vol_{n+1} \left(H_0^-(z^\Phi_\delta) \cap B^{n+1}_2 \left(z_x-r_\psi(x) N_\psi(z_x) , r_\psi(x) \right) \right)  \\
&\geq\frac{\vol_{n} \left(B^n_2\right) } {2^{n+1} (n+1) } \left(  \langle e_{n+1}, N_\psi(z_x) \rangle \right) ^2 \ r_\psi(x) \  \left(\psi^\Phi_\delta(x) - \psi(x) \right)^n\\
&\geq\frac{\vol_{n} \left(B^n_2\right) } {2^{n+1}  (n+1) } \left(  \langle e_{n+1}, N_\psi(z_x) \rangle \right) ^2 \ r_\psi(x) \ \left(\psi^\Phi_\delta(x) - \psi(x) \right)^\frac{n+2}{2} \left(\psi^\Phi_\delta(x) - \psi(x) \right)^\frac{n-2}{2}   \\
&\geq\frac{\vol_{n} \left(B^n_2\right) } {2^{n+1}  (n+1) } \left(  \langle e_{n+1}, N_\psi(z_x) \rangle \right) ^\frac{n+2}{2}  \ r_\psi(x)^\frac{n}{2} \ \left(\psi^\Phi_\delta(x) - \psi(x) \right)^\frac{n+2}{2}.
\end{align*}
Since  $ \langle e_{n+1}, N_{\psi}(z_{x_0})\rangle= \frac{1}{(1+\|\nabla \psi\|^2)^{\frac{1}{2}}}$,  we get with \eqref{interchange1}, 
\begin{eqnarray*} 
\frac{\psi^\Phi_\delta(x)- \psi(x)}{\delta^{2/(n+2)}} \leq  
\left(\frac{2^{n+1} (n+1)}{\vol_{n} \left(B^n_2\right)}\right)^\frac{2}{n+2} 
\frac{(1+\|\nabla \psi (x)\|^2)^{\frac{1}{2}}} { \eta ^\frac{2}{n+2}\, r_\psi(x) ^\frac{n}{n+2}}.
\end{eqnarray*}
Next, we treat the case
\begin{equation}\label{fall2}
0 < \psi^\Phi_\delta(x)- \psi(x)  < r_\psi(x)\  \langle e_{n+1}, N_{\psi} (z_x) \rangle.
\end{equation}
For all hyperplanes $H(z^\Phi_\delta)$ through $z^\Phi_\delta$ such that $z_x \in H^-(z^\Phi_\delta)$, 
$$ \vol_{n+1} \left(H^-(z^\Phi_\delta) \cap B^{n+1}_2 \left(z_x-r_\psi(x) N_\psi(z_x) , r_\psi(x) \right) \right)$$ 
is minimal if the line segment $[z^\Phi_\delta, z_x-r_\psi(x) N_\psi(z_x)]$ is orthogonal to the hyperplane $H(z^\Phi_\delta)$. 
Then 
$H^-(z^\Phi_\delta) \cap B^{n+1}_2 \left(z_x-r(x) N_\psi(z_x) , r_\psi(x) \right)$ is a cap of $B^{n+1}_2 \left(z_x-r(x) N_\psi(z_x) , r_\psi(x) \right)$ of height $d$, where
$d = \text{dist} \left( z^\Phi_\delta, \partial B^{n+1}_2 (z_x - r_\psi(x) N_{\psi} (z_x), r_\psi(x) \right)$. 
Let $h$ be the height of the cap $H_0^-(z^\Phi_\delta) \cap B^{n+1}_2 \left(z_x-r(x) N_\psi(z_x) , r_\psi(x) \right)$ and let $\beta$ be the angle between the normal to $H_0$ and the line segment 
$[z^\Phi_\delta, z_x-r_\psi(x) N_\psi(z_x)]$. 
If $\beta =0$, then $d=h = \psi^\Phi_\delta(x) -\psi(x)$ and we get, as above,
$$
\delta \geq  \eta  \frac{ \vol_n(B^n_2) }{2^\frac{n}{2}(n+2)}  \   \left(  \psi^\Phi_\delta(x) -\psi(x) \right) ^\frac{n+2}{2} \   r_\psi(x)^\frac{n}{2} 
$$
and thus 
\begin{eqnarray*} 
\frac{\psi^\Phi_\delta(x)- \psi(x)}{\delta^{2/(n+2)}} \leq 2^\frac{n}{n+2}
\left(\frac{ n+2}{\vol_{n} \left(B^n_2\right)}\right)^\frac{2}{n+2} 
\, \eta^{-\frac{2}{n+2}}\,     r_\psi(x) ^{-\frac{n}{n+2}}.
\end{eqnarray*}
Assume now that \( \beta>0\). We first consider the case  $h < r_\psi(x)$. Then
$$
\cos\beta = \frac{r_\psi(x)-h}{r_\psi(x)-d}    \hskip 5mm \text{and} \hskip 5mm \sin \beta = \frac{r_\psi(x) \ \langle e_{n+1}, N_{\psi} (z_x) \rangle - (\psi^\Phi_\delta(x) -\psi(x))}{r_\psi(x)-d}.
$$
From this we get 
\begin{eqnarray*}
d &=& r_\psi(x) - \left( (r_\psi(x)-h)^2 + \left(r_\psi(x) \ \langle e_{n+1}, N_{\psi} (z_x) \rangle - (\psi_\delta(x) -\psi(x))\right)^2\right)^\frac{1}{2}\\
&\geq &  r_\psi(x) \left( 1- \left(1 + \frac{(\psi_\delta(x) -\psi(x))^2}{r_\psi(x)^2}  - 2 \langle e_{n+1}, N_{\psi} (z_x) \rangle \frac{(\psi_\delta(x) -\psi(x))}{r_\psi(x)}  \right)^\frac{1}{2} \right)\\
&\geq &  \langle e_{n+1}, N_{\psi} (z_x) \rangle \left(\psi_\delta(x) -\psi(x)\right) \left(1 -  \frac{ \psi_\delta(x) -\psi(x) }{  2 \  r(x) \   \langle e_{n+1}, N_{\psi} (z_x) \rangle }\right) \\
&\geq&   \frac{1}{2} \langle e_{n+1}, N_{\psi} (z_x) \rangle \left(\psi_\delta(x) -\psi(x)\right).
\end{eqnarray*}
The latter inequality holds as  $\psi^\Phi_\delta(x)- \psi(x)  < r_\psi(x)\  \langle e_{n+1}, N_{\psi} (z_x) \rangle$. 
Thus we get with Lemma \ref{lemma:cap},
\begin{eqnarray*} 
\delta &\geq& \eta \,  \frac{ \vol_n(B^n_2) }{2^\frac{n}{2}(n+2)}  \   d^\frac{n+2}{2} \,  r_\psi(x)^\frac{n}{2} \\
&\geq& \eta \,  \frac{ \vol_n(B^n_2) }{2^{n+1} (n+2) }   \langle e_{n+1}, N_{\psi} (z_x) \rangle^\frac{n+2}{2}\   \left(\psi^\Phi_\delta(x) -\psi(x) \right) ^\frac{n+2}{2}\,    r_\psi(x)^\frac{n}{2}, 
\end{eqnarray*}
which implies that 
\begin{eqnarray*} 
\frac{\psi^\Phi_\delta(x)- \psi(x)}{\delta^{2/(n+2)}} \leq 2^{2   \frac{n+1}{n+2}}
\left(\frac{ n+2}{\vol_{n} \left(B^n_2\right)}\right)^\frac{2}{n+2} 
\   \frac{(1+\|\nabla \psi (x)\|^2)^{\frac{1}{2}}} {\eta^{\frac{2}{n+2}}\,  r_\psi(x) ^\frac{n}{n+2}}.
\end{eqnarray*}
If $h > r_\psi(x)$, then $\sin \beta$ is as above and $\cos\beta = \frac{h-r_\psi(x)}{r_\psi(x)-d}$. 
Continuing from here on as above, completes the proof of the lemma.
\end{proof}
 
We cannot apply the previous lemma to the exponential weight function $\Phi_e$, since this function is not uniformly bounded away from zero. We  prove the corresponding lemma for $\Phi_e$ separately.
We also need  a modified rolling function 
\begin{eqnarray}\label{cases}
		\rho_\psi(x) = 
		 \begin{cases} 
			r_\psi(x),   \hskip 10mm \text{if} \hskip 10mm 0 \leq r_\psi(x)  \leq \psi(x) \nonumber\\
			\psi(x),   \hskip 11mm \text{if}  \hskip 10mm r_\psi(x) > \psi(x) \geq 0 \nonumber \\
	r_\psi(x),   \hskip 10mm \text{if} \hskip 10mm 0 \leq r_\psi(x)  \leq -\psi(x)\nonumber\\
			-\psi(x),   \hskip 9mm \text{if}  \hskip 10mm r_\psi(x) > -\psi(x)>0.\\	
		\end{cases}\\ 
\end{eqnarray}
We recall that the set where $r_\psi(x)=0$ has measure $0$.

 \begin{lemma}\label{lemma:interchange2}
 Let  $\psi \in \text{Con}(\mathbb{R}^n)$.  
There exists $\delta_0$ such that for all $\delta < \delta_0$,  for  all $x \in \mathbb R^{n}$, 
\begin{align*}
0 &\leq \frac{\psi^e_\delta(x)- \psi(x)}{\delta^{2/(n+2)}} \, e^{-\psi(x)} \\
&\leq 2^\frac{3n+4}{n+2}
\, c_{n+1}  
 \begin{cases}  
 \frac{(1+\|\nabla \psi (x)\|^2)^{\frac{1}{2}}} { r_\psi(x) ^\frac{n}{n+2}} \, e^{-\frac{1}{n+2}(n \psi(x)-4 r_\psi(x))},  \hskip 3mm \text{if} \hskip 2mm |\psi(x)| \leq 1 \\
 \frac{(1+\|\nabla \psi (x)\|^2)^{\frac{1}{2}}} { \rho_\psi(x) ^\frac{n}{n+2}} \, e^{-\frac{1}{n+2}(n \psi(x)-4 \rho_\psi(x))},  \hskip 3mm \text{if}  \hskip 2mm |\psi(x)| >1
\end{cases}
\end{align*}
and
\begin{eqnarray*}
0 \leq  \frac{ f(x) - f^e_\delta(x) } {\delta^{2/(n+2)}} 
\leq  2^\frac{3n+4}{n+2}
\, c_{n+1} 
 \begin{cases} 
 \frac{(1+\|\nabla \psi (x)\|^2)^{\frac{1}{2}}} { r_\psi(x) ^\frac{n}{n+2}} \, e^{\frac{-n}{n+2}\left(\psi(x)-\frac{4}{n}r_\psi(x)\right)}, \hskip 3mm \text{if} \hskip 2mm |\psi(x)| \leq 1\\
\frac{(1+\|\nabla \psi (x)\|^2)^{\frac{1}{2}}} { \rho_\psi(x) ^\frac{n}{n+2}} \, e^{\frac{-n}{n+2}\left(\psi(x)-\frac{4}{n}\rho_\psi(x)\right)}, \hskip 3mm \text{if}  \hskip 2mm |\psi(x)| >1.
\end{cases}
\end{eqnarray*}
\end{lemma}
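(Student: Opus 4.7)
The plan is to follow the proof of Lemma~\ref{lemma:interchange0} line by line, replacing the step in which one factors the uniform lower bound $\eta>0$ out of the integral by a pointwise lower bound on $\Phi_e(z)=e^{-s}$ restricted to a suitably chosen Euclidean ball inscribed in $\operatorname{epi}(\psi)$. Two trivial cases are disposed of first: if the outer normal $N_\psi(z_x)$ at $z_x=(x,\psi(x))$ is not unique then $r_\psi(x)=0$ and the inequality holds vacuously; if $\psi^e_\delta(x)=\psi(x)$ the left-hand side vanishes. In the remaining case Proposition~\ref{properties} yields a supporting hyperplane $H_\delta$ to $\operatorname{epi}(\psi^e_\delta)$ at $z^e_\delta=(x,\psi^e_\delta(x))$ with $\delta=\int_{H_\delta^-\cap \operatorname{epi}(\psi)}e^{-s}\,dz$, and for $\delta$ sufficiently small one also has $z_x\in H_\delta^-$.

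Set $\rho:=r_\psi(x)$ if $|\psi(x)|\le 1$, and $\rho:=\rho_\psi(x)=\min\{r_\psi(x),|\psi(x)|\}$ if $|\psi(x)|>1$. In either case the Euclidean ball $B:=B_2^{n+1}(z_x-\rho N_\psi(z_x),\rho)$ is contained in $\operatorname{epi}(\psi)$ by convexity and is tangent to $\partial\operatorname{epi}(\psi)$ at $z_x$. Since by \eqref{normal} the $(n+1)$-coordinate of the centre of $B$ equals $\psi(x)+\rho(1+\|\nabla\psi(x)\|^2)^{-1/2}$, every point $(x',s')\in B$ satisfies $s'\le \psi(x)+2\rho$, whence $e^{-s'}\ge e^{-(\psi(x)+2\rho)}$ on $B$. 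This provides the exact analogue of the key lower estimate used in the proof of Lemma~\ref{lemma:interchange0}, with $\eta$ replaced by $e^{-(\psi(x)+2\rho)}$:
$$\delta\ \ge\ \int_{H_\delta^-\cap B}e^{-s}\,dz\ \ge\ e^{-(\psi(x)+2\rho)}\,\vol_{n+1}(H_\delta^-\cap B).$$

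From this point on the argument proceeds verbatim as in Lemma~\ref{lemma:interchange0}: one splits into sub-cases according to whether $\psi^e_\delta(x)-\psi(x)$ is larger or smaller than $\rho\langle e_{n+1},N_\psi(z_x)\rangle$; in each sub-case one bounds $\vol_{n+1}(H_\delta^-\cap B)$ from below by a cap of $B$ via Lemma~\ref{lemma:cap}; and one uses \eqref{normal} to rewrite $\langle e_{n+1},N_\psi(z_x)\rangle^{-1}$ as $(1+\|\nabla\psi(x)\|^2)^{1/2}$. Taking the $2/(n+2)$-th power and multiplying through by $e^{-\psi(x)}$, the resulting upper bound on $(\psi^e_\delta(x)-\psi(x))\,e^{-\psi(x)}/\delta^{2/(n+2)}$ acquires the factor $e^{\frac{2}{n+2}(\psi(x)+2\rho)}\cdot e^{-\psi(x)}=e^{-\frac{1}{n+2}(n\psi(x)-4\rho)}$, which is precisely the stated bound with $\rho$ equal to $r_\psi(x)$ or $\rho_\psi(x)$ in the two cases. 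The bound for $f(x)-f^e_\delta(x)$ then follows immediately from $f(x)-f^e_\delta(x)=e^{-\psi(x)}\bigl(1-e^{-(\psi^e_\delta(x)-\psi(x))}\bigr)\le e^{-\psi(x)}(\psi^e_\delta(x)-\psi(x))$.

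The only genuinely new ingredient compared with Lemma~\ref{lemma:interchange0}, and the main technical obstacle, is the reason for the case split on $|\psi(x)|$. If one always took $\rho=r_\psi(x)$, the exponent would contain the factor $e^{\frac{4}{n+2}r_\psi(x)}$, which is unbounded whenever $r_\psi(x)\gg|\psi(x)|$, and this would preclude any dominated convergence argument when the bound is used in the proof of Theorem~\ref{theo:f-deltafloat1}. Shrinking the ball to radius $\rho_\psi(x)$ whenever $|\psi(x)|>1$ trades a slightly smaller inscribed ball for an exponent $\tfrac{4}{n+2}\rho_\psi(x)\le\tfrac{4}{n+2}|\psi(x)|$, which can be absorbed together with the $e^{-\frac{n}{n+2}\psi(x)}$ factor; checking that the cap-volume dichotomy of Lemma~\ref{lemma:interchange0} still applies with this smaller inscribed ball, and hence smaller cap height, is the step that must be carried out with a bit of care.
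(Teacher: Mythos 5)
Your proof is correct and follows essentially the same strategy as the paper: one replaces the constant lower bound $\eta$ from Lemma~\ref{lemma:interchange0} by the pointwise bound $\Phi_e\ge e^{-(\psi(x)+2\rho)}$ on the inscribed ball of radius $\rho$, where $\rho$ is $r_\psi(x)$ if $|\psi(x)|\le 1$ and $\rho_\psi(x)$ otherwise, and then runs the cap-volume dichotomy verbatim with that (possibly smaller) ball. Your one-line derivation of the second display from the first via $f(x)-f^e_\delta(x)=e^{-\psi(x)}\bigl(1-e^{-(\psi^e_\delta(x)-\psi(x))}\bigr)\le e^{-\psi(x)}\,(\psi^e_\delta(x)-\psi(x))$ is a small but genuine shortcut over the paper, which simply re-runs the argument for $f-f^e_\delta$.
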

\begin{proof}  
We start out as in the proof of Lemma \ref{lemma:interchange0}. 
Let $z_x = (x, \psi(x)) \in \partial\left(\operatorname{epi}(\psi)\right)$ and  $z^e_\delta =(x, \psi^e_\delta(x))$.
Let first $x$ be such that $|\psi(x)| \leq 1$. Then, as above, 
\begin{eqnarray*}\label{interchange1}
\delta &=& \int_{H_\delta^- \cap \operatorname{epi}(\psi)} \Phi_e(z) dz \geq \int_{H_\delta^- \cap B^{n+1}_2 \left( z_x-r_\psi(x) N_\psi(z_x), r_\psi(x) \right)} \Phi_e(z) dz \nonumber\\
 &\geq & \min_{z \in H_\delta^- \cap B^{n+1}_2 \left( z_x-r_\psi(x) N_\psi(z_x), r_\psi(x) \right)} \Phi_e(z)\,  \vol_{n+1}\left(H_\delta^- \cap B^{n+1}_2 \left( z_x-r_\psi(x) N_\psi(z_x), r_\psi(x) \right) \right).
\end{eqnarray*}
Now we observe that 
\begin{eqnarray*}
\min_{z \in H_\delta^- \cap B^{n+1}_2 \left( z_x-r_\psi(x) N_\psi(z_x), r_\psi(x) \right)} \Phi_e(z) \geq e^{-(\psi(x) + 2 r_\psi(x))}
\end{eqnarray*}
and the  proof then continues as in Lemma \ref{lemma:interchange0}. 
This way, we arrive at 
\begin{eqnarray*} \label{estimate1}
\frac{\psi^e_\delta(x)- \psi(x)}{\delta^{2/(n+2)}} &\leq& 2^{2   \frac{n+1}{n+2}}
\left(\frac{ n+2}{\vol_{n} \left(B^n_2\right)}\right)^\frac{2}{n+2} 
\,   \frac{(1+\|\nabla \psi (x)\|^2)^{\frac{1}{2}}} {r_\psi(x) ^\frac{n}{n+2}}  e^{\frac{2}{n+2} (\psi(x) + 2 r_\psi(x))} \nonumber\\
&=& 2^\frac{3n+4}{n+2} c_{n+1} \, \frac{(1+\|\nabla \psi (x)\|^2)^{\frac{1}{2}}} { r_\psi(x) ^\frac{n}{n+2}}  e^{\frac{2}{n+2} (\psi(x) + 2 r_\psi(x))}
\end{eqnarray*}
and thus 
\begin{eqnarray*}
&&\frac{\psi^e_\delta(x)- \psi(x)}{\delta^{2/(n+2)}} \, e^{-\psi(x)}
\leq
2^\frac{3n+4}{n+2} c_{n+1} \, \frac{(1+\|\nabla \psi (x)\|^2)^{\frac{1}{2}}} { r_\psi(x) ^\frac{n}{n+2}}  e^{\frac{-1}{n+2} (n \psi(x) - 4 r_\psi(x))}.
\end{eqnarray*}
The term $ \frac{ f(x) - f^e_\delta(x) } {\delta^{2/(n+2)}}$ can be handled in the same way.
 
Let now $x$ be such that $|\psi(x)| > 1$. Then we use $\rho_\psi(x)$ and get that 
 \begin{eqnarray*}\label{interchange1}
\delta &=& \int_{H_\delta^- \cap \operatorname{epi}(\psi)} \Phi_e(z) dz \geq \int_{H_\delta^- \cap B^{n+1}_2 \left( z_x-\rho_\psi(x) N_\psi(z_x), \rho_\psi(x) \right)} \Phi_e(z) dz \nonumber\\
 &\geq & \min_{z \in H_\delta^- \cap B^{n+1}_2 \left( z_x-\rho_\psi(x) N_\psi(z_x), \rho_\psi(x) \right)} \Phi_e(z)\,  \vol_{n+1}\left(H_\delta^- \cap B^{n+1}_2 \left( z_x-\rho_\psi(x) N_\psi(z_x), \rho_\psi(x) \right) \right)
\end{eqnarray*}
and
\begin{eqnarray*}
\min_{z \in H_\delta^- \cap B^{n+1}_2 \left( z_x-\rho_\psi(x) N_\psi(z_x), \rho_\psi(x) \right)} \Phi_e(z) \geq e^{-(\psi(x) + 2 \rho_\psi(x))}.
\end{eqnarray*}
Again, the  proof then continues as in Lemma \ref{lemma:interchange0} and we obtain
\begin{eqnarray} \label{estimate1}
\frac{\psi^e_\delta(x)- \psi(x)}{\delta^{2/(n+2)}} &\leq& 2^{2   \frac{n+1}{n+2}}
\left(\frac{ n+2}{\vol_{n} \left(B^n_2\right)}\right)^\frac{2}{n+2} 
\,   \frac{(1+\|\nabla \psi (x)\|^2)^{\frac{1}{2}}} { \rho_\psi(x) ^\frac{n}{n+2}}  e^{\frac{2}{n+2} (\psi(x) + 2 \rho_\psi(x))} \nonumber\\
&=&2^\frac{3n+4}{n+2} c_{n+1} \, \frac{(1+\|\nabla \psi (x)\|^2)^{\frac{1}{2}}} { \rho_\psi(x) ^\frac{n}{n+2}}  e^{\frac{2}{n+2} (\psi(x) + 2 \rho_\psi(x))}.
\end{eqnarray}
Again, the term $ \frac{ f(x) - f^e_\delta(x) } {\delta^{2/(n+2)}}$ can be dealt with in the same way.
Thus,
\begin{eqnarray*}
&&\frac{\psi^e_\delta(x)- \psi(x)}{\delta^{2/(n+2)}} \, e^{-\psi(x)}
\leq
2^\frac{3n+4}{n+2} c_{n+1} \, \frac{(1+\|\nabla \psi (x)\|^2)^{\frac{1}{2}}} { \rho_\psi(x) ^\frac{n}{n+2}}  e^{\frac{-1}{n+2} (n \psi(x) - 4 \rho_\psi(x))}
\end{eqnarray*}
and similarly for $ \frac{ f(x) - f^e_\delta(x) } {\delta^{2/(n+2)}}$.
\end{proof}

We also need the following lemma which was proved in \cite{LiSchuettWerner}.
 
\begin{lemma}\label{integral1}
 Let  $\psi: \R^n \rightarrow \R $ be a  convex function such that  
$ e^{-\psi}$ is integrable. Then we have for all $0 \leq \alpha < 1$, 
\begin{equation}\label{integral-1}
\int _{\mathbb R^{n}} \frac{(1+\|\nabla \psi (x)\|^2)^{\frac{1}{2}}} { r_\psi(x) ^\alpha} \  e^{-\psi(x)} \   dx  < \infty.
\end{equation}
In particular, this holds for $\alpha = \frac{n}{n+2}$. 
\end{lemma}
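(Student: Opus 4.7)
The plan is to pass from the $\R^n$-integral to a surface integral over $\partial\operatorname{epi}(\psi)$ and then, by truncating the epigraph at successively higher heights, reduce the problem to the classical finiteness of $\int_{\partial K}r_K(z)^{-\alpha}\,d\mu_K(z)$ for a convex body $K$ and $\alpha<1$.

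First, I would apply the change of variables $x\mapsto z=(x,\psi(x))\in\partial\operatorname{epi}(\psi)$, under which $(1+\|\nabla\psi(x)\|^2)^{1/2}\,dx=d\mu_{\operatorname{epi}(\psi)}(z)$ and $e^{-\psi(x)}=e^{-z_{n+1}}$. This rewrites the left-hand side of \eqref{integral-1} as
$$
\int_{\partial\operatorname{epi}(\psi)} r_{\operatorname{epi}(\psi)}(z)^{-\alpha}\,e^{-z_{n+1}}\,d\mu_{\operatorname{epi}(\psi)}(z).
$$

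Second, I would decompose $\partial\operatorname{epi}(\psi)$ into the horizontal slabs $B_j=\{z\in\partial\operatorname{epi}(\psi):j-1\le z_{n+1}<j\}$ for integers $j\ge\lfloor\inf\psi\rfloor$, and for each such $j$ introduce the truncated convex body $K_j=\operatorname{epi}(\psi)\cap\{z_{n+1}\le j\}$. Since $K_j\subseteq\operatorname{epi}(\psi)$, every ball witnessing $r_{K_j}(z)$ at a point $z\in B_j\subset\partial K_j$ is also admissible for $r_{\operatorname{epi}(\psi)}(z)$, hence $r_{\operatorname{epi}(\psi)}(z)\ge r_{K_j}(z)$ on $B_j$. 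Combining this with the elementary bound $e^{-z_{n+1}}\le e^{-(j-1)}$ on $B_j$ yields
$$
\int_{\partial\operatorname{epi}(\psi)} r_{\operatorname{epi}(\psi)}(z)^{-\alpha}\,e^{-z_{n+1}}\,d\mu_{\operatorname{epi}(\psi)}(z) \;\le\; e\sum_{j} e^{-j}\int_{\partial K_j} r_{K_j}(z)^{-\alpha}\,d\mu_{K_j}(z).
$$

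Third, I would invoke the classical convex-body estimate, essentially due to Sch\"utt--Werner \cite{SchuettWerner1990}, stating that for every convex body $K\subset\R^{n+1}$ and every $\alpha\in[0,1)$ the integral $\int_{\partial K}r_K^{-\alpha}\,d\mu_K$ is finite, with a quantitative bound that grows at most polynomially in $\operatorname{diam}(K)$. The hypothesis $\int_{\R^n}e^{-\psi}<\infty$ forces the recession function of $\psi$ to be strictly positive on $S^{n-1}$, so by compactness $\psi(x)\ge a\|x\|-b$ for some $a>0$ and $b\in\R$; in particular $\operatorname{diam}(K_j)$ grows at most linearly in $j$, every summand in the displayed series is bounded by $e^{-j}\,\mathrm{poly}(j)$, and summability gives the conclusion.

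The main obstacle will be the quantitative convex-body bound in Step 3 combined with the convex-analytic growth lower bound $\psi(x)\ge a\|x\|-b$: both ingredients are classical, but their combination has to be calibrated carefully so that the polynomial prefactor in $j$ is overwhelmed by the exponential decay $e^{-j}$, and one must use precisely the restriction $\alpha<1$ to obtain finiteness of the convex-body integral in the first place.
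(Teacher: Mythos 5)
The paper does not prove this lemma itself but cites \cite{LiSchuettWerner}, so a direct comparison with the paper's argument is not possible; your argument, however, has a genuine gap in Step~3. Steps~1 and~2 are sound: the change of variables to $\partial\operatorname{epi}(\psi)$, the slab decomposition with $K_j=\operatorname{epi}(\psi)\cap\{z_{n+1}\le j\}$, the monotonicity $r_{\operatorname{epi}(\psi)}(z)\ge r_{K_j}(z)$ on $B_j$, and the observation that integrability of $e^{-\psi}$ forces $\psi(x)\ge a\|x\|-b$ with $a>0$ are all correct.

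The problem is the claim in Step~3 that, for every convex body $K\subset\R^{n+1}$ and $\alpha\in[0,1)$, the integral $\int_{\partial K}r_K^{-\alpha}\,d\mu_K$ is bounded by a polynomial in $\operatorname{diam}(K)$. This is false: for the thin slab $K_\varepsilon=[-1,1]^n\times[-\varepsilon,\varepsilon]\subset\R^{n+1}$ the diameter stays bounded as $\varepsilon\to 0$, yet $r_{K_\varepsilon}=\varepsilon$ on a set of surface measure close to $2^{n+1}$ (most of the two large faces), whence $\int_{\partial K_\varepsilon}r_{K_\varepsilon}^{-\alpha}\,d\mu_{K_\varepsilon}\to\infty$. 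What is true for each fixed convex body is finiteness of $\int_{\partial K}r_K^{-\alpha}\,d\mu_K$ for $\alpha<1$, but the implicit constant depends on the geometry of $K$ beyond its diameter. Consequently Step~3 does not deliver the polynomial-in-$j$ bound required for summability of $\sum_j e^{-j}\int_{\partial K_j}r_{K_j}^{-\alpha}\,d\mu_{K_j}$. Fixing this would require a bound tailored to the specific truncated epigraphs $K_j$, using information such as their inradius, the angle along the rim $\{z_{n+1}=j\}\cap\partial\operatorname{epi}(\psi)$, and the contribution of the flat cap $\{z_{n+1}=j\}\cap\operatorname{epi}(\psi)$; this is not supplied, and one cannot simply rescale to a normalized body of unit diameter since the $K_j$ may become increasingly eccentric as $j\to\infty$ (for $\psi(x)=\|x\|^2$ the horizontal extent of $K_j$ is of order $\sqrt{j}$ while the vertical extent is of order $j$).
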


\begin{remark}
If we analyze the proof of the lemma given in \cite{LiSchuettWerner}, we see  that in fact we have 
\begin{equation}\label{integral-1}
	\int _{\mathbb R^{n}} \frac{(1+\|\nabla \psi (x)\|^2)^{\frac{1}{2}}} { r_\psi(x) ^\alpha} \  e^{- \beta \psi(x)} \   dx  < \infty
\end{equation}
for all $0\leq \alpha <1$ and $ \beta>0$.
\end{remark}

For the proof of Theorem \ref{theo:f-deltafloat1} and Proposition \ref{corollary2}, we need a refinement of the previous lemma which we present next.
 
\begin{lemma}\label{integral2}
 Let  $\psi: \R^n \rightarrow \R $ be a  convex function such that  
$ e^{-\psi}$ is integrable. Then we have for all $0 \leq \alpha < 1$, 
 \begin{equation*}\label{integral-1}
\int _{\{x: |\psi(x) |> 1\}} \frac{(1+\|\nabla \psi (x)\|^2)^{\frac{1}{2}}} { \rho_\psi(x) ^\alpha} \,   e^{\frac{-1}{n+2}\left(n \psi(x)- 4 \rho_\psi(x)\right)} \   dx  < \infty
\end{equation*}
and
\begin{equation*}\label{integral-1}
\int _{\{x: |\psi(x)| \leq 1\}} \frac{(1+\|\nabla \psi (x)\|^2)^{\frac{1}{2}}} { r_\psi(x) ^\alpha} \,   e^{\frac{-1}{n+2}\left(n \psi(x)- 4 r_\psi(x)\right)} \   dx  < \infty.
\end{equation*}
In particular, this  holds for $\alpha = \frac{n}{n+2}$. 
\end{lemma}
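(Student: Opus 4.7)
The plan is to reduce both integrals to applications of Lemma \ref{integral1} in the strengthened form given by the Remark immediately following it. The key input is that integrability of $e^{-\psi}$ forces the rolling function to be uniformly bounded: there exists $R=R(\psi)<\infty$ with $r_\psi(x)\leq R$ for every $x$, and hence $\rho_\psi(x)=\min(r_\psi(x),|\psi(x)|)\leq R$ as well.

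For the second integral, over $\{|\psi|\leq 1\}$, both $|\psi|$ and $r_\psi$ are bounded on the region of integration, so the exponential factor $e^{-(n\psi-4r_\psi)/(n+2)}$ is bounded by a constant. It therefore suffices to verify that $\int_{\{|\psi|\leq 1\}}(1+\|\nabla\psi\|^2)^{1/2}\,r_\psi^{-\alpha}\,dx<\infty$. The Remark after Lemma \ref{integral1}, applied with any fixed $\beta>0$, gives $\int_{\mathbb{R}^n}(1+\|\nabla\psi\|^2)^{1/2}\,r_\psi^{-\alpha}\,e^{-\beta\psi}\,dx<\infty$, and since $e^{-\beta\psi}\geq e^{-\beta}$ on $\{|\psi|\leq 1\}$, this bound transfers to the unweighted integral on that set.

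For the first integral, over $\{|\psi|>1\}$, the factor $e^{4\rho_\psi/(n+2)}\leq e^{4R/(n+2)}$ is bounded, so the integrand is controlled by a constant multiple of $(1+\|\nabla\psi\|^2)^{1/2}\,\rho_\psi^{-\alpha}\,e^{-n\psi/(n+2)}$. To handle the denominator $\rho_\psi^{-\alpha}$ I would use the elementary bound
$$\frac{1}{\rho_\psi^\alpha}=\max\!\left(\frac{1}{r_\psi^\alpha},\,\frac{1}{|\psi|^\alpha}\right)\leq \frac{1}{r_\psi^\alpha}+\frac{1}{|\psi|^\alpha},$$
and split the integral into two pieces. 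The $r_\psi^{-\alpha}$ piece is finite by the Remark with $\beta=n/(n+2)$. For the $|\psi|^{-\alpha}$ piece, the restriction $|\psi|>1$ yields $|\psi|^{-\alpha}\leq 1$, so it is dominated by $\int_{\{|\psi|>1\}}(1+\|\nabla\psi\|^2)^{1/2}\,e^{-n\psi/(n+2)}\,dx$, which is finite by the Remark applied with $\alpha=0$ and $\beta=n/(n+2)$.

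The main conceptual obstacle is controlling the singular factor $\rho_\psi^{-\alpha}$, which can blow up both where $r_\psi$ is small and where $\psi$ is close to zero. Restricting to $\{|\psi|>1\}$ removes the second source of blow-up, while the splitting above converts the first into the already-established integral bound of Lemma \ref{integral1}. The role of introducing $\rho_\psi$ in place of $r_\psi$ outside $\{|\psi|\leq 1\}$ is precisely to damp the exponential factor $e^{4r_\psi/(n+2)}$ that could otherwise become troublesome where $r_\psi$ is large compared to $|\psi|$.
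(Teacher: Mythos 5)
Your proof is correct, and it takes a genuinely cleaner route than the paper's. Both proofs ultimately reduce to Lemma \ref{integral1} and its Remark, but the crucial simplifying observation you use is that integrability of $e^{-\psi}$ makes the rolling function \emph{uniformly} bounded, $r_\psi(x)\leq R$, and hence $\rho_\psi(x)\leq r_\psi(x)\leq R$ as well. This lets you absorb $e^{4\rho_\psi/(n+2)}$ (respectively $e^{4r_\psi/(n+2)}$) into a single constant across both regions. The paper instead handles the first integral by splitting according to the four cases in \eqref{cases} and using the pointwise bound $\rho_\psi\leq|\psi|$, which pushes the factor $e^{4\rho_\psi/(n+2)}$ into the main exponential and produces the effective exponents $e^{-\frac{n-4}{n+2}\psi}$ on $\{\psi>1\}$ and $e^{-\frac{n+4}{n+2}\psi}$ on $\{\psi<-1\}$ — this requires tracking sign cases and, for small $n$, yields a nonpositive exponent $\frac{n-4}{n+2}$ that falls outside the scope $\beta>0$ of the Remark as stated. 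For the second integral the paper invokes compactness of $\{|\psi|\leq 1\}$ and upper semicontinuity of $r_\psi$ to bound $e^{4r_\psi/(n+2)}$; your global bound makes that machinery unnecessary. Your elementary splitting $\rho_\psi^{-\alpha}\leq r_\psi^{-\alpha}+|\psi|^{-\alpha}$ with $|\psi|^{-\alpha}\leq 1$ on $\{|\psi|>1\}$ then replaces the paper's four-way case analysis and lands squarely in the Remark with $\beta=n/(n+2)>0$ in every instance. In short: same key lemma, but a more uniform and arguably more robust reduction.
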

\begin{proof}  
We start by writing	
\begin{eqnarray*}
&&\hskip -15mm \int _{\{x: |\psi(x) |> 1\}} \frac{(1+\|\nabla \psi (x)\|^2)^{\frac{1}{2}}} { \rho_\psi(x) ^\alpha} \,   e^{\frac{-1}{n+2}\left(n \psi(x)- 4 \rho_\psi(x)\right)} \,   dx  \\
&& = \int _{\{x:\,  \psi>1 \, \text{and}\,  0 < r_\psi \leq \psi\} } \frac{(1+\|\nabla \psi (x)\|^2)^{\frac{1}{2}}} { \rho_\psi(x) ^\alpha} \,   e^{\frac{-1}{n+2}\left(n \psi(x)- 4 \rho_\psi(x)\right)} \,  dx  \\
&& + \int _{ \{x:\, \psi >1 \, \text{and}\, 1< \psi < r_\psi\}} \frac{(1+\|\nabla \psi (x)\|^2)^{\frac{1}{2}}} { \rho_\psi(x) ^\alpha} \,   e^{\frac{-1}{n+2}\left(n \psi(x)- 4 \rho_\psi(x)\right)} \,   dx\\
&& + \int _{\{x:\,  \psi < -1 \, \text{and}\,  0 < r_\psi \leq -\psi\} } \frac{(1+\|\nabla \psi (x)\|^2)^{\frac{1}{2}}} { \rho_\psi(x) ^\alpha} \,   e^{\frac{-1}{n+2}\left(n \psi(x)- 4 \rho_\psi(x)\right)} \,  dx  \\
&& + \int _{ \{x:\, \psi <-1 \, \text{and}\, 1 < -\psi < r_\psi\}} \frac{(1+\|\nabla \psi (x)\|^2)^{\frac{1}{2}}} { \rho_\psi(x) ^\alpha} \,   e^{\frac{-1}{n+2}\left(n \psi(x)- 4 \rho_\psi(x)\right)} \,   dx.
\end{eqnarray*}
Now, we estimate  these integrals further according to the different cases indicated in \eqref{cases} and get 
 \begin{align*}
&\int _{\{x: |\psi(x) |> 1\}} \frac{(1+\|\nabla \psi (x)\|^2)^{\frac{1}{2}}} { \rho_\psi(x) ^\alpha} \,   e^{\frac{-1}{n+2}\left(n \psi(x)- 4 \rho_\psi(x)\right)} \,   dx   \\ 
&\leq\int _{\{x:\,  \psi>1 \, \text{and}\,  0 < r_\psi \leq \psi\} } \frac{(1+\|\nabla \psi (x)\|^2)^{\frac{1}{2}}} { r_\psi(x) ^\alpha} \,  e^{-\frac{n-4}{n+2} \psi(x)}\, dx	   \\
&\qquad+ 	\int _{ \{x:\, \psi >1 \, \text{and}\, 1< \psi < r_\psi\}}		 \frac{(1+\|\nabla \psi (x)\|^2)^{\frac{1}{2}}} { \psi(x) ^\alpha} \,  e^{-\frac{n-4}{n+2} \psi(x)} \, dx\\
&\qquad +  \int _{\{x:\,  \psi <-1 \, \text{and}\,  0 < r_\psi \leq -\psi\} } \frac{(1+\|\nabla \psi (x)\|^2)^{\frac{1}{2}}} { r_\psi(x) ^\alpha} \,  e^{-\frac{n+4}{n+2} \psi(x)} \, dx\\
&\qquad + \int _{ \{x:\, \psi <-1 \, \text{and}\, 1 <  -\psi < r_\psi\}}  \frac{(1+\|\nabla \psi (x)\|^2)^{\frac{1}{2}}} { (-\psi(x)) ^\alpha} \,  e^{-\frac{n+4}{n+2} \psi(x)} \, dx\\
& \leq  \int _{\mathbb R^{n}} \frac{(1+\|\nabla \psi (x)\|^2)^{\frac{1}{2}}} { r_\psi(x) ^\alpha} \,  e^{-\frac{n-4}{n+2} \psi(x)}  \, dx + \int _{\mathbb R^{n}} \frac{(1+\|\nabla \psi (x)\|^2)^{\frac{1}{2}}} { r_\psi(x) ^\alpha} \,  e^{-\frac{n+4}{n+2} \psi(x)}  \, dx\\
&\qquad + \int _{ \mathbb{R}^n} (1+\|\nabla \psi (x)\|^2)^{\frac{1}{2}} \,  e^{-\frac{n-4}{n+2} \psi(x)}  \, dx \, +\,  \int _{ \mathbb{R}^n} (1+\|\nabla \psi (x)\|^2)^{\frac{1}{2}}  \,  e^{-\frac{n+4}{n+2} \psi(x)}  \, dx.
\end{align*}
The first two integrals in the last inequality are finite by Lemma \ref{integral1} and the subsequent remark.
It was proved in Lemma 3 of  \cite{LiSchuettWerner} that the last  two integrals are finite as well.

Now we consider the second integral
\begin{equation*}\label{integral-1}
\int _{\{x: |\psi(x)| \leq 1\}} \frac{(1+\|\nabla \psi (x)\|^2)^{\frac{1}{2}}} { r_\psi(x) ^\alpha} \,   e^{\frac{-1}{n+2}\left(n \psi(x)- 4 r_\psi(x)\right)} \   dx.
\end{equation*}
The set $\{x: |\psi(x)| \leq 1\}$ is compact. Indeed, since $\psi$ is continuous it is closed and since $\int_{\mathbb{R}^n} e^{-\psi(x)} dx < \infty$, the set is also bounded.
It is shown in \cite{SchuettWernerBook} that the rolling function $r_\psi$ is upper  semicontinuous.  Therefore the upper semicontinuous function $e^{\frac{4}{n+2}  r_\psi(x)}$ attains its maximum on $\{x: |\psi(x)| \leq 1\}$. Thus,
\begin{eqnarray*}
&&\int _{\{x: |\psi(x)| \leq 1\}} \frac{(1+\|\nabla \psi (x)\|^2)^{\frac{1}{2}}} { r_\psi(x) ^\alpha} \,   e^{\frac{-1}{n+2}\left(n \psi(x)- 4 r_\psi(x)\right)} \   dx \\
&\leq &\max_{\{x: |\psi(x)| \leq 1\}}  e^{\frac{4}{n+2}  r_\psi(x)} \int _{\{x: |\psi(x)| \leq 1\}} \frac{(1+\|\nabla \psi (x)\|^2)^{\frac{1}{2}}} { r_\psi(x) ^\alpha} \,   e^{\frac{-n}{n+2} \psi(x)} \,  dx \\
&\leq &\max_{\{x: |\psi(x)| \leq 1\}}  e^{\frac{4}{n+2}  r_\psi(x)} \int _{\mathbb{R}^n} \frac{(1+\|\nabla \psi (x)\|^2)^{\frac{1}{2}}} { r_\psi(x) ^\alpha} \,   e^{\frac{-n}{n+2} \psi(x)}\, dx,
\end{eqnarray*}
and the latter integral is finite by Lemma \ref{integral1} and again the remark made thereafter.
\end{proof}

Theorem  \ref{theo:f-deltafloat2} and Theorem  \ref{theo:f-deltafloat1} as well as Proposition \ref{corollary1}  and Proposition \ref{corollary2} follow immediately from the lemmas just presented.

\begin{proof}[Proof of Theorem \ref{theo:f-deltafloat2} and Theorem \ref{theo:f-deltafloat1}]
By the assumptions of the theorems, Lemmas \ref{lemma:interchange0} respectively \ref{lemma:interchange2},  Lemmas \ref{integral1}  respectively \ref{integral2},  and the dominated convergence theorem we get  
\begin{eqnarray*}
\lim _{\delta \rightarrow 0} \frac{ \int_{\R^n} (f (x) - f^\Phi_\delta(x) )  dx} {\delta^{2/(n+2)}} =
 \int_{\R^n} \lim _{\delta \rightarrow 0} \frac{(f(x) - f^\Phi_\delta(x) ) \ dx} {\delta^{2/(n+2)}} .
\end{eqnarray*}
and
\begin{eqnarray*}
\lim _{\delta \rightarrow 0} \frac{ \int _{\mathbb R^{n}}(f(x)  - f_{\delta}^{e}(x) )  \  dx } {\delta^{2/(n+2)}}  =  \int_{\R^n} \lim _{\delta \rightarrow 0} \frac{(f(x) - f^e_\delta(x) ) \ dx} {\delta^{2/(n+2)}}.
\end{eqnarray*}
Finally, Lemma \ref{lemma:f-delta} finishes the proof.
\end{proof}

\begin{proof}[Proof of Proposition  \ref{corollary1} and Proposition \ref{corollary2}]
The proofs are done in the same way as for the theorems.
\end{proof}
 
\subsection{Proof of Theorem \ref{thm:sconcave}}

Recall the definition of the convex body $K_f^s$ and the fact that
$$
\vol_{n+s}(K_f^s)=\int_{\R^n}\vol_s(B_2^s) \bigl(f^{1/s}(x)\bigr)^s\, dx=\vol_s(B_2^s)\int_{\R^n}f(x)\, dx.
$$
Then we can write
\[
\int_{\R^n} f(x)-f^\Phi_\delta(x)\, dx=\frac{\vol_{n+s}(K_f^s)-\vol_{n+s}(K_f^s(\Phi,\delta))}{\vol_s(B_2^s)}.
\]
Now we apply Theorem 5 of \cite{Werner2002} for the volume difference involving the weighted floating body.  It follows that
\begin{align*}
&\lim_{\delta\to 0} \frac{\vol(K_f^s)-\vol(K_f^s(\Phi,\delta))}{\delta^{2/(n+s+1)}}\\
&={1\over 2}\Bigl( \frac{n+s+1}{(n+s)\vol_{n+s}(B_2^{n+s})}\Bigr)^{\frac{2}{n+s+1}}\int_{\partial K_f^s}\kappa(z)^{{1\over n+s+1}}\Phi(z)^{-\frac{2}{n+s+1}}\,\mu_{K_f^s}(dz).
\end{align*}
For the next lemma recall the definition of \(\mathrm{as}^s_\Phi(f)\) from \eqref{def:assphi}.
\begin{lemma}
	One has that
	\[
		\int_{\partial K_f^s}\kappa(z)^{{1\over n+s+1}}\Phi(z)^{-\frac{2}{n+s+1}}\,\mu_{K_f^s}(dz)=s\,\vol_s(B_2^s)\,\mathrm{as}^s_\Phi(f)
	\]
\end{lemma}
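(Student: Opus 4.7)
The approach is to parametrize the boundary of $K_f^s$ as a hypersurface of revolution and push everything down to an integral over $\R^n$. Write $g\coloneqq f^{1/s}$, which is concave on its support and hence twice differentiable a.e.\ by Alexandrov's theorem. The non-negligible part of $\partial K_f^s$ is parametrized by
\[
\Psi:\mathrm{int}(\mathrm{supp}(f))\times S^{s-1}\to\partial K_f^s,\qquad \Psi(x,\omega)=(x,g(x)\omega).
\]
The tangent vectors in the $x$-directions are $T_i=(e_i,\partial_ig\cdot\omega)$ and in the sphere directions are $S_j=(0,g\,v_j)$ with $v_j\perp\omega$. A direct computation shows that the metric is block diagonal, with blocks $I+\nabla g(\nabla g)^T$ and $g^2 I_{s-1}$, and hence the surface element is
\[
d\mu_{K_f^s}=g(x)^{s-1}\sqrt{1+\|\nabla g(x)\|^2}\,dx\,d\sigma_{S^{s-1}}(\omega).
\]

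The main step is to compute the Gaussian curvature $\kappa$ at $\Psi(x,\omega)$. The outward unit normal is $N=(-\nabla g,\omega)/\sqrt{1+\|\nabla g\|^2}$, and since $\langle v_j,\omega\rangle=0$ one checks that the second fundamental form inherits the same block structure as the metric, with no cross terms. In the meridian block, $II(T_i,T_k)$ produces $\partial_i\partial_kg/\sqrt{1+\|\nabla g\|^2}$; in the parallel block, the standard sphere computation $\partial_{\xi_j}\partial_{\xi_l}\omega=-\delta_{jl}\omega+(\text{tangent})$ yields $II(S_j,S_l)=-g\,\delta_{jl}/\sqrt{1+\|\nabla g\|^2}$. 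Inverting the metric block by block and taking determinants of both $n\times n$ and $(s-1)\times(s-1)$ shape-operator blocks, one obtains
\[
\kappa=\frac{|\det\nabla^2 g(x)|}{g(x)^{s-1}\bigl(1+\|\nabla g(x)\|^2\bigr)^{(n+s+1)/2}}.
\]

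Once the curvature formula is in hand, the rest is bookkeeping. Since $\Phi(x,y)=\phi(x,\|y\|)$ and $\|y\|=g(x)$ on $\partial K_f^s$, we have $\Phi(\Psi(x,\omega))=\phi(x,g(x))$, which is independent of $\omega$, so the $\omega$-integral produces $\vol_{s-1}(S^{s-1})=s\,\vol_s(B_2^s)$. Substituting:
\begin{align*}
&\int_{\partial K_f^s}\kappa^{1/(n+s+1)}\Phi^{-2/(n+s+1)}\,d\mu_{K_f^s}\\
&\quad=s\,\vol_s(B_2^s)\int_{\mathrm{supp}(f)}\frac{|\det\nabla^2 g(x)|^{1/(n+s+1)}}{g(x)^{(s-1)/(n+s+1)}(1+\|\nabla g\|^2)^{1/2}}\,g(x)^{s-1}\sqrt{1+\|\nabla g\|^2}\,\phi(x,g(x))^{-2/(n+s+1)}\,dx.
\end{align*}
The factors $(1+\|\nabla g\|^2)^{\pm 1/2}$ cancel, and the powers of $g$ combine to $g^{(s-1)(n+s)/(n+s+1)}=f^{(s-1)(n+s)/(s(n+s+1))}$. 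Recalling that $g=f^{1/s}$, the resulting integrand is exactly the one in the definition \eqref{def:assphi} of $\operatorname{as}^s_\Phi(f)$, which proves the identity.

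The main obstacle is the curvature calculation. The block-diagonality of the metric and the second fundamental form is what makes it tractable; without exploiting the revolution symmetry one would have to handle an $(n+s-1)\times(n+s-1)$ shape operator directly. One should also note that the set where $g(x)=0$ corresponds to the ``equator'' of $K_f^s$, which is $\mu_{K_f^s}$-null in the measure above because of the factor $g^{s-1}$ (when $s\ge 2$), so the parametrization $\Psi$ captures $\mu_{K_f^s}$-almost all of $\partial K_f^s$, and the case $s=1$ can be handled by the same formula since then the body reduces to an epigraph-type construction over the graph $\pm g(x)$ and the two copies of the meridional contribution add up to the same $s\,\vol_s(B_2^s)=2$ factor.
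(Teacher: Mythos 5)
Your proof is correct, and it takes a genuinely different route from the paper. The paper does \emph{not} compute the curvature of $\partial K_f^s$ directly; it cites \cite[Lemma 8]{AAKSW} for the formula $\kappa^{1/(n+s+1)} = |\det\nabla^2 g|^{1/(n+s+1)}\,g^{-(s-1)/(n+s+1)}\,(1+\|\nabla g\|^2)^{-1/2}$ with $g=f^{1/s}$, and then parametrizes the boundary as a graph over $(x,y_1,\ldots,y_{s-1})$ with $y_s=\pm\sqrt{g(x)^2-\sum_{i<s}y_i^2}$, picking up a factor of $2$ and a fiber integral $\int_{\R^{s-1}} dy/|y_s|$ that is evaluated through a beta-function identity $(s-1)\vol_{s-1}(B_2^{s-1})B(\tfrac{s-1}{2},\tfrac12)=s\,\vol_s(B_2^s)$. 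You instead parametrize $\partial K_f^s$ as a hypersurface of revolution $(x,\omega)\mapsto(x,g(x)\omega)$, exploit the block-diagonality of the first and second fundamental forms (which you verify via the vanishing of the cross terms $\langle T_i,S_j\rangle$ and $\langle\partial_{\xi_j}T_i,N\rangle$), and derive the same curvature formula $|\kappa|=|\det\nabla^2 g|\,g^{-(s-1)}(1+\|\nabla g\|^2)^{-(n+s+1)/2}$ together with the surface element $g^{s-1}\sqrt{1+\|\nabla g\|^2}\,dx\,d\sigma_{S^{s-1}}$ from scratch. The upshot is that your fiber integral is trivially $\vol_{s-1}(S^{s-1})=s\vol_s(B_2^s)$, avoiding the beta-function manipulation entirely, at the cost of re-deriving the curvature instead of citing it; your remark on the $s=1$ degenerate case (where $S^0$ reduces the fiber integral to the factor $2$) and on the $\mu_{K_f^s}$-negligibility of the equator $\{g=0\}$ for $s\ge 2$ are both accurate and are implicitly contained in the paper's restriction to $\tilde\partial K_f^s$.
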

\begin{proof}
	We have
	\[
		\int_{\partial K_f^s}\kappa(z)^{1\over n+s+1}\Phi(z)^{-{2\over n+s+1}}\,\mu_{K_f^s}(d z)
		=\int_{\tilde\partial K_f^s}\kappa(z)^{1\over n+s+1}\Phi(z)^{-{2\over n+s+1}}\,\mu_{K_f^s}(d z),
	\]
	where $\tilde\partial K_f^s=\{(x,y)\in\partial K_f^s:x\in{\rm int}({\rm supp}(f))\}$ is the set of points on the boundary whose projection onto the $\R^n$-coordinate belongs to the interior of the support of $f$. Now, applying \cite[Lemma 8]{AAKSW} and writing $z=(z_1,\ldots,z_n,z_{n+1},\ldots,z_{n+s})\in\R^{n+s}$, we arrive at
	\begin{align*}
		\int_{\tilde\partial K_f^s}&\kappa(z)^{1\over n+s+1}\Phi(z)^{-{2\over n+s+1}}\,\mu_{K_f^s}(d z)\\
		&= \int_{\tilde\partial K_f^s}{|\det\nabla^2 f(z_1,\ldots,z_n)^{1/s}|^{1\over n+s+1}\over\sqrt{1+\|\nabla f(z_1,\ldots,z_n)^{1/s}\|^2}} f(z_1,\ldots,z_n)^{-{s-1\over s(n+s+1)}}\Phi(z)^{-{2\over n+s+1}}\,\mu_{K_f^s}(d z)\\
		&= 2\int_{\R^n}\int_{\R^{s-1}}f(x)^{1\over s}\Big({|\det\nabla^2f(x)^{1/s}|\over f(x)^{s-1\over s}}\Big)^{1\over n+s+1}\phi(x,f(x)^{1/s})^{-{2\over n+s+1}}{d y\over|y_s|}d x,
	\end{align*}
	where $y=(y_1,\ldots,y_{s-1})$ and $y_s:=\sqrt{f(x)^{2/s}-\sum_{i=1}^{s-1}y_i^2}$. The integral with respect to $y$ is the same as in \cite{AAKSW} and equals
	\[
	\int_{\R^{s-1}}{d y\over|y_s|} = {1\over 2}{f(x)^{s-1\over s}\over f(x)^{1\over s}}(s-1)\vol_{s-1}(B_2^{s-1})B\Big({s-1\over 2},{1\over 2}\Big)
	\]
	with Euler's beta function $B(\,\cdot\,,\,\cdot\,)$. As a consequence,
	\begin{align*}
		&\int_{\partial K_f^s}\kappa(z)^{1\over n+s+1}\Phi(z)^{-{2\over n+s+1}}\,\mu_{K_f^s}(d z)\\
		&= (s-1)\vol_{s-1}(B_2^{s-1})B\Bigl({s-1\over 2},{1\over 2}\Bigr)\\
		&\qquad\qquad\times\int_{\R^n}|\det \nabla^2f(x)^{1/s}|^{{1\over n+s+1}}f(x)^{\frac{(s-1)(n+s)}{s(n+s+1)}}\psi(x,f^{1/s}(x))^{-\frac{2}{n+s+1}}\,d x.
	\end{align*}
	The result now follows from the fact that
	\begin{align*}
		(s-1)\vol_{s-1}(B_2^{s-1})B\Big({s-1\over 2},{1\over 2}\Big) &= (s-1){\pi^{s-1\over 2}\over\Gamma({s-1\over 2}+1)}{\Gamma({s-1\over 2})\Gamma({1\over 2})\over\Gamma({s\over 2})}\\
		&= (s-1){\pi^{s-1\over 2}\over {s-1\over 2}\Gamma({s-1\over 2})}{\Gamma({s-1\over 2})\sqrt{\pi}\over\Gamma({s\over 2})} = {2\pi^{s/2}\over\Gamma({s\over 2})}\\
		& = s\,\vol_s(B_2^s).
	\end{align*}
	The proof is thus complete.
\end{proof}

We are now prepared for the proof of Theorem \ref{thm:sconcave}.

\begin{proof}
We have
\[
\begin{split}
	&\lim_{\delta\to 0} \frac{\int_{\R^n} f(x)-f^\Phi_\delta(x)\,d x}{\delta^{2/(n+s+1)}} \\
	&={1\over 2\vol_s(B_2^s)}\Bigl( \frac{n+s+1}{(n+s)\vol_s(B_2^{n+s})}\Bigr)^{\frac{2}{n+s+1}}  s\,\vol_s(B_2^s)\,\mathrm{as}^s_\Phi(f)\\
	&={s\,\vol_s(B_2^s)\over 2\vol_s(B_2^s)}\Bigl( \frac{n+s+1}{(n+s)\vol_{n+s}(B_2^{n+s})}\Bigr)^{\frac{2}{n+s+1}}\\
	&\qquad\qquad\times\int_{\R^n}|\det \nabla^2f(x)^{1/s}|^{{1\over n+s+1}}f(x)^{\frac{(s-1)(n+s)}{s(n+s+1)}}\phi(x,f^{1/s}(x))^{-\frac{2}{n+s+1}}\,d x\\
	&={s\over 2}\Bigl( \frac{n+s+1}{(n+s)\vol_{n+s}(B_2^{n+s})}\Bigr)^{\frac{2}{n+s+1}}\\
	&\qquad\qquad\times\int_{\R^n}|\det \nabla^2f(x)^{1/s}|^{{1\over n+s+1}}f(x)^{\frac{(s-1)(n+s)}{s(n+s+1)}}\phi(x,f^{1/s}(x))^{-\frac{2}{n+s+1}}\,d x\\
	& =c_{n,s}\int_{\R^n}|\det \nabla^2f(x)^{1/s}|^{{1\over n+s+1}}f(x)^{\frac{(s-1)(n+s)}{s(n+s+1)}}\phi(x,f^{1/s}(x))^{-\frac{2}{n+s+1}}\,d x.
\end{split}
\]
This completes the proof of Theorem \ref{thm:sconcave}. 
\end{proof}

\subsection*{Acknowledgement}
CT has  been supported by the DFG priority program SPP 2265 \textit{Random Geometric Systems}. NT has been supported by the INdAM-GNAMPA Project CUP E55F22000270001. EMW is supported by NSF grant  \textit{DMS-2103482} and by the \textit{Caroline Herschel Guest Professorship} of the Ruhr University Bochum.

 \addcontentsline{toc}{section}{References}

{}

\vskip 4mm
 
Carsten Sch\"utt\\
{\small Mathematisches Seminar }\\
{\small Christian-Albrechts Universit\"at}\\
{\small 24098 Kiel, Germany }\\
{\small \tt schuett@math.uni-kiel.de}
\vskip 3mm
 
Christoph Th\"ale\\
\small{Faculty of Mathematics}\\
{\small Ruhr University Bochum}\\ 
{\small Bochum, Germany}\\ 
{\small \tt christoph.thaele@rub.de}
\vskip 3mm

Nicola Turchi\\
\small{Department of Mathematics and Applications}\\
{\small University of Milano-Bicocca}\\ 
{\small 20125 Milano, Italy}\\ 
{\small \tt nicola.turchi@unimib.it}
\vskip 3mm
 
Elisabeth M. Werner\\
{\small Department of Mathematics \ \ \ \ \ \ \ \ \ \ \ \ \ \ \ \ \ \ \ Universit\'{e} de Lille 1}\\
{\small Case Western Reserve University \ \ \ \ \ \ \ \ \ \ \ \ \ UFR de Math\'{e}matique }\\
{\small Cleveland, Ohio 44106, U. S. A. \ \ \ \ \ \ \ \ \ \ \ \ \ \ \ 59655 Villeneuve d'Ascq, France}\\
{\small \tt elisabeth.werner@case.edu}\\ \\

\end{document}